\newtheorem{theorem}{Theorem}[section]
\newtheorem{lemma}[theorem]{Lemma}
\newtheorem{proposition}[theorem]{Proposition}
\newtheorem{corollary}[theorem]{Corollary}
\newtheorem{heuristic}[theorem]{Heuristic}
\newtheorem{fheuristic}[theorem]{Na\"{\i}ve Heuristic}
\theoremstyle{definition}
\newtheorem{remark}[theorem]{Remark}
\newtheorem{notation}[theorem]{Notation}
\newtheorem{definition}[theorem]{Definition}
\newtheorem{example}[theorem]{Example}
\numberwithin{equation}{section}
\DeclareMathOperator{\Aut}{Aut}
\DeclareMathOperator{\End}{End}
\DeclareMathOperator{\Gal}{Gal}
\DeclareMathOperator{\Jac}{Jac}
\global\let\ker\undefined  \DeclareMathOperator{\ker}{Ker}
\DeclareMathOperator{\Prym}{Prym}
\DeclareMathOperator{\trace}{trace}
\newcommand \bbA {{\mathbb A}}
\newcommand \EE {{\mathbb E}}
\newcommand \FF {{\mathbb F}}
\newcommand \Fq {\FF_q}
\newcommand \PP {{\mathbb P}^1}
\newcommand \QQ {{\mathbb Q}}
\newcommand \ZZ {{\mathbb Z}}
\newcommand \CA {{\mathcal A}}
\newcommand \CX {{\mathcal X}}
\newcommand \CZ {{\mathcal Z}}
\newcommand{\frakp}{{\mathfrak p}}
\newcommand{\alphabar}{\overline{\alpha}}
\newcommand{\betabar}{\overline{\beta}}
\newcommand{\Kbar}{\bar{K}}
\newcommand{\CZbar}{\CZ\llap{$\overline{\phantom{\mathrm Z}}$}} 
\newcommand{\Ctilde}{\tilde{C}}  
\newcommand{\Ztilde}{\tilde{Z}}  
\newcommand{\Zhat}{\hat{Z}}  
\newcommand{\pz}{\phantom{0}}   
\newcommand{\mybar}[1]{
  \mathchoice
  {#1\llap{$\overline{\phantom{\displaystyle\mathrm#1}}$}}
  {#1\llap{$\overline{\phantom{\textstyle\mathrm#1}}$}}
  {#1\llap{$\overline{\phantom{\scriptstyle\mathrm#1}}$}}
  {#1\llap{$\overline{\phantom{\scriptscriptstyle\mathrm#1}}$}}
}  
\renewcommand{\bar}{\mybar}
\renewcommand{\tilde}{\widetilde}
\renewcommand{\hat}{\widehat}
\title{Doubly isogenous genus-2 curves with $D_4$-action}
\date{26 January 2023}
\author[Arul]{Vishal Arul}
\address[Arul]{MIT Department of Mathematics, 
         77 Massachusetts Ave., Bldg. 2-239A, 
         Cambridge, MA 02139, USA}
\email{varul.math@gmail.com}
\author[Booher]{Jeremy Booher}
\address[Booher]{School of Mathematics and Statistics, 
         University of Canterbury, Private Bag 4800,
         Christchurch 8140, New Zealand}
\email{jeremy.booher@canterbury.ac.nz}
\author[Groen]{Steven R. Groen}
\address[Groen]{Department of Mathematics, 
         University of Warwick, Zeeman Building,
         Coventry, CV4 7AL, UK}
\email{steven.groen@warwick.ac.uk}
\author[Howe]{Everett W. Howe}
\address[Howe]{Unaffiliated mathematician, 
         San Diego, CA 92104, USA}
\email{however@alumni.caltech.edu}
\author[Li]{Wanlin Li}
\address[Li]{Centre de recherches math\'ematiques,
         Universit\'e de Montr\'eal, 2920 Chemin de la tour,
         Montr\'eal (Qu\'ebec) H3T 1J4, Canada}
\email{liwanlin@crm.umontreal.ca}
\author[Matei]{Vlad Matei}
\address[Matei]{Raymond and Beverly Sackler School of Mathematical Sciences, 
         Tel Aviv University, 
         Tel Aviv 69978, Israel}
\email{vladmatei@mail.tau.ac.il}
\author[Pries]{Rachel Pries}
\address[Pries]{Department of Mathematics, 
         Colorado State University, 
         Fort Collins, CO 80523, USA}
\email{pries@math.colostate.edu}
\author[Springer]{Caleb Springer}
\address[Springer]{Department of Mathematics, 
         The Pennsylvania State University, 
         University Park, PA 16802, USA}
\email{cks5320@psu.edu}
\begin{document}

\begin{abstract}
We study the extent to which curves over finite fields are characterized by 
their zeta functions and the zeta functions of certain of their covers. Suppose
$C$ and $C'$ are curves over a finite field $K$, with $K$-rational base points 
$P$ and $P'$, and let $D$ and $D'$ be the pullbacks (via the Abel--Jacobi map) 
of the multiplication-by-$2$ maps on their Jacobians. We say that $(C,P)$ and 
$(C',P')$ are \emph{doubly isogenous} if $\Jac(C)$ and $\Jac(C')$ are isogenous
over $K$ and $\Jac(D)$ and $\Jac(D')$ are isogenous over $K$. For curves of 
genus $2$ whose automorphism groups contain the dihedral group of order eight, 
we show that the number of pairs of doubly isogenous curves is larger than 
na\"{\i}ve heuristics predict, and we provide an explanation for this 
phenomenon.
\end{abstract}

\keywords{Curve, Jacobian, finite field, zeta function, isogeny, 
          unramified cover, arithmetic statistics}

\subjclass[2020]{Primary 11G20, 11M38, 14H40, 14K02, 14Q05;
                 Secondary 11G10, 11Y40, 14H25, 14H30, 14Q25}

\maketitle

\section{Introduction}

The isogeny class of the Jacobian of a (smooth, projective, geometrically
connected) curve over a field $K$ is an invariant of the curve, and it is 
natural to wonder whether this invariant is strong enough to always distinguish
two curves from one another. The answer is no --- distinct isogenous elliptic
curves provide abundant counterexamples.  Even when we restrict attention to 
curves of larger genus, the answer remains no, as over finite fields curves that
are Galois conjugates of one another have the same zeta function (which in this 
case characterizes the isogeny class of the Jacobian). Furthermore, 
Smith~\cite{Smith2011} showed that even in characteristic $0$ there exist 
non-isomorphic curves $C$ and $C'$ of arbitrarily large genus with $\Jac(C)$ 
isogenous to $\Jac(C')$. Later, Mestre~\cite{Mestre2009, Mestre2013} proved that
such pairs of curves exist for \emph{every} genus in characteristic~$0$; in
particular, for every $g \ge 1$ there is a family of dimension $g+1$ of pairs of
hyperelliptic curves of genus $g$ with a $2$-power isogeny between their
Jacobians. Thus the isogeny class of the Jacobian is not an invariant that can
always distinguish two curves from one another.

The question becomes more interesting when we restrict to low-dimensional 
families of curves. One motivation is a connection with deterministic algorithms
for factoring polynomials over finite fields. Suppose there is an open subset
$U$ of $\bbA^1_\ZZ$ and an abelian scheme $\CA$ over $U$ such that for all 
sufficiently large primes~$p$, the zeta functions of the specialization of $\CA$
to the elements of $U(\FF_p)$ are distinct. Poonen~\cite{Poonen2019}, extending 
earlier work of Kayal, showed that if such a scheme $\CA$ exists, then there is
a deterministic algorithm that, given a finite field $\FF_q$ and a polynomial
$f\in \FF_q[t]$, will produce the irreducible factors of $f$ in time polynomial
in $\log q$ and $\deg f$.

Motivated by this observation, Sutherland and Voloch~\cite{SutherlandVoloch2019}
considered curves over finite fields, and asked whether curves could be 
distinguished from one another (up to Galois conjugacy) by their zeta functions
together with the zeta functions of certain of their covers. If so, then the
Jacobians of these curves and their covers could be used in Poonen's argument. 
One of the families of covers they studied was obtained by considering maximal 
unramified abelian $2$-extensions of curves, as will now describe.

Let $(C,P)$ be a \emph{pointed} curve over a field $K$, that is, a curve over 
$K$ provided with a $K$-rational point. Let $\Ctilde\to C$ be the pullback of 
the multiplication-by-$2$ map on $\Jac(C)$ via the embedding $C\to\Jac(C)$ that
sends $P$ to the identity. We say that two curves $C$ and $C'$ over $K$ are
\emph{isogenous} if their Jacobians are isogenous over $K$, and we say that two 
pointed curves $(C,P)$ and $(C',P')$ are \emph{doubly isogenous} if $C$ and $C'$
are isogenous and $\Ctilde$ and $\Ctilde'$ are isogenous. (Similar definitions
can be made using pullbacks of other isogenies of the Jacobian; following 
Sutherland and Voloch, we focus here on the multiplication-by-$2$ map because it
is perhaps the simplest nontrivial choice.)

As we noted above, if $K$ is a finite field then $C$ and $C'$ are isogenous if 
and only if they have the same zeta function over $K$. One can use arithmetic 
statistics to develop heuristics for the number of pairs of pointed curves 
$(C,P)$ and $(C',P')$ that are isogenous or doubly isogenous. The hope is to
identify families of curves such that no two members are expected to be doubly
isogenous, and then prove that expectation and obtain a deterministic factoring
algorithm. As a first step, we would like gather data to test whether our
heuristics are reasonable, and to that end we study families for which the
heuristics suggest that there \emph{do} exist doubly isogenous pairs.

In as-yet-unpublished work, Howe, Sutherland, and Voloch studied genus-$2$ 
curves having an automorphism of order $3$; the full automorphism group of these
curves contains the dihedral group of order $12$. They gave a heuristic for the
number of such curves over finite fields of characteristic not $2$ or $3$ that 
are doubly isogenous. Their data showed that the number of such pairs was larger
than expected. This over-abundance was explained by the existence of a pair of
doubly isogenous pointed curves over the number field $\QQ(\sqrt{29})$; for 
every prime $\frakp$ of this number field, the reductions of these curves modulo
$\frakp$ gives a pair of doubly isogenous pointed curves having an automorphism
of order~$3$.

We explore another instance of this problem, working over a field $K$ of
characteristic not $2$ containing a primitive $4$th root of unity. We consider
curves of genus $2$ with an automorphism $\rho$ of order~$4$. The automorphism
groups of these curves contain the dihedral group of order~$8$. We study their
elementary abelian $2$-group covers, in some cases restricting to the situation 
where the Weierstrass points of the genus-$2$ curves are $K$-rational. An
imprecise summary of our main results is that, taking $K = \Fq$ for a prime
$q\equiv 1\bmod 4$:
\begin{enumerate}
\item The number of pairs of such curves over $\Fq$ whose Jacobians are
      isogenous over $\Fq$ grows as expected; 
      see Theorem~\ref{T:justthecurves} and Table~\ref{Table:JustCurves}.
\item The number of pairs of such curves that are ``$[1-\rho^*]$-isogenous'' 
      over $\Fq$ grows as expected;
      see Example~\ref{ex:heurzeta} and Table~\ref{Table:TwoTables}(B).
      (The terminology is explained in Example~\ref{ex:heurzeta}, but roughly
      speaking, the definition of being $[1-\rho^*]$-isogenous is the same as 
      that of being doubly isogenous, except the multiplication-by-$2$ 
      map on $\Jac(C)$ is replaced by a degree-$4$ endomorphism 
      of~$\Jac(C)$.)
\item The number of pairs of such curves that are doubly isogenous over $\Fq$
      is larger than expected;
      see Lemma~\ref{lem:ev1} and Table~\ref{Table:TwoTables}(A).  
      We explain this discrepancy in Section~\ref{sec:unexpected} by finding
      unexpected relationships between the Prym varieties of certain covers.
\end{enumerate}

We remark that this family of curves is Moonen's fourth special family
\cite{Moonen2010}. It would be interesting to study isogenies between curves in
the other special families of Moonen.

\subsection*{Conventions}
A \emph{curve} over a field $K$ is a smooth projective geometrically connected
$1$-dimensional variety over $K$. If $C$ is a curve over a field $K$, then 
$\Aut(C)$ is the group of $K$-rational automorphisms of $C$; if $L$ is an 
extension field of $K$, then $\Aut_L(C)$ is the group of $L$-rational 
automorphisms of $C$.

\section{The family of genus-\texorpdfstring{$2$}{2} curves with 
         \texorpdfstring{$D_4$}{D4}-action}
\label{sec:D4curves}

Let $K$ be a field of characteristic not~$2$. In this section, we find an
equation that describes every genus-$2$ curve over $K$ whose automorphism group
contains the dihedral group $D_4$ of order~$8$. We also show that the Jacobian
of a genus-$2$ curve with $D_4$ contained in its automorphism group is isogenous
to the square of an elliptic curve.

We fix a presentation of the dihedral group of order $8$:
\[D_4 =  \langle a, b \, | \, a^2 = b^2 = (ab)^4 = 1  \rangle.\]
We let $\xi$ denote the automorphism of $D_4$ that interchanges $a$ and $b$.

\begin{notation}\label{N:ZZ}
A \emph{curve with $D_4$-action} is a curve $Z$ together with an embedding
$\epsilon\colon D_4 \hookrightarrow \Aut(Z)$. We say that two curves with 
$D_4$-action $(Z,\epsilon)$ and $(Z',\epsilon')$ are \emph{isomorphic} if there
is a $K$-rational isomorphism $\varphi\colon Z \to Z'$ such that the following
diagram commutes:
\[
\xymatrix{
& D_4\ar[ld]_\epsilon\ar[rd]^{\epsilon'} \\
\Aut(Z)\ar[rr]_{\delta\mapsto \varphi\circ\delta\circ\varphi^{-1} } && \Aut(Z')
}
\]
Let $\CZ$ denote the set of $K$-isomorphism classes of genus-$2$ curves with
$D_4$-action over $K$. Using Igusa's classification~\cite[\S8]{Igusa1960} of the
automorphism groups of genus-$2$ curves, we check that if $(Z,\epsilon)$ is a 
genus-$2$ curve with $D_4$-action, then $\epsilon((ab)^2)$ is the hyperelliptic
involution.
\end{notation}

\begin{remark}
\label{rem:conjugation}
If $(Z,\epsilon)$ is a curve with $D_4$-action and $\alpha$ is an automorphism
of $Z$, we let $\epsilon^\alpha$ denote the inclusion
$D_4\hookrightarrow\Aut(Z)$ that sends $x$ to $\alpha \epsilon(x) \alpha^{-1}$.
Note that $\alpha\colon Z\to Z$ then gives a morphism of pairs
$(Z,\epsilon)\to(Z,\epsilon^\alpha)$, which shows that conjugating $\epsilon$ by
an automorphism of $Z$ does not change the isomorphism class of the pair
$(Z,\epsilon)$.
\end{remark}

\subsection{A family of genus-\texorpdfstring{$2$}{2} curves with 
            \texorpdfstring{$D_4$}{D4}-action}
\label{ssec:automorphisms}

Let $c$ and $s$ be elements of $K$ with $c\ne 0$ and $s\ne\pm 2$, and let $Z$ be
the genus-$2$ curve
\begin{equation} \label{EZstwist}
Z\colon\quad y^2 = c (x^2 + 1) (x^4 + s x^2 + 1).
\end{equation}
The hyperelliptic involution $\kappa$ of $Z$ is given by $(x,y)\mapsto(x,-y)$.
The curve $Z$ also has other $K$-rational involutions, including 
\[
    \sigma \colon (x,y)\mapsto (-x,y)      \text{\qquad and \qquad}
    \tau   \colon (x,y)\mapsto (1/x,y/x^3).
\]
Let $\rho = \sigma\tau$, so that $\rho$ takes $(x,y)$ to $(-1/x,y/x^3)$. We note
that $\rho^2 = \kappa$. The group $G$ generated by $\sigma$ and $\tau$ is
isomorphic to $D_4$. More precisely, we specify an inclusion
\begin{align}
\label{EZstwisteps}
\epsilon\colon D_4 &\hookrightarrow \Aut(Z)\\
\notag
a &\mapsto \sigma\\
\notag
b &\mapsto \tau.
\end{align}
Thus, Equations~\eqref{EZstwist} and~\eqref{EZstwisteps} give us a family of
genus-$2$ curves with $D_4$-action.

\begin{remark} \label{remark:igusa}
When $s \in \{-6, -1, 14\}$, the curve \eqref{EZstwist} has geometric
automorphism group strictly larger than $D_4$; using Igusa's
classification~\cite[\S8]{Igusa1960}, we can show that all other values of $s$
give curves with geometric automorphism group isomorphic to $D_4$.
\end{remark}

\subsection{Classifying genus-\texorpdfstring{$2$}{2} curves with 
            \texorpdfstring{$D_4$}{D4}-action up to isomorphism} 
\label{sec:classifying}

\begin{lemma}
\label{L:moduli}
Let $(Z',\epsilon')$ be a genus-$2$ curve over $K$ with $D_4$-action. Then there
are values $c,s \in K$ such that the curve with $D_4$-action $(Z,\epsilon)$
given by~\eqref{EZstwist} and~\eqref{EZstwisteps} is isomorphic to
$(Z',\epsilon')$. The value of $s$ is unique, and the value of $c$ is unique up
to multiplication by elements of $K^{\times 2}$.
\end{lemma}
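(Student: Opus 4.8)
The plan is to use the structure theory already set up: a genus-$2$ curve with $D_4$-action has $\epsilon((ab)^2)$ equal to the hyperelliptic involution $\kappa$, so the quotient $Z'/\kappa$ is $\PP$, and the images of the other involutions descend to automorphisms of $\PP$. Concretely, write $Z'$ as a hyperelliptic curve $y^2 = f(x)$ over $K$ and let $\bar{a}$ and $\bar{b}$ be the automorphisms of $\PP = Z'/\langle\kappa\rangle$ induced by $\epsilon(a)$ and $\epsilon(b)$. Since $a$ and $b$ are noncentral involutions in $D_4$ whose product has order $4$, $\bar a$ and $\bar b$ are involutions of $\PP$ generating a dihedral subgroup of $\mathrm{PGL}_2$ of order $4$, i.e. a Klein four-group whose nontrivial elements each have two fixed points on $\PP$. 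The first step is to put this Klein four-group into a normal form: after a $K$-rational change of coordinates on $\PP$ we may assume $\bar a\colon x\mapsto -x$ and $\bar b\colon x\mapsto 1/x$ (here is where we will need to check that the relevant fixed points, or the coordinate change realizing this, are $K$-rational and not just $\Kbar$-rational — this uses that $Z'$ has genus $2$ and the involutions are defined over $K$, together with a short argument that a $K$-rational Klein four-subgroup of $\mathrm{PGL}_2(K)$ with this fixed-point structure is $K$-conjugate to the standard one).

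Once the coordinates are normalized, the branch locus of $Z'\to\PP$ is a $K$-rational effective divisor of degree $6$ on $\PP$ that is stable under the group $\langle x\mapsto -x,\ x\mapsto 1/x\rangle$ and does not meet the fixed points of these maps (otherwise $Z'$ would have a genus-$0$ or genus-$1$ quotient forcing the wrong automorphism group, or the cover would be ramified incorrectly — we must also rule out branch points at $0$ and $\infty$ and at $\pm 1$ and $\pm i$ to land in the stated parameter range $c\ne 0$, $s\ne\pm 2$). The polynomial cutting out this divisor, being invariant under $x\mapsto -x$ and (projectively) under $x\mapsto 1/x$, must be a scalar multiple of $(x^2+1)(x^4+sx^2+1)$ for a unique $s\in K$: invariance under $x\mapsto -x$ makes it a polynomial in $x^2$, i.e.\ of the form $x^6 + u x^4 + v x^2 + w$ up to scaling, and the palindromic condition from $x\mapsto 1/x$ forces $w=1$ (after scaling) and $u = v$, which after renaming gives exactly the shape in~\eqref{EZstwist}. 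The constant $c$ is then determined by $f$ up to the usual hyperelliptic ambiguity $y\mapsto \lambda y$, i.e.\ up to $K^{\times 2}$; this gives both the existence of $(c,s)$ and the asserted uniqueness of $s$ and of $c$ modulo squares.

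Finally I would check that the isomorphism of hyperelliptic curves we have constructed is actually an isomorphism of curves with $D_4$-action in the sense of Notation~\ref{N:ZZ}, i.e.\ that it intertwines $\epsilon'$ with the specific $\epsilon$ of~\eqref{EZstwisteps} sending $a\mapsto\sigma$ and $b\mapsto\tau$, not merely some automorphism of $D_4$ applied to it. This is where Remark~\ref{rem:conjugation} does the work: any two embeddings of $D_4$ into $\Aut(Z)$ with the same image differ by conjugation by an automorphism of $Z$ together with an automorphism of $D_4$, and since $\Aut(D_4)$ is generated by the inner automorphisms and $\xi$ (which swaps $a$ and $b$), we only need to absorb a possible swap of $\bar a$ and $\bar b$ — but swapping $x\mapsto -x$ with $x\mapsto 1/x$ is realized by a coordinate change on $\PP$, so it is already accounted for in the normalization step. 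The main obstacle I anticipate is the rationality bookkeeping in the normal-form step: ensuring that the coordinate change on $\PP$ bringing the Klein four-group to standard position can be taken over $K$ (rather than over a quadratic extension), and simultaneously that it can be chosen to avoid the excluded values of $s$, will require the most care; everything after that is the routine invariant-theory computation for binary sextics sketched above.
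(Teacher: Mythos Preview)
Your outline is the same strategy as the paper's proof: descend the $D_4$-action to a Klein four-group on $\PP = Z'/\langle\kappa\rangle$, normalize coordinates so that $\bar a$ and $\bar b$ become $x\mapsto -x$ and $x\mapsto 1/x$, and read off the palindromic even sextic. The structure is right, but two points need correction.

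First, the step you flag as ``the main obstacle'' really is the entire content of the lemma, and your placeholder (``a $K$-rational Klein four-subgroup of $\mathrm{PGL}_2(K)$ with this fixed-point structure is $K$-conjugate to the standard one'') is false as stated: a $K$-rational involution of $\PP$ need not have $K$-rational fixed points, and you need exactly that for both $\bar a$ and $\bar b$. The paper's argument is not group-theoretic but geometric. The involution $\alpha=\epsilon'(a)$ on $Z'$ has exactly two geometric fixed points (Riemann--Hurwitz, since the quotient has genus~$1$); if $P$ is one, then $\iota P$ is the other, and $P\neq\iota P$ because the stabilizer of $P$ in the tame $V_4$-group $\langle\alpha,\iota\rangle$ must be cyclic. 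Hence both fixed points of $\alpha$ map to a \emph{single} point on $\PP$, which is therefore $K$-rational; since $\bar a$ is a $K$-rational involution with one $K$-rational fixed point, both of its fixed points are $K$-rational. The identical argument applied to $\beta$ shows the fixed points of $\bar b$ are $K$-rational, i.e.\ after sending the fixed points of $\bar a$ to $0,\infty$ the map $\bar b$ is $x\mapsto d/x$ with $d\in K^{\times 2}$. This is the step you are missing, and nothing softer will do.

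Second, a small error: the branch locus does \emph{not} avoid the fixed points of the whole Klein four-group. The fixed points of $\bar a\bar b\colon x\mapsto -1/x$ are $\pm\zeta$, and these are always Weierstrass points of $Z$ (they are the roots of the factor $x^2+1$). What the smoothness condition $s\neq\pm 2$ encodes is only that the branch locus avoids $0,\infty$ (fixed points of $\bar a$) and $\pm 1$ (fixed points of $\bar b$) and is reduced. This does not affect your invariant-theory computation --- any even palindromic sextic $x^6+ux^4+ux^2+1$ automatically vanishes at $\pm\zeta$ and factors as $(x^2+1)(x^4+(u-1)x^2+1)$ --- but your stated reason is wrong. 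Finally, your closing paragraph about reconciling the $D_4$-embeddings via $\xi$ is unnecessary: if you normalize $\bar a$ first and then $\bar b$ (rather than the Klein four-group as an unlabeled group), the model you obtain already sends $\alpha$ to $\sigma$ (or $\sigma\kappa$) and $\beta$ to $\tau$ (or $\tau\kappa$), and the residual $\kappa$-ambiguity is absorbed by an inner automorphism of $D_4$ via Remark~\ref{rem:conjugation}.
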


\begin{proof}
Let $\alpha = \epsilon'(a)$, let $\beta = \epsilon'(b)$, and let $\iota$ be the
hyperelliptic involution on $Z'$. The quotient of $Z'$ by $\alpha$ has genus~$1$
because $\alpha\neq\iota$. The Riemann--Hurwitz formula shows that $\alpha$ has
two geometric fixed points. If $P$ is one of these fixed points, then 
$\iota P = \iota\alpha P = \alpha \iota P$, so $\iota P$ is fixed by $\alpha$ as
well. 

We claim that $P \neq \iota P$. To see this, consider the $V_4$-subgroup 
$H = \langle \alpha, \iota \rangle$. The stabilizer of any point under $H$ is
the decomposition group of the corresponding place in the geometric cover
$Z' \to Z'/H$. This decomposition group is cyclic since the extension is tamely
ramified. Hence, no fixed point of $\alpha$ is fixed by $\iota$. 

Consider the quotient $\PP = Z'/\langle\iota\rangle$. The two fixed points of
$\alpha$ are $P$ and $\iota P$. These two points map to the same point $Q$ in 
$\PP$ and $Q$ must be $K$-rational. Since $\alpha$ and $\beta$ both commute with
$\iota$, they descend to automorphisms $\alphabar$ and $\betabar$ of $\PP$. The 
point $Q$ is one of the fixed points of the involution $\alphabar$, so both
fixed points of $\alphabar$ must be $K$-rational. By choosing the coordinate $x$
on $\PP$ appropriately, we may assume that the fixed points of $\alphabar$ are
$x=0$ and $x=\infty$. This means that an equation for $Z'$ is 
\[y^2 = a_6 x^6 +  a_4 x^4 + a_2 x^2  + a_0,\]
for some constants $a_0, a_2, a_4, a_6 \in K$, and in this model $\alpha$ sends
$(x,y)$ to $(-x,y)$.

Since $(\alpha\beta)^2 = \iota$ and $\iota$ induces the trivial automorphism on
$\PP$, we see that $\alphabar\betabar$ is an involution of $\PP$, implying that
$\alphabar$ and $\betabar$ commute. This means that $\betabar$ must be a linear
fractional transformation of the form $x\mapsto d/x$ for some $d \in K^\times$. 
Since the fixed points of $\betabar$ are also $K$-rational, $d$ is a square in 
$K^\times$. By scaling $x$ by $\sqrt{d}$, we may assume that $d = 1$. This 
implies that $a_6=a_0$ and $a_4=a_2$, so that an equation for $Z'$ is
\[y^2 = a_0 x^6 +  a_2 x^4 + a_2 x^2  + a_0,\]
and so that $\beta$ sends $(x,y)$ to $(1/x,y/x^3)$. Let $c = a_0$ and
$s = a_2/a_0$, and let $(Z,\epsilon)$ be the genus-$2$ curve with $D_4$-action
given by~\eqref{EZstwist} and~\eqref{EZstwisteps}. Our model for $Z'$ gives us
an isomorphism $(Z',\epsilon')\to(Z,\epsilon)$.

Demanding that the fixed points of $\alphabar$ be $0$ and $\infty$ and that the
fixed points of $\betabar$ be $1$ and $-1$ completely specifies the standard
hyperelliptic model for $Z'$, up to scaling $y$ by a constant. These scalings
modify $c$ by multiplication by a square in $K^\times$. This proves the final
statement of the lemma.
\end{proof}

\begin{lemma}
\label{L:moduli2}
The two curves
\[
  Z\colon y^2 = c(x^2 + 1)(x^4 + s x^2 + 1) \text{\quad and }\ 
  Z'\colon y^2 = c'(x^2 + 1)(x^4 + s' x^2 + 1)
\]
are isomorphic to one another if and only if  either $c' = c$ 
\textup(in $K^\times/K^{\times 2}$\textup) and $s' = s$, or $c' = 2 c (s + 2)$
\textup(in $K^\times/K^{\times 2}$\textup) and $(s'+2)(s+2) = 16$.
\end{lemma}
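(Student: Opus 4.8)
The strategy is to reduce the problem to Lemma 1.5 (which we already have) by tracking how the $D_4$-action is affected when we forget it. Given a curve $Z$ of the form \eqref{EZstwist}, it comes with its distinguished inclusion $\epsilon$ as in \eqref{EZstwisteps}, sending $a\mapsto\sigma$ and $b\mapsto\tau$. An abstract isomorphism $Z\to Z'$ of curves need not respect $\epsilon$ and $\epsilon'$, so it will not directly be an isomorphism of curves-with-$D_4$-action; the point is to understand the discrepancy. First I would observe that $\langle\sigma,\tau\rangle\cong D_4$ sits inside the geometric automorphism group, which (for $s\notin\{-6,-1,14\}$, by Remark~\ref{remark:igusa}) equals $D_4$, so any isomorphism $Z\to Z'$ conjugates $\epsilon(D_4)$ onto $\epsilon'(D_4)$ — i.e.\ $\varphi$ is an isomorphism $(Z,\epsilon)\to(Z',\epsilon'{}^{\!\theta})$ for some automorphism $\theta$ of the abstract group $D_4$. (The special values of $s$ must be handled separately, but there the isomorphism classes can be matched up by a direct check, or one notes that both sides of the claimed equivalence are stable under degeneration.)

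Next I would use that $\Aut(D_4)\cong D_4$ and that the hyperelliptic involution $(ab)^2$ is central, hence fixed by every $\theta$, consistent with Notation~\ref{N:ZZ}. The inner automorphisms of $D_4$ do not change the isomorphism class of the pair, by Remark~\ref{rem:conjugation}, so only the outer automorphism class of $\theta$ matters; that quotient has order $2$, generated by the automorphism $\xi$ interchanging $a$ and $b$ (i.e.\ interchanging $\sigma$ and $\tau$). Thus there are exactly two cases: $\varphi$ is an isomorphism of $D_4$-curves in the sense of Notation~\ref{N:ZZ} (then Lemma~\ref{L:moduli} gives $s'=s$ and $c'=c$ in $K^\times/K^{\times2}$), or $\varphi$ intertwines $\epsilon$ with $\epsilon'$ precomposed with $\xi$, meaning $\varphi$ carries the pair $(\sigma,\tau)$ on $Z$ to $(\tau',\sigma')$ on $Z'$.

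To handle the second case, I would produce an explicit "$\xi$-twist" construction: starting from $Z\colon y^2=c(x^2+1)(x^4+sx^2+1)$, find a birational change of coordinates that swaps the roles of $\sigma$ and $\tau$ and puts the curve back into the standard form \eqref{EZstwist}. Concretely, $\sigma$ fixes $x=0,\infty$ and $\tau$ fixes $x=\pm1$; a Möbius transformation sending $\{0,\infty\}\to\{1,-1\}$ and $\{1,-1\}\to\{0,\infty\}$ — for instance $x\mapsto (x-1)/(x+1)$ up to the allowed normalizations — will conjugate $\sigma\leftrightarrow\tau$. Pushing \eqref{EZstwist} through this substitution and clearing denominators, I expect to land on a curve of the form $y^2 = c''(x^2+1)(x^4+s''x^2+1)$ with $s''$ and $c''$ rational functions of $s$ and $c$; the bookkeeping should yield exactly $(s''+2)(s+2)=16$ and $c'' = 2c(s+2)$ in $K^\times/K^{\times2}$. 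Then applying Lemma~\ref{L:moduli} to $(Z'',\epsilon'')$ versus $(Z',\epsilon')$ gives the stated relations, and conversely any $(c',s')$ satisfying them arises this way, so the "if" direction follows by reversing the construction (the map is an involution on the parameter space, which is a useful consistency check).

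The main obstacle is the explicit coordinate computation in the second case: one has to choose the Möbius substitution with exactly the right normalization so that the transformed equation is again in the normal form of \eqref{EZstwist} rather than merely projectively equivalent to it, and then read off $s''$ and $c''$ — in particular keeping careful track of the factor of $2$ and of which quantities are only well-defined modulo squares. I would also need to double-check that the excluded parameter values ($c=0$, $s=\pm2$, and the non-generic $s\in\{-6,-1,14\}$) do not interfere, i.e.\ that the formula $(s'+2)(s+2)=16$ never forces $s'$ or $s$ out of the allowed range in a way that breaks the correspondence, and that $2c(s+2)\neq0$ precisely when $c\neq0$ and $s\neq-2$.
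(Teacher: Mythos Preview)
Your approach is essentially the same as the paper's: both reduce to counting embeddings $D_4\hookrightarrow\Aut(Z)$ up to conjugation via Lemma~\ref{L:moduli}, use that $\mathrm{Out}(D_4)$ has order~$2$ in the generic case, and exhibit the explicit M\"obius substitution (the paper uses $(x,y)\mapsto((x+1)/(x-1),y/(x-1)^3)$) to produce the second model. The only substantive difference is in the nongeneric cases $s\in\{-6,-1,14\}$: your ``stable under degeneration'' remark is not really an argument, and the paper instead enumerates, for each possible $K$-rational automorphism group $G\supseteq D_4$ (a subgroup of the geometric automorphism group of order $24$, $48$, or $240$), the conjugacy classes of embeddings $D_4\hookrightarrow G$ --- in particular discovering that for $s=-6$ with $-2\in K^{\times2}$ the two embeddings become conjugate in the larger group, so the model is unique. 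Your ``direct check'' would have to do exactly this.
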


Note that the lemma says that every curve of the form given by
Equation~\eqref{EZstwist} has exactly one other model of the same form, unless
$s = -6$ and $-2$ is a square, in which case the lemma claims that the given
model is unique.

\begin{proof}[Proof of Lemma~\textup{\ref{L:moduli2}}]
If either of the given relations among $c,c'$ and $s,s'$ hold, it is easy to
check that the two curves are isomorphic to each other. The isomorphism in the 
second case is given by $(x,y)\mapsto \big( (x+1)/(x-1), y/(x-1)^3\big)$.

On the other hand, suppose we have a curve $Z$ as in the lemma. We would like to
see how many other models it has that are also of the form given by
Equation~\eqref{EZstwist}. Notation~\ref{N:ZZ}, Remark~\ref{rem:conjugation}, 
and Lemma~\ref{L:moduli} show that these models correspond to the embeddings of 
$D_4$ into $\Aut(Z)$, up to conjugation by $\Aut(Z)$, so we just need to count
the number of such embeddings up to conjugacy.

If $s\not\in\{-6,-1,14\}$ then $\Aut(Z)\cong D_4$ by Remark~\ref{remark:igusa}. 
The outer automorphism group of $D_4$ has two elements, so there are two
embeddings of $D_4$ into $\Aut(Z)$ up to conjugation and hence two models of the
form~\eqref{EZstwist}. These are accounted for by the two models in the lemma.

If $s\in\{-1,14\}$, then by computing Igusa invariants and 
consulting~\cite[\S8]{Igusa1960} we find that~$\Aut_{\Kbar}(Z)$ is a certain
group of order $24$, so $\Aut(Z)$ is a subgroup of this group that
contains~$D_4$. By enumeration, we find that for each such subgroup $G$ there
are two conjugacy classes of embedding $D_4\hookrightarrow G$. Once again, these
are accounted for by the two models in the lemma.

When $s=-6$, we find from Igusa that $\Aut_{\Kbar}(Z)$ is either a certain group
$G_{48}$ of order $48$ (if $K$ has characteristic not $5$) or a certain group
$G_{240}$ of order $240$ (if $K$ has characteristic~$5$). Both of these groups
contain a unique subgroup $G_{16}$ of order $16$. For every subgroup $G$ of 
$G_{48}$ or $G_{240}$ that contains $D_4$, we find that the number of conjugacy
classes of embeddings $D_4\hookrightarrow G$ is equal to $2$ if $G$ does not
contain $G_{16}$, and is equal to $1$ if $G$ does contain $G_{16}$.

In terms of the model $y^2 = c (x^2 + 1)(x^4 - 6x^2 + 1)$ for $Z$, the group
$G_{16}$ is generated by the involutions $\sigma$ and $\tau$ together with the
automorphism $\upsilon$ of order $8$ given by
$(x,y)\mapsto ((x-1)/(x+1), 2\sqrt{-2}\, y/(x+1)^3)$. We see that $G_{16}$ is
contained in $\Aut(Z)$ if and only if $-2$ is a square in $K$. Combined with the
results of the preceding paragraph, we find two models for $Z$ when $-2$ is not
a square, and one otherwise.
\end{proof}

\subsection{An invariant of the curve \texorpdfstring{$Z$}{Zs}}
\label{sec:invariant}

Lemma~\ref{L:moduli2} shows that two curves $Z$ and $Z'$ of the form given by 
Equation~\eqref{EZstwist} are geometrically isomorphic to one another if and 
only if either $s'=s$ or $s' = (-2s + 12)/(s + 2)$. The function 
\begin{equation} \label{eq:invariant}
    I(s) \colonequals -\frac{(s-2)^2}{4(s+2)} = 1 - \frac{s+s'}{4}
\end{equation}
is stable under the involution $s\leftrightarrow s'$ and is rational of
degree~$2$, so it gives a geometric invariant for the curve $Z$.

\subsection{Structure of the Jacobian of the curve \texorpdfstring{$Z$}{Zs}}
\label{ssec:structureofJac}

In this section, we consider the quotients of $Z$ by the non-central involutions
of $D_4$. 

Let $E$ be the elliptic curve defined by
\begin{equation} 
\label{EdefE} E\colon\quad y^2 = c(x + 1)(x^2 + sx + 1).
\end{equation}

\begin{lemma}
\label{lem:quot1}
The quotient of $Z$ by each of the involutions $(x,y)\mapsto(-x,\pm y)$ is
isomorphic to $E$, and $\Jac(Z)$ is isogenous to $E^2$.
\end{lemma}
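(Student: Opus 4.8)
The plan is to exploit the two involutions $\sigma\colon(x,y)\mapsto(-x,y)$ and $\kappa\sigma\colon(x,y)\mapsto(-x,-y)$ on $Z$. First I would write down explicit quotient maps. For $\sigma$, the invariant functions are $u = x^2$ and $y$, and from the defining equation $y^2 = c(x^2+1)(x^4 + sx^2 + 1)$ one gets $y^2 = c(u+1)(u^2 + su + 1)$, which is exactly the equation for $E$. So the quotient $Z/\langle\sigma\rangle$ is birational to $E$; since $E$ has genus $1$ and $Z\to Z/\langle\sigma\rangle$ cannot be unramified (Riemann--Hurwitz: an unramified double cover of a genus-$1$ curve has genus $1$, not $2$), this quotient is a genuine elliptic curve, and the model $y^2 = c(x+1)(x^2+sx+1)$ is smooth for generic parameters; I would note it is $E$ as defined in~\eqref{EdefE}. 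For $\kappa\sigma$, the invariant functions are $v = x^2$ and $w = xy$; substituting gives $w^2 = x^2 y^2 = v\cdot c(v+1)(v^2+sv+1)$, so $Z/\langle\kappa\sigma\rangle$ is the elliptic curve $E'\colon w^2 = c\,v(v+1)(v^2+sv+1)$. This $E'$ is a curve of genus $1$ as well (again by Riemann--Hurwitz applied to the double cover $Z\to E'$, which is ramified at the fixed points of $\kappa\sigma$).

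Next I would invoke the standard decomposition for a curve with a Klein four-group of automorphisms. The subgroup $V = \langle \sigma, \kappa\rangle \cong V_4$ of $\Aut(Z)$ has three non-trivial elements $\sigma$, $\kappa$, and $\kappa\sigma$, with quotients $E$, $\PP$ (the hyperelliptic quotient, genus $0$), and $E'$ respectively. The classical isogeny (due to the work going back to Kani--Rosen and used ubiquitously for such decompositions) gives that $\Jac(Z)$ is isogenous to $\Jac(E)\times\Jac(\PP)\times\Jac(E') \cong E\times E'$ over $K$, using that the quotient map by $V_4$ sends $\Jac(Z/V)$ to a point since $Z/V$ has genus $0$. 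Alternatively, and perhaps more cleanly, one can argue directly: the two maps $Z\to E$ and $Z\to E'$ induce a map $\Jac(Z)\to E\times E'$ (via pullback on the dual side, i.e.\ $E^\vee\times E'^\vee\to\Jac(Z)$), and one checks this is an isogeny by a dimension count together with the observation that the pullbacks of the two polarizations are linearly independent in $\End(\Jac(Z))\otimes\QQ$ — equivalently, that no nonzero abelian subvariety of $\Jac(Z)$ is killed by both quotient maps, which follows because a common kernel would descend to $Z/V$, which has genus $0$.

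The remaining point is to identify $E'$ with $E$ up to isogeny. I would show $E$ and $E'$ are in fact $K$-isomorphic, or at least $K$-isogenous, by exhibiting an explicit map. The curve $E'\colon w^2 = c\,v(v+1)(v^2+sv+1)$ has full rational $2$-torsion contribution from the factor $v(v+1)$; meanwhile $E\colon y^2 = c(x+1)(x^2+sx+1)$. A natural guess is that the map $(x,y)\mapsto (1/x, y/x^{?})$ or a fractional-linear change in $x$ that swaps the roles of the linear factor and one of the quadratic roots relates them. In fact, there is a cleaner route: the curve $Z$ also admits the involution $\tau\colon (x,y)\mapsto(1/x, y/x^3)$, and the subgroup $\langle\tau,\kappa\rangle$ is another $V_4$; its quotients give $Z/\langle\tau\rangle$ and $Z/\langle\kappa\tau\rangle$, and by symmetry of the defining polynomial under $x\mapsto 1/x$ one of these is again isomorphic to $E$. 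Since $\rho = \sigma\tau$ has $\rho^2 = \kappa$, conjugation by $\rho$ permutes the non-central involutions, and one deduces that $E$ and $E'$ are isomorphic. Putting this together: $\Jac(Z)\sim E\times E'\sim E\times E = E^2$.

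I expect the main obstacle to be the bookkeeping in the last step — pinning down the precise $K$-isomorphism (or isogeny) between $E$ and $E'$, including keeping track of the twisting constant $c$ and whether one needs a quadratic twist. The decomposition $\Jac(Z)\sim E\times E'$ is essentially formal once the two quotient maps are written down; the identification $E'\cong E$ requires an honest, if short, computation with the fractional-linear transformation relating the two Weierstrass models, and one must check the leading coefficient matches up (possibly up to a square, which is harmless for isogeny). A safe fallback, if the explicit isomorphism is annoying, is simply to compute $j$-invariants of $E$ and $E'$ as rational functions of $s$ and verify they are equal, and then note that over $K$ (containing $i$) the relevant twist is trivial or handled by the conventions already in place; but I would prefer the geometric argument via the automorphism $\rho$ since it makes the source of the isomorphism transparent and will be reusable later in the paper.
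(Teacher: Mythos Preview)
Your approach is essentially the same as the paper's: compute both quotients via invariants ($u=x^2,y$ for $\sigma$; $v=x^2,w=xy$ for $\kappa\sigma$), then apply the Kani--Rosen $V_4$ decomposition with the hyperelliptic quotient contributing nothing. The only difference is in the identification $E'\cong E$. The paper does this in one line by the explicit substitution $(x,y)\mapsto(1/x,y/x^2)$ on the model $y^2=cx(x+1)(x^2+sx+1)$, which immediately returns~\eqref{EdefE} and sidesteps all the twist bookkeeping you anticipate. Your conjugation argument is also valid --- indeed $\rho\sigma\rho^{-1}=\kappa\sigma$, so the $K$-rational automorphism $\rho$ of $Z$ descends to a $K$-isomorphism $Z/\langle\sigma\rangle\to Z/\langle\kappa\sigma\rangle$, and no twist issue arises --- but the paper's explicit change of variable is shorter and removes the uncertainty you flag as the ``main obstacle.''
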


\begin{proof}
The quotient of $Z$ by the involution $(x,y)\mapsto(-x,y)$ is clearly $E$.

To find the quotient by $(x,y)\mapsto(-x,-y)$, it helps to rewrite the equation
for $Z$ as 
\[ x^2 y^2 = c x^2 (x^2 + 1) (x^4 + sx^2 + 1).\]
Since $xy$ and $x^2$ are both fixed by the involution, the quotient is given by
the equation
\[ y^2 = cx(x+1)(x^2 + sx + 1).\]
If we replace $(x,y)$ with $(1/x,y/x^2)$, we obtain \eqref{EdefE} and hence the
second quotient is isomorphic to $E$.

These two involutions generate a subgroup of $\Aut(Z)$ isomorphic to the Klein
four-group. The product of these involutions is the hyperelliptic involution
$\kappa$; the quotient of $Z$ by $\kappa$ is the projective line.
By~\cite[Theorem C]{KaniRosen1989}, $\Jac(Z)$ is isogenous to the product of the
Jacobians of the three quotients, thus $\Jac(Z) \sim \Jac(E)^2 \cong E^2$.
\end{proof}

Let $s' = (-2s+12)/(s+2)$ and $c' = 2c(s+2)$, and let $E'$ be the elliptic curve
\[
E'\colon\quad y^2 = c' (x + 1) (x^2 + s'x + 1).
\]
Note that there is a $2$-isogeny $E\to E'$ given by
\[
(x,y)\mapsto 
\Bigg( \frac{1}{s+2} \frac{(2 x + s)(x - 1)}{(x + 1)},
\frac{4}{s+2} \frac{(x^2 + 2 x + s - 1)}{(x+1)^2}\ y \Bigg),
\]
whose kernel contains the $2$-torsion point $P = (-1,0)$ of $E$. The kernel of
the dual isogeny $E'\to E$ contains the $2$-torsion point $P'=(-1,0)$ of $E'$.

\begin{lemma}
\label{lem:quot2}
The quotient of $Z$ by each of the involutions $(x,y)\mapsto(1/x,\pm y/x^3)$ is 
isomorphic to~$E'$.
\end{lemma}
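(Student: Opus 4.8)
The plan is to mimic the computation used in Lemma~\ref{lem:quot1}, but now for the involution $\tau\colon (x,y)\mapsto(1/x,y/x^3)$ and its twist by the hyperelliptic involution. The key observation is that Lemma~\ref{lem:quot2} is really the statement of Lemma~\ref{lem:quot1} applied to a different model of the same curve $Z$ --- namely the second model produced by Lemma~\ref{L:moduli2}, with parameters $c' = 2c(s+2)$ and $s'=(-2s+12)/(s+2)$. Under the explicit isomorphism $(x,y)\mapsto\big((x+1)/(x-1),\,y/(x-1)^3\big)$ appearing in the proof of Lemma~\ref{L:moduli2}, the involution $\sigma'\colon(x,y)\mapsto(-x,y)$ on the $(c',s')$-model pulls back to an involution of the $(c,s)$-model; a direct substitution shows this pulled-back involution is precisely $\tau$ (the fixed locus $x=0,\infty$ of $\sigma'$ corresponds to $x=\pm1$, the fixed points of $\tau$). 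Hence the quotient of $Z$ by $\tau$ is, by Lemma~\ref{lem:quot1} applied to the $(c',s')$-model, isomorphic to the elliptic curve $y^2 = c'(x+1)(x^2+s'x+1)$, which is exactly $E'$.

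Concretely, I would carry this out as follows. First, note that the function field of $Z/\langle\tau\rangle$ is generated by the $\tau$-invariant rational functions $u = x + 1/x$ and $v = y(1+1/x^3)/(\text{something})$; more cleanly, use $u = x+1/x$ and $v = y/x\cdot(1+1/x^2)$ or a similar $\tau$-invariant combination, and eliminate to get a plane model. Rewriting Equation~\eqref{EZstwist} as $\big(y/x^3\big)^2 x^6 = c(x^2+1)(x^4+sx^2+1)$ and dividing through by $x^3$, one expresses everything in terms of $u=x+1/x$: indeed $x^2+1 = xu$ wait --- more precisely $(x^2+1)/x = u$ and $(x^4+sx^2+1)/x^2 = u^2 + s - 2$, so the equation becomes $v^2 = c\,u\,(u^2+s-2)$ for the appropriate invariant $v$, after clearing denominators. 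Then a linear change of variable in $u$ (to move the roots into the shape $(x+1)(x^2+s'x+1)$) together with the scaling that accounts for the factor $2(s+2)$ in $c'$ produces the stated model $E'$. Second, for the twisted involution $\tau\kappa\colon(x,y)\mapsto(1/x,-y/x^3)$, I would repeat the trick from Lemma~\ref{lem:quot1}: multiply the equation by an even power of $x$, use that $y/x$ (odd in the relevant sense) times an anti-invariant combination becomes invariant under $\tau\kappa$, and check the resulting cubic model is again $E'$ up to the substitution $x\mapsto 1/x$ and rescaling.

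Alternatively --- and this is probably the cleanest write-up --- I would simply invoke Remark~\ref{rem:conjugation} and Lemma~\ref{L:moduli2} directly: the pair $(Z,\epsilon)$ with $\epsilon(a)=\sigma$, $\epsilon(b)=\tau$ is isomorphic, via the map of Lemma~\ref{L:moduli2}, to the $(c',s')$-curve equipped with the analogous $\epsilon$; under that isomorphism the involution $\tau$ on $Z$ corresponds to $\sigma'$ on $Z'$; and Lemma~\ref{lem:quot1} identifies the quotient of the $(c',s')$-curve by $\sigma'$ with $E'$. The twisted version follows the same way since the isomorphism commutes with the hyperelliptic involution.

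The main obstacle is purely bookkeeping: one must verify that the composite substitution sends $\tau$ to $\sigma'$ (rather than to $\tau'$ or $\sigma'\kappa'$) and that the scaling factor on $y$ introduced by the change of variables matches the prescribed $c' = 2c(s+2)$ in $K^\times/K^{\times2}$ --- that is, one must track the constant $2(s+2)$ through the transformation and confirm it is the same one appearing in Lemma~\ref{L:moduli2}. This is a finite check but it is where an error would most naturally creep in; I would do it by substituting the map $(x,y)\mapsto\big((x+1)/(x-1),y/(x-1)^3\big)$ into $y^2 = c'(x^2+1)(x^4+s'x^2+1)$ and confirming one recovers $c\,(x^2+1)(x^4+sx^2+1)$ on the nose, then reading off how $\sigma'$ transports.
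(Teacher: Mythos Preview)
Your proposal is correct and your ``cleanest write-up'' is exactly the paper's proof: apply the change of variables $(x,y)\mapsto\big((x+1)/(x-1),\,y/(x-1)^3\big)$ to pass to the $(c',s')$-model, observe that the involutions $(x,y)\mapsto(1/x,\pm y/x^3)$ transport to $(x,y)\mapsto(-x,\mp y)$, and invoke Lemma~\ref{lem:quot1}. One small bookkeeping correction: under this isomorphism $\tau$ actually corresponds to $\sigma'\kappa'$ rather than $\sigma'$ (there is a sign flip, as the paper notes with the $\mp$), but since Lemma~\ref{lem:quot1} handles both signs this is immaterial.
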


\begin{proof}
Replacing $x$ with $(x+1)/(x-1)$ and $y$ with $y/(x-1)^3$ in the equation
for~$Z$, we find that $Z$ can also be written as
\[
y^2 = c' (x^2 + 1) (x^4 + s'x^2 + 1).
\]
The two involutions $(x,y)\mapsto(1/x,\pm y/x^3)$ in the original model become
the involutions $(x,y)\mapsto(-x,\mp y)$ in the new model. The result follows 
from Lemma~\ref{lem:quot1}.
\end{proof}

Lemma~\ref{lem:quot1} says that there is an isogeny $E^2\to\Jac(Z)$, but we can
be much more precise.

\begin{proposition}
\label{prop:JacobianDecomposition}
Let $E$ be as above, let $P = (-1,0)\in E[2]$, and let $Q$ and $R$ be the other
two geometric points of order $2$ on $E$. Let $\psi\colon E[2]\to E[2]$ be the
automorphism that fixes $P$ and swaps $Q$ and $R$. Then there is an isogeny
$\varphi\colon E\times E\to \Jac(Z)$ whose kernel is the graph of $\psi$, and
the pullback via $\varphi$ of the principal polarization on $\Jac(Z)$ is twice
the product polarization on $E\times E$.
\end{proposition}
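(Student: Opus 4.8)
The plan is to realize the isogeny $\varphi$ explicitly using the two maps $Z \to E$ coming from the quotients by the non-central involutions $\sigma\colon(x,y)\mapsto(-x,y)$ and $\sigma\kappa\colon(x,y)\mapsto(-x,-y)$, as identified in Lemma~\ref{lem:quot1}. Write $\pi_1\colon Z\to E$ and $\pi_2\colon Z\to E$ for these two quotient maps (after the coordinate change $(x,y)\mapsto(1/x,y/x^2)$ that puts the second quotient in the form~\eqref{EdefE}). Pullback of divisor classes gives maps $\pi_1^*,\pi_2^*\colon E\to\Jac(Z)$, and I would define $\varphi\colon E\times E\to\Jac(Z)$ by $(\varepsilon_1,\varepsilon_2)\mapsto \pi_1^*\varepsilon_1 + \pi_2^*\varepsilon_2$. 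By the Kani--Rosen argument already invoked in Lemma~\ref{lem:quot1}, this $\varphi$ is an isogeny; what remains is to pin down its kernel and to compute the pullback polarization.

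For the kernel: the standard fact is that for a degree-$2$ quotient $\pi\colon Z\to E$ with $Z$ of genus $2$ and $E$ of genus $1$, the composite $\hat\pi_1\circ\pi_2^*\colon E\to E$ (dual of one pullback composed with the other pushforward) factors through $E[2]$, and more precisely $\ker\varphi$ is the antidiagonally-embedded image of a subgroup of $E[2]$ determined by how the ramification points of $\pi_1$ and $\pi_2$ sit relative to each other. Concretely, $\pi_1$ is ramified over the two points of $E$ lying below the fixed locus of $\sigma$ on $Z$, and $\pi_2$ over the points below the fixed locus of $\sigma\kappa$; I would trace these through the explicit equations to see which $2$-torsion points of $E$ occur, and check that the resulting gluing is exactly the graph of the automorphism $\psi$ fixing $P=(-1,0)$ and swapping the other two $2$-torsion points $Q,R$. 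The cleanest way to organize this is to note that $\sigma$ and $\sigma\kappa$ are swapped by the central element $\kappa$ composed with... more usefully, by the outer automorphism $\xi$ of $D_4$ realized geometrically, which interchanges the two quotient maps; this symmetry forces the kernel to be a graph of an involution of $E[2]$, and identifying the fixed point $P$ then amounts to checking which ramification point is common to both covers. I expect $P=(-1,0)$ to be precisely the image of the $\sigma$- and $\sigma\kappa$-fixed behavior over $x=\infty$/the point coming from the Weierstrass structure, but the bookkeeping of the two coordinate changes is where care is needed.

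For the polarization: I would use the general principle that for $\pi\colon Z\to E$ of degree $2$, one has $\hat{\pi^*}\circ\lambda_{\Jac(Z)}\circ\pi^* = \lambda_E\circ(\hat\pi\circ\pi) = 2\lambda_E$ (degree-$2$ map), and the cross terms $\hat{\pi_1^*}\circ\lambda_{\Jac Z}\circ\pi_2^*$ vanish because $\pi_1$ and $\pi_2$ are "orthogonal" quotients (their product is the hyperelliptic quotient $Z\to\PP$, which has trivial Jacobian). Assembling these into the $2\times 2$ matrix of $\varphi^*\lambda_{\Jac(Z)}$ on $E\times E$ gives exactly $2\lambda_{E\times E}$, i.e.\ twice the product polarization. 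A degree check then confirms consistency: $\deg\varphi = \#\ker\varphi = 4$, and $\deg(\varphi^*\lambda_{\Jac Z}) = (\deg\varphi)^2\cdot\deg\lambda_{\Jac Z} = 16$, matching $\deg(2\lambda_{E\times E}) = 2^4 = 16$.

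The main obstacle is the kernel computation — specifically, correctly identifying the distinguished $2$-torsion point $P$ and verifying that the gluing is the graph of $\psi$ rather than of some other automorphism of $E[2]$. The polarization statement, by contrast, should follow formally from orthogonality of the two quotients once the Kani--Rosen setup is in place; I would carry it out after the kernel is settled, and use the degree bookkeeping above as a sanity check throughout.
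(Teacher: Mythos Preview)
Your approach is sound and would work, but it differs from the paper's. The paper does not carry out any of these computations directly: it invokes Kani's general theory \cite[\S2]{Kani1997}, which guarantees from the single degree-$2$ map $Z\to E$ the existence of an elliptic curve $F$, an isomorphism $\psi\colon E[2]\to F[2]$, and an isogeny $E\times F\to\Jac(Z)$ with kernel the graph of $\psi$ and with the correct polarization behavior; then it cites the explicit construction in \cite[\S3]{HoweLeprevostPoonen2000} to identify $F\cong E$ and to pin down $\psi$ as the stated involution. So the paper's proof is two sentences of citations.

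Your route is the hands-on unwinding of what those references contain: build $\varphi$ from the pair of quotient maps $\pi_1,\pi_2$, then compute the kernel and polarization directly. This is more self-contained and conceptually transparent --- one sees exactly why the $V_4$-structure (with third quotient $\PP$) forces orthogonality and why $\ker\varphi$ is a graph in $E[2]\times E[2]$ --- at the cost of the explicit bookkeeping you flag as the main obstacle. One small correction: the outer automorphism $\xi$ of $D_4$ swaps $\sigma$ and $\tau$, not $\sigma$ and $\sigma\kappa$; the symmetry you want between $\pi_1$ and $\pi_2$ comes instead from the fact that $\sigma$ and $\sigma\kappa$ are conjugate inside $D_4$ (by $\rho$), or more directly from the explicit isomorphism $(x,y)\mapsto(1/x,y/x^2)$ already used in Lemma~\ref{lem:quot1}. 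With that adjustment, your plan for both the kernel and the polarization is correct.
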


\begin{proof}
Because there is a degree-$2$ map $Z\to E$, the general theory set out in
\cite[\S2]{Kani1997} shows that there is an elliptic curve $F$, an isomorphism
$\psi\colon E[2]\to F[2]$, and an isogeny $E\times F\to \Jac(Z)$ satisfying the
conclusion of the proposition. The explicit construction carried out in
\cite[\S3]{HoweLeprevostPoonen2000} shows that $F\cong E$ and that $\psi$ is the
isomorphism specified in the statement.
\end{proof}

In fact, almost every pair $(E,P)$ consisting of an elliptic curve $E$ over $K$
and a $K$-rational $2$-torsion point arises in this way.

\begin{proposition}
\label{prop:ZfromEP}
Let $E$ be an elliptic curve over $K$ with a rational point $P$ of order $2$.
Then there is a genus-$2$ curve $Z$ over $K$ with $D_4$-action that gives rise
to this $(E,P)$ as above if and only if $E$ does \emph{not} have a geometric
automorphism $\alpha\neq\pm 1$ that fixes~$P$.
\end{proposition}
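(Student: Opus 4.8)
The plan is to invert the construction of Lemma~\ref{L:moduli}, Lemma~\ref{lem:quot1} and Proposition~\ref{prop:JacobianDecomposition}: starting from $(E,P)$, produce a candidate curve $Z$ of the form~\eqref{EZstwist}, and then analyse exactly when this candidate genuinely realises the given pair $(E,P)$ (as opposed to the pair $(E,Q)$ or $(E,R)$ obtained by moving the $2$-torsion point). First I would put $E$ in a normal form adapted to $P$: since $P$ is a $K$-rational $2$-torsion point, $E$ has a model $y^2 = c(x+1)(x^2 + sx + 1)$ with $P = (-1,0)$, for suitable $c \in K^\times$ and $s \in K$, and this is exactly Equation~\eqref{EdefE}. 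The genus-$2$ curve $Z$ defined by~\eqref{EZstwist} with these same $c, s$ then has $D_4$-action via~\eqref{EZstwisteps}, and by Lemma~\ref{lem:quot1} the quotient of $Z$ by $\sigma$ is $E$; by Proposition~\ref{prop:JacobianDecomposition} this $Z$ gives rise to a pair $(E, P_0)$, where $P_0$ is the $2$-torsion point of $E$ fixed by the gluing isomorphism $\psi$. So the content of the proposition is: the recipe above produces, out of the given normal form, the pair $(E, P)$ itself precisely when no extra automorphism interferes — and conversely, if $E$ has an automorphism $\alpha \neq \pm 1$ fixing $P$, then \emph{every} genus-$2$ curve with $D_4$-action over $E$ fails to single out $P$.

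For the ``if'' direction (no obstruction $\Rightarrow$ $Z$ exists) I would argue as follows. The three $2$-torsion points of $E$ are $P = (-1,0)$ and $Q, R$, the two roots of $x^2 + sx + 1 = 0$. The normal form~\eqref{EdefE} is characterised, up to the scaling of $y$ that changes $c$ in $K^\times/K^{\times 2}$, by the requirement that the distinguished point $P$ sit at $x = -1$ while $Q, R$ are interchanged by $x \mapsto 1/x$; this is possible because $Q, R$ are swapped by the hyperelliptic-type symmetry exactly when $P$ is the point left fixed. Feeding $c, s$ into~\eqref{EZstwist} and~\eqref{EZstwisteps} and invoking Proposition~\ref{prop:JacobianDecomposition}, the resulting $Z$ glues $E$ to $E$ along the isomorphism fixing $P$ and swapping $Q, R$ — which is precisely the pair $(E,P)$ we wanted. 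The only thing that can go wrong is that $s = \pm 2$ is forbidden in~\eqref{EZstwist}: $s = 2$ would force $x^2+sx+1 = (x+1)^2$, i.e. $P = Q = R$, impossible for $2$-torsion on an elliptic curve; and $s = -2$ would give $x^2 + sx + 1 = (x-1)^2$, meaning $Q = R = (1,0)$ collides, again impossible. So $Z$ is always a genuine genus-$2$ curve, and the ``if'' direction needs no hypothesis beyond $(E,P)$ being what it is — meaning the real constraint must enter through whether $Z$ distinguishes $P$ from $Q, R$, and that is where the automorphism hypothesis lives.

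For the ``only if'' direction I would show the contrapositive: if $E$ has a geometric automorphism $\alpha \neq \pm 1$ with $\alpha(P) = P$, then no $Z$ with $D_4$-action gives rise to $(E,P)$. Such an $\alpha$ necessarily permutes $\{Q,R\}$ nontrivially (it cannot fix all of $E[2]$ and be $\neq \pm 1$, since an automorphism fixing $E[2]$ pointwise is $\pm 1$ when $\Char K \neq 2, 3$; the $\Char 3$, $j = 0$ case can be checked by hand, or excluded because there $P$ is not distinguished among the $2$-torsion at all), so $\alpha$ acts on $E[2]$ exactly as the gluing map $\psi$ of Proposition~\ref{prop:JacobianDecomposition}. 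But then, given any genus-$2$ curve $Z'$ with $D_4$-action having $Z'/\sigma \cong E$, the automorphism $\alpha$ of $E$ can be used to modify the gluing data: the Kani–Howe–Leprévost–Poonen correspondence attaches to $Z'$ the pair (elliptic curve, $2$-torsion point) via the point of $E[2]$ fixed by the gluing isomorphism, and conjugating the $D_4$-structure by an automorphism covering $\alpha$ — equivalently, applying the other model from Lemma~\ref{L:moduli2} — cyclically permutes which of $P, Q, R$ plays the distinguished role, so $Z'$ cannot canonically return $P$. More concretely, the second model in Lemma~\ref{L:moduli2} already shows any such $Z'$ also presents as $y^2 = c'(x^2+1)(x^4+s'x^2+1)$ with $(s'+2)(s+2) = 16$, and this second presentation sends the distinguished $2$-torsion point to a \emph{different} point of $E[2]$ whenever the two presentations are genuinely distinct; they fail to be distinct exactly in the exceptional $s = -6$, $-2 \in K^{\times 2}$ case — and one checks that case corresponds precisely to $E$ having $j = 1728$ with the extra order-$4$ automorphism fixing $P$, i.e. to the obstruction in the statement.

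The main obstacle I expect is the bookkeeping in the ``only if'' direction: pinning down exactly which $2$-torsion point the Kani/HLP construction assigns to each of the (at most two) $D_4$-models of a given $Z$, and verifying that the ``collision'' of the two models — which by the remark after Lemma~\ref{L:moduli2} happens iff $s = -6$ and $-2$ is a square — matches up on the nose with ``$E$ has an automorphism $\neq \pm 1$ fixing $P$.'' This requires either tracing through the explicit formulas of \cite[\S3]{HoweLeprevostPoonen2000} or finding a clean conceptual reason (e.g. counting $D_4$-structures on $Z$ whose $\sigma$-quotient is $E$ with marked point $P$, versus $\operatorname{Aut}$-orbits), and that identification is where the real work is. Everything else — normal forms, the exclusion of $s = \pm 2$, the reduction of ``fixing $P$'' to ``swapping $Q, R$'' — is routine.
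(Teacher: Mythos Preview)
Your argument has genuine gaps in both directions, and the ``only if'' direction rests on a misreading of Lemma~\ref{L:moduli2}.

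\textbf{The ``if'' direction.} You assert that any $(E,P)$ can be written as $y^2 = c(x+1)(x^2+sx+1)$ with $P=(-1,0)$, but this is exactly where the hypothesis is needed and you have not used it. Put $E$ in the form $y^2 = x(x^2+dx+e)$ with $P=(0,0)$; to reach the normal form \eqref{EdefE} one must scale $x$ so that the linear and constant coefficients of the quadratic factor become related in the required way, and this scaling degenerates precisely when $d=0$. But $d=0$ means $E\colon y^2 = x^3+ex$, which has the order-$4$ automorphism $(x,y)\mapsto(-x,\zeta y)$ fixing $P=(0,0)$. So the normal form is achievable if and only if no such $\alpha$ exists --- this is the content of the ``if'' direction, and your sentence ``the `if' direction needs no hypothesis beyond $(E,P)$ being what it is'' is simply false. (Once you have the normal form with $s\ne\pm 2$, the curve $Z$ of \eqref{EZstwist} does give back $(E,P)$, not $(E,Q)$ or $(E,R)$: Proposition~\ref{prop:JacobianDecomposition} pins down the distinguished $2$-torsion point unambiguously.)

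\textbf{The ``only if'' direction.} Your proposed mechanism does not work. The second model in Lemma~\ref{L:moduli2} corresponds to swapping the roles of $\sigma$ and $\tau$ in the $D_4$-action; by Lemmas~\ref{lem:quot1} and~\ref{lem:quot2} its $\sigma$-quotient is the $2$-isogenous curve $E'$ with marked point $P'$, \emph{not} the same $E$ with a different $2$-torsion point. So the two models give $(E,P)$ and $(E',P')$, and no ``cyclic permutation of $P,Q,R$'' occurs. Relatedly, the collision case $s=-6$ corresponds to $E$ having $j=8000$ (CM by $\ZZ[\sqrt{-2}]$), not $j=1728$; see the remark following Proposition~\ref{P:modulispace}. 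Your identification of the obstruction with this collision is therefore off.

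The paper's argument for ``only if'' is conceptually different and cleaner: if $\alpha$ restricts to $\psi$ on $E[2]$, then the explicit isogeny $E\times E\to E\times E$, $(U,V)\mapsto(U+\alpha^{-1}V,\,V-\alpha U)$, has kernel equal to the graph of $\psi$ and pulls back the product polarization to twice itself. Hence $(E\times E)/\mathrm{graph}(\psi)$ with its induced principal polarization is isomorphic to $E\times E$ with the product polarization, which is never a Jacobian. Combined with Proposition~\ref{prop:JacobianDecomposition}, this rules out any $Z$ giving rise to $(E,P)$. No bookkeeping with models of $Z$ is needed.
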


\begin{proof}
Given an $E$ and a $P$ as in the statement of the proposition, we may choose a
model $y^2 = x(x^2 + ax + b)$ for $E$ so that $P$ is the point $(0,0)$. Let
$\psi\colon E[2]\to E[2]$ be the automorphism that fixes $P$ and swaps the other
two points of order $2$. If there is no geometric automorphism of $E$ that
restricts to $\psi$ on $E[2]$, then the construction 
of~\cite[Proposition~4, p.~324]{HoweLeprevostPoonen2000} produces a genus-$2$
curve $Z$ of the form~\eqref{EZstwist}, and we check that the $E$ and $P$
produced by this curve as above are the $E$ and $P$ we started with.

If there \emph{is} a geometric automorphism $\alpha$ of $E$ that restricts to
$\psi$, then the geometric isogeny $\varphi\colon E\times E \to E\times E$ that
takes $(U,V)$ to $(U + \alpha^{-1}(V), V-\alpha(U))$ has kernel equal to the
graph of $\psi$, and the pullback via $\varphi$ of the product polarization is
twice the product polarization. If there were a curve $Z$ that gave rise to
$(E,P)$, then by Proposition~\ref{prop:JacobianDecomposition} the polarized
Jacobian of $Z$ would be geometrically isomorphic to $E\times E$ with the 
product polarization, which is impossible. To complete the proof, we just need
to observe that over fields of characteristic not~$2$, every automorphism 
$\alpha\ne \pm 1$ of an elliptic curve that fixes one point of order $2$ 
necessarily swaps the other two.
\end{proof}

\subsection{Related families of genus-\texorpdfstring{$2$}{2} curves
            with \texorpdfstring{$D_4$}{D4}-action}

If $K$ is algebraically closed, Cardona and Quer
\cite[Proposition 2.1]{CardonaQuer2007} show that every genus-$2$ curve $Y$ with
$\Aut(Y)\cong D_4$ is a member of the family
\[
Y_v \colon \quad y^2 = x^5 + x^3 + vx,
\]
where $v\in K\setminus\{0, 1/4, 9/100\}$. 


The advantage of this family is that every geometric isomorphism class of a
curve with automorphism group $D_4$ corresponds to exactly one value of $v$, as
opposed to the family $Z$ in \eqref{EZstwist}. The disadvantage is that the
automorphisms of this curve are not necessarily defined over the field generated
by the parameter~$v$. 

Moonen \cite{Moonen2010} studied cyclic covers of ${\mathbb P}^1$ given by
monodromy data. One of the twenty families of curves that appear in his work is
the cyclic degree-$4$ cover of $\PP$ given by
\[
X_T \colon\quad  z^4 = x(x-1)^2(x-T)^2.
\]
The curve $X_T$ has  genus~$2$ and $\Aut_{\Kbar}(X_T) \cong D_4$ for a generic
choice of $T$. This model makes the order-$4$ automorphism very apparent, but 
the hyperelliptic structure is not as clearly visible.


\section{The 2-torsion and unramified elementary abelian 2-covers}

In this section, we study unramified elementary abelian 2-covers of the curves
$Z$ defined by \eqref{EZstwist}. Throughout this section, we assume all
Weierstrass points of $Z$ are defined over $K$.  Let $\zeta$ be a primitive
fourth root of unity in $K$. Then $Z$ can be given by
\begin{gather}
\label{EQ:D4withWeierstrass}
   Z\colon\quad y^2 = c(x-\zeta)(x+\zeta)(x-t)(x+t)(x-1/t)(x+1/t), 
\text{\quad where } \\
\label{Estformula}
   s = -(t^4+1)/t^2.
\end{gather}

\subsection{Unramified elementary abelian 2-covers} \label{ss:curvescovers}

Let $P \in Z(K)$ be the Weierstrass point $(\zeta,0)$. Since there is a 
$K$-rational automorphism of $Z$ taking $P$ to $(-\zeta, 0)$, the choice of
$\zeta$ does not affect the $K$-isomorphism class of the $2$-covers we 
construct. Note that $P$ and $(-\zeta, 0)$ are distinguished from the other
Weierstrass points of $Z$ by the fact that they form an orbit of size $2$ under
the action of $D_4$; the other Weierstrass points form an orbit of size~$4$.

\begin{definition}
Let $\iota_{P}\colon Z \hookrightarrow \Jac(Z)$ be the Abel-Jacobi embedding 
that sends $Q \in Z(\Kbar)$ to the divisor class $[Q - P]$. Let 
$\pi^{\mathbf{0}}\colon \Ztilde \to Z$ be the pullback of the 
multiplication-by-$2$ map on $\Jac(Z)$ by 
$\iota_P\colon Z \hookrightarrow \Jac(Z)$.
\end{definition}

Our assumption that the Weierstrass points of $Z$ are $K$-rational implies that
the cover $\Jac(Z)\to\Jac(Z)$ given by the multiplication-by-$2$ map is Galois,
with Galois group isomorphic to $\Jac(Z)[2]\cong (\ZZ / 2 \ZZ)^{4}$; the group 
$\Jac(Z)[2]$ acts on the cover by translation. Since $\pi^{\mathbf{0}}$ is 
defined as a pullback of this cover, $\pi^{\mathbf{0}}$ is also Galois, with 
Galois group canonically isomorphic to $\Jac(Z)[2]$. In fact, geometric class
field theory shows that we can recognize $\pi^{\mathbf{0}}$ as the maximal
unramified abelian extension of $Z$ with Galois group of exponent $2$ in which
the base point $P = (\zeta,0)$ splits completely.

\begin{definition} \label{DpiH}
For a subgroup $H$ of $\Jac(Z)[2]$, let $\Ztilde^H$ be the quotient of $\Ztilde$
by $H$. Let $\pi^H\colon \Ztilde^H \to Z$ be the quotient cover.
\end{definition}

For example, $\Ztilde^{\mathbf{0}} = \Ztilde$ and $\Ztilde^{\Jac(Z)[2]} = Z$.
More generally, the degree of $\pi^H$ equals the index of $H$ in $\Jac(Z)[2]$.
Since $\Jac(Z)[2]$ is abelian, $\pi^H$ is Galois with Galois group isomorphic to
$\Jac(Z)[2] /H$. Furthermore, the genus of $\Ztilde^{H}$ equals
$[\Jac(Z)[2]:H]+1$ by the Riemann--Hurwitz formula. 

\begin{remark}
\label{rmk:DifferentBasepoint}
If we pick a different basepoint $P'$ instead of $P$ and keep track of the
basepoint dependence by labeling the 2-covers as $\Ztilde_{P}$ and
$\Ztilde_{P'}$, and if we let $Q\in\Jac(Z)(\Kbar)$ be a point with 
$2Q = P - P'$, then translation by $Q$ on $\Jac(Z)(\Kbar)$ yields a geometric
isomorphism from $\Ztilde_{P}$ to $\Ztilde_{P'}$. We will prove in the following
paragraph that there exists an elementary abelian 2-extension $L$ of $K$ of
degree at most $2^{4}$ such that $Q\in \Jac(Z)(L)$, so this translation
isomorphism will be defined over $L$. In particular, when $K$ is a finite field,
then $L$ is at worst a quadratic extension of $K$, and the curve $\Ztilde_{P'}$
is a (possibly trivial) quadratic twist of $\Ztilde_{P}$. From now on, we fix
the base point to be $P=(\zeta,0)$ for all $\pi^H$.

The following argument was provided by Bjorn Poonen. The obstruction to dividing
a point of $\Jac(Z)(K)$ by 2 lies in $H^{1}(K, \Jac(Z)[2])$. Since all the
Weierstrass points are defined over $K$, we know that
$\Jac(Z)[2] \cong \mu_{2}^{4}$ as a Galois module. Hence, the obstruction to
dividing $P' - P$ by $2$ lies in  
$H^{1}(K, \Jac(Z)[2]) \cong (H^1(K, \mu_{2}))^{4} 
   = (K^\times / K^{\times 2})^{4}$,
so there exists an elementary abelian 2-extension $L / K$ of degree at most
$2^{4}$ such that the image of this class in 
$H^1(L, \Jac(Z)[2]) = (L^\times / L^{\times 2})^{4}$ is trivial. When $K$ is a
finite field of characteristic not $2$, over its unique elementary abelian
$2$-extension $L$, this obstruction class vanishes. Thus $\Ztilde_{P'}$ is a
quadratic twist of $\Ztilde_{P}$.
\end{remark}

\subsection{Decomposition of the Jacobian}

In this section, we determine the isogeny decomposition of $\Jac(Z^H)$ over $K$.

\begin{definition}
Given a cover $\pi\colon V \to Z$, let $\Prym^\pi$ denote the Prym variety of
$\pi$, that is, the identity component of the kernel of the induced norm
homomorphism $\Jac(V) \to \Jac(Z)$. There is a $K$-isogeny
$\Jac(V) \sim \Jac(Z) \times \Prym^\pi.$
\end{definition}

\begin{definition}
\label{def:PrymH}
For a subgroup $H$ of $\Jac(Z)[2]$, we set $\Prym^H \colonequals \Prym^{\pi^H}$,
where $\pi^H$ is the cover defined in Definition~\ref{DpiH}.
\end{definition}

\begin{proposition} \label{prop:decomposecover}
Let $Z$ be a genus-$2$ curve with $D_4$-action whose Weierstrass points are
defined over $K$. For every $H \subseteq \Jac(Z)[2]$, there is an isogeny
\begin{equation}
\label{JacDecomposePrym}
\Jac(\Ztilde^H) \sim E^2 \times 
\prod_{\stackrel{H \subseteq H' \subseteq \Jac(Z)[2]}
                {\scriptscriptstyle [\Jac(Z)[2] : H'] =2}} \Prym^{H'}.
\end{equation}
\end{proposition}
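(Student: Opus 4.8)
The plan is to reduce the statement to the standard theory of unramified abelian covers and the Kani--Rosen decomposition theorem, exploiting the fact that $\pi^{\mathbf 0}\colon \Ztilde \to Z$ is Galois with group $\Jac(Z)[2]$. First I would recall that, because $\pi^H$ is Galois with group $G_H \colonequals \Jac(Z)[2]/H$, which is an elementary abelian $2$-group, the cover $\Ztilde^H \to Z$ is the compositum (fiber product over $Z$) of all the intermediate $\ZZ/2\ZZ$-covers it dominates; equivalently, $\Ztilde^H \to Z$ factors through the quotients $\Ztilde^{H'} \to Z$ for the index-$2$ subgroups $H'$ with $H \subseteq H' \subseteq \Jac(Z)[2]$, and these generate $\Ztilde^H$. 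This is just the Galois correspondence applied to the lattice of subgroups of the finite abelian group $G_H$ of exponent $2$: the index-$2$ subgroups of $\Jac(Z)[2]$ containing $H$ correspond to the order-$2$ subgroups (equivalently, the hyperplanes through $H/H$, i.e.\ codimension-$1$ subspaces) of $G_H$ viewed as an $\FF_2$-vector space.

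The key step is then to invoke a Kani--Rosen-type isogeny decomposition for Jacobians of curves with a Galois group of the form $(\ZZ/2\ZZ)^r$ acting. For such a cover $\Ztilde^H \to Z$ with group $V \cong (\ZZ/2\ZZ)^r$ (here $r$ is the $\FF_2$-dimension of $G_H$), Kani--Rosen (or the idempotent-relation machinery of \cite{KaniRosen1989}) gives an isogeny
\begin{equation*}
\Jac(\Ztilde^H) \sim \Jac(Z) \times \prod_{\chi \ne 1} \Prym^{\pi_\chi},
\end{equation*}
where $\chi$ runs over the nontrivial characters of $V$ and $\pi_\chi$ is the quotient of $\Ztilde^H$ by the kernel of $\chi$ — but since $\ker\chi$ has index $2$, each such $\pi_\chi$ is exactly a $\ZZ/2\ZZ$-cover $\Ztilde^{H'} \to Z$ with $[\Jac(Z)[2]:H']=2$, and its Prym is $\Prym^{H'}$ by Definition~\ref{def:PrymH}. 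Counting: the nontrivial characters of $V$ are in bijection with the index-$2$ subgroups $H'$ with $H \subseteq H' \subseteq \Jac(Z)[2]$, so the product is exactly $\prod_{H'}\Prym^{H'}$. Finally, substituting the isogeny $\Jac(Z)\sim E^2$ from Lemma~\ref{lem:quot1} converts $\Jac(Z)$ into $E^2$, giving \eqref{JacDecomposePrym}.

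The main obstacle is making the application of Kani--Rosen rigorous over the base field $K$ rather than just geometrically, and pinning down the precise idempotent relation in the group ring $\QQ[V]$ that yields the stated product with no repeated factors and no leftover correction terms. Concretely, one must verify that in $\QQ[(\ZZ/2\ZZ)^r]$ one has the relation
\begin{equation*}
2^{r-1}\!\left(\, (r-1)\cdot e_{\mathrm{triv}} + \sum_{W} e_W \,\right) = 2^{r-1}\,\mathrm{id} \;+\; (\text{terms pairing correctly}),
\end{equation*}
or more cleanly that $\sum_{W} (\mathbf 1 - e_W) = \mathbf 1 - e_{\mathrm{triv}}$ as operators up to isogeny, where $W$ ranges over index-$2$ subgroups and $e_W$ is the idempotent projecting onto $W$-invariants; this is the linear-algebra identity that the characters of $V$ are the disjoint union, over index-$2$ subgroups $W$, of the nontrivial characters trivial on $W$, each appearing once, plus the trivial character once. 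Since all the relevant subgroups of $\Jac(Z)[2]$ are defined over $K$ (the Weierstrass points, hence all of $\Jac(Z)[2]$, are $K$-rational by hypothesis), the quotient curves $\Ztilde^{H'}$ and the Prym varieties $\Prym^{H'}$ are defined over $K$, so the Kani--Rosen isogeny is a $K$-isogeny. I would handle the $r \le 1$ edge cases ($H = \Jac(Z)[2]$, where $\Ztilde^H = Z$ and the product is empty, and $[\Jac(Z)[2]:H]=2$, where there is a single factor $\Prym^H$) separately as immediate consequences of the definitions and Lemma~\ref{lem:quot1}.
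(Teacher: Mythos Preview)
Your proposal is correct and rests on the same tool as the paper (Kani--Rosen), but the executions differ in flavor. The paper applies \cite[Theorem~C]{KaniRosen1989} head-on: it lists the subgroups $H, H'_1,\ldots,H'_{r-1}, G$ (with $r=[G:H]$) together with explicit integer weights $1,-1,\ldots,-1,r-2$, verifies the genus condition $\sum_i n_i g_{ij}=0$ by a small Riemann--Hurwitz case analysis, obtains $\Jac(\Ztilde^H)\times\Jac(Z)^{r-2}\sim\prod_{H'}\Jac(\Ztilde^{H'})$, and then cancels copies of $\Jac(Z)$. You instead read off the decomposition character-theoretically: the nontrivial characters of $V=G_H$ are in bijection with the index-$2$ subgroups $H'\supseteq H$, and the $\chi$-isotypic piece of $\Jac(\Ztilde^H)$ is (isogenous to) $\Prym^{H'}$. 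This is cleaner and avoids the genus bookkeeping, though it is the same idempotent relation underneath; the paper's weight vector is precisely what one gets by clearing denominators in that relation.

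One small slip worth flagging: the identity you write in the obstacle paragraph, $\sum_W(\mathbf 1-e_W)=\mathbf 1-e_{\mathrm{triv}}$, is off by a scalar. Since $e_W=e_{\mathrm{triv}}+e_{\chi_W}$ for each index-$2$ subgroup $W$, one has $\sum_W(\mathbf 1-e_W)=(\lvert V\rvert-2)(\mathbf 1-e_{\mathrm{triv}})$; the correct relation (and the one matching the paper's weights) is $\mathbf 1+(\lvert V\rvert-2)\,e_{\mathrm{triv}}=\sum_W e_W$. This does not affect your argument, because you actually use the bijection $\chi\leftrightarrow\ker\chi$ rather than that identity, but you should fix the displayed formula if you keep it.
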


\begin{proof}
Let $G = \Jac(Z)[2]$ and $r = [G:H]$, and enumerate the index-$2$ subgroups of
$G$ containing $H$ by $H_1', \cdots, H_{r - 1}'$. We apply 
\cite[Theorem C]{KaniRosen1989} to the $\{H_i'\}$ together with $H$ and $G$.
More precisely, we define 
\[
H_{i} \colonequals \begin{cases}
                   H_{i}' &\text{for } i = 1, \ldots r - 1, \\
                   H &\text{for } i = r, \\
                   G &\text{for } i = r + 1,
                   \end{cases} 
\quad \text{and} \quad 
n_{i} \colonequals \begin{cases}
                   -1     &\text{for } i = 1,\ldots r - 1, \\
                    1     &\text{for } i = r, \\
                    r - 2 &\text{for } i = r + 1.
\end{cases}
\]
Let $g_{i j}$ be the genus of $Z^{H_{i} H_{j}}$. The group $H_{i} H_{j}$ must be
one of $H_{1}', \cdots, H_{r - 1}', H, G$.  We know from Riemann--Hurwitz that
the genus of $\Ztilde^{H}$ is $r + 1$, the genus of each $\Ztilde^{H_{i}'}$ 
is~$3$, and the genus of $\Ztilde^{G}$ is $2$. Using this information and some
casework, we see that
\[
g_{i j} = \begin{cases}
         2     &\text{if } i \neq j \text{ and } i, j \le r - 1, \\
         3     &\text{if } i = j \le r - 1, \\
         3     &\text{if } i \le r - 1\text{ and } j = r\text{, or }
                           i = r\text{ and }j \le r - 1, \\
         r + 1 &\text{if } i = j = r, \\
         2     &\text{if } i\text{ or }j \text{ is } r + 1. \\
\end{cases}
\]
We check the conditions to apply \cite[Theorem C]{KaniRosen1989}: first, 
$H_{i} H_{j} = H_{j} H_{i}$ is satisfied because $G$ is abelian; and second, 
$\sum_i n_{i} g_{i j} = 0$ for all $j \in \{1,\ldots,r+1\}$ by our computations
above. Therefore the conclusion of \cite[Theorem C]{KaniRosen1989} holds, 
namely, there exists a $K$-isogeny
\[
\prod_{n_i > 0} \Jac(\Ztilde^{H_{i}})^{n_{i}} 
     \sim \prod_{n_{j} < 0} \Jac(\Ztilde^{H_{j}})^{-n_{j}},
\]
which for us becomes
\[
\Jac(\Ztilde^{H}) \times (\Jac(Z))^{r - 2} 
     \sim \prod_{\stackrel{H \subseteq H' \subseteq \Jac(Z)[2]}
                { \scriptscriptstyle [\Jac(Z)[2] : H'] =2}} \Jac(\Ztilde^{H'}).
\]
Now we substitute $\Jac(\Ztilde^{H'}) \sim \Jac(Z) \times \Prym^{H'}$, cancel
$(\Jac(Z))^{r - 2}$ from both sides, and substitute $\Jac(Z) \sim E^2$ (from 
Lemma \ref{lem:quot1}) to finish.
\end{proof}

\subsection{The Weil pairing}
\label{ssec:WeilPairing}

\begin{definition} \label{Dweier}
Let $R = \{\zeta,-\zeta,t,-t,1/t,-1/t\}$. For $r \in R$, let $W_{r}$ denote the
Weierstrass point $(r,0)$ of $Z$.
\end{definition}

\begin{lemma}
If $D \in \Jac(Z)[2]$, then there exist $u, v\in R$ such that $D = [W_u - W_v]$. 
\end{lemma}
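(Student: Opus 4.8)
The plan is to describe $\Jac(Z)[2]$ concretely in terms of differences of Weierstrass points, using the standard description of the $2$-torsion of a hyperelliptic Jacobian.

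Recall that for a hyperelliptic curve of genus $g$ with $2g+2$ rational Weierstrass points $W_{r}$, every $2$-torsion class of the Jacobian is represented by a divisor of the form $\sum_{r \in S} W_{r} - (\#S) W_{r_{0}}$ for some subset $S$ of even cardinality, where $r_{0}$ is any fixed Weierstrass point; moreover two such subsets give the same class if and only if they are equal or complementary, so $\Jac(Z)[2]$ is identified with the quotient of the group of even-size subsets of $R$ (under symmetric difference) by the subgroup $\{\emptyset, R\}$. In our genus-$2$ case $\#R = 6$, and the even subsets have size $0, 2, 4$, or $6$. Under the complementation identification, every nonzero class is represented by a subset of size exactly $2$: a size-$4$ subset is complementary to a size-$2$ subset, and the size-$0$ and size-$6$ subsets represent the identity. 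So every nonzero $D \in \Jac(Z)[2]$ has the form $[W_u + W_v - 2W_r]$ for some $r$ and some two-element subset $\{u,v\} \subseteq R$.

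The remaining step is to rewrite $[W_u + W_v - 2W_r]$ as $[W_u - W_v]$. This uses the fact that $2W_v \sim 2W_r$ for any two Weierstrass points, since each is a fiber of the degree-$2$ hyperelliptic map $Z \to \PP$ and hence linearly equivalent to the others (they are all in the class of the hyperelliptic pencil). Therefore $[W_u + W_v - 2W_r] = [W_u + W_v - 2W_v] = [W_u - W_v]$, which is the desired form. It is also worth noting that $[W_u - W_v]$ is indeed $2$-torsion: $2[W_u - W_v] = [2W_u - 2W_v] = 0$ by the same pencil relation, consistent with the claim.

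I do not expect any serious obstacle here; the only thing to be careful about is the bookkeeping in identifying $\Jac(Z)[2]$ with the even-subset group modulo complementation (one may either cite a standard reference for hyperelliptic $2$-torsion, e.g.\ Mumford, or give the short direct argument that the $15$ classes $[W_u - W_v]$ with $u \ne v$, modulo the relation $[W_u - W_v] = -[W_v - W_u] = [W_v - W_u]$ from $2$-torsion plus the relation coming from the pencil, exhaust a group of order $16 = \#\Jac(Z)[2]$). Since the statement of the lemma only asserts existence of $u, v$, it suffices to produce the representative $[W_u - W_v]$ for each of the $16$ elements, so even a counting argument showing these $16$ classes are distinct (including $[W_u - W_u] = 0$) completes the proof.
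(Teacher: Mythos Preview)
Your proof is correct and takes essentially the same approach as the paper. The paper's version is terser---it simply states that $\Jac(Z)[2]$ is generated by the classes $[W_r - W_\zeta]$ for $r\in R\setminus\{\zeta\}$ subject to the single relation $\sum_r [W_r - W_\zeta]=0$ and then declares the rest a ``straightforward computation''---but unpacking that computation amounts to exactly the even-subset/complementation description and the pencil relation $2W_v\sim 2W_r$ that you spell out.
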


\begin{proof}
We know $\Jac(Z)[2]$ is generated by $[W_{r} - W_{\zeta}]$ for
$r \in R \setminus \{\zeta\}$ with the single relation 
$\sum_{r} [W_{r} - W_{\zeta}]=0$. The conclusion follows from a straightforward
computation.
\end{proof}

\begin{definition}
Let $e_{2}(\cdot, \cdot)$ denote the Weil pairing on $\Jac(Z)[2]$, which takes
values in $\{\pm 1\}\subset K^\times$. For every subgroup $H$ of $\Jac(Z)[2]$, 
define $H^\perp$ by
\[
H^\perp \colonequals 
        \{ S \in \Jac(Z)[2]\ :\ e_2(Q, S) = 1 \text{ for all } Q \in H \}.
\]
\end{definition}

For later use, we give an explicit description of the Weil pairing on
$2$-torsion points.

\begin{lemma} \label{lem:explicitweil}
For nonzero elements $[W_{u_1} - W_{v_1}]$ and $[W_{u_2} - W_{v_2}]$ of
$\Jac(Z)[2](K)$, we have $e_2( [W_{u_1} - W_{v_1}], [W_{u_2} - W_{v_2}]  ) = -1$
if and only if $\# (\{u_1,v_1\} \cap \{u_2,v_2\}) = 1$.
\end{lemma}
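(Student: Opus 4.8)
The plan is to identify the $2$-torsion of $\Jac(Z)$ with an explicit combinatorial group, and then deduce the Weil pairing formula from the well-known description of the Weil pairing on the $2$-torsion of a hyperelliptic Jacobian in terms of partitions of the set of Weierstrass points. Concretely, for a genus-$2$ curve $y^2 = f(x)$ with $f$ of degree $6$ having root set $R$ (here $|R|=6$), the group $\Jac(Z)[2]$ is canonically isomorphic to the group of even-cardinality subsets of $R$ modulo complementation, where a difference $[W_u - W_v]$ of Weierstrass points corresponds to the two-element set $\{u,v\}$, and addition is symmetric difference followed by (if needed) complementation to land in an even-size class of size $\le 3$. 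This identification is standard (it goes back to Mumford, and is spelled out in, e.g., work on hyperelliptic curves); I would cite such a reference rather than reprove it. Under this identification the Weil pairing $e_2$ is given by the mod-$2$ intersection pairing on subsets: $e_2(S,T) = (-1)^{|S \cap T|}$ for representative subsets $S,T$, with the parity being independent of the choice of representatives among a class and its complement precisely because all sets involved have even cardinality.

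With that machinery in place, the proof is a short computation. First I would recall the previous lemma, which says every element of $\Jac(Z)[2]$ has a representative $[W_u - W_v]$ with $u,v \in R$, i.e.\ a two-element subset $\{u,v\} \subseteq R$; the trivial element corresponds to the empty set (equivalently all of $R$). Given nonzero classes represented by two-element sets $\{u_1,v_1\}$ and $\{u_2,v_2\}$, I would apply the formula $e_2 = (-1)^{|\{u_1,v_1\} \cap \{u_2,v_2\}|}$. The intersection of two two-element sets has cardinality $0$, $1$, or $2$; cardinality $2$ means the sets are equal, which would force the two classes to be equal, contradicting the hypothesis that they are nonzero \emph{and} (implicitly, since we are asking when the pairing is $-1$) that we are in a nondegenerate situation — more carefully, if the two classes are equal then $e_2$ of a $2$-torsion class with itself is $+1$, consistent with $(-1)^2 = 1$. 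So the pairing is $-1$ exactly when the intersection has odd cardinality, which for two-element sets means cardinality exactly $1$. That is the claimed statement.

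The one genuine subtlety to address — and the step I expect to require the most care — is independence of the representative. A nonzero class in $\Jac(Z)[2]$ may be written as $[W_u - W_v]$ for several pairs $\{u,v\}$: indeed $[W_u - W_v] = [W_{u'} - W_{v'}]$ whenever $\{u,v,u',v'\}$ has even size and its complement in $R$ is also accounted for by the relation $\sum_{r \in R}[W_r - W_\zeta] = 0$; concretely $[W_u - W_v] = [W_{r_1} - W_{r_2}] + \cdots$ where $\{u,v\}$ and its symmetric-difference partners describe the same coset. I would verify that the parity $|\{u_1,v_1\} \cap \{u_2,v_2\}| \bmod 2$ is unchanged when either two-element set is replaced by another representative of the same class; this follows formally from the fact that the intersection pairing on even subsets of $R$ descends to the quotient by complementation (since $|S \cap T^c| = |S| - |S\cap T| \equiv |S \cap T| \bmod 2$ when $|S|$ is even), together with the identification of $\Jac(Z)[2]$ with that quotient. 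Alternatively, if one wishes to avoid invoking the general hyperelliptic Weil-pairing formula, one can compute $e_2$ directly from the definition using the functions $x - u$ on $Z$: the divisor of $x - u$ is $2W_u - 2W_\infty$-type (after clearing the leading coefficient), and $e_2([W_{u_1}-W_{v_1}], [W_{u_2}-W_{v_2}])$ unwinds to a product of four evaluations of the form $(u_i - u_j)/(u_i - v_j)$, whose sign one reads off by counting coincidences among the four points; this is a direct but slightly tedious verification, and I would present whichever version is cleanest, most likely the citation-based one.
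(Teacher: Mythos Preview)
Your proposal is correct and takes essentially the same approach as the paper: both invoke a standard formula for the Weil pairing on $2$-torsion (the paper cites \cite[Theorem~1]{Howe1996}, you cite the Mumford-style combinatorial description via even subsets of the Weierstrass locus) and then read off the answer for two-element sets. One minor simplification: your discussion of representative independence is unnecessary here, since for a genus-$2$ curve with six Weierstrass points there are exactly $\binom{6}{2}=15$ two-element subsets and exactly $15$ nonzero $2$-torsion classes, so each nonzero class has a \emph{unique} representative $\{u,v\}$ and the well-definedness check is vacuous.
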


\begin{proof}
This is a direct calculation using a well-known formula for the Weil pairing
(see \cite[Theorem 1]{Howe1996}).  
\end{proof}

\begin{proposition}
\label{PropPrymJacobianElliptic}
Let $H'$ be an index-$2$ subgroup of $\Jac(Z)[2]$ and let $U = [W_u-W_v]$ be the
generator of $(H')^{\perp}$. Define $a\in K^\times/K^{\times 2}$ by
\[ a \colonequals \begin{cases}
  c (\zeta - u)(\zeta - v) & \text{if $\zeta\not\in\{u,v\}$}\\
  \prod_{r\in R\setminus\{u,v\}} (\zeta - r) & \text{if $\zeta\in\{u,v\}$}.
  \end{cases}
\]
Let $E'$ be the genus-$1$ curve given by the equation 
$y^{2} = a \prod_{r\in R\setminus\{u,v\}} (x - r).$ Then there is a $K$-isogeny
$\Prym^{H'} \sim \Jac(E').$
\end{proposition}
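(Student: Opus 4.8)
The plan is to identify the cover $\pi^{H'}\colon \Ztilde^{H'}\to Z$ explicitly and compute its Prym variety directly, rather than appealing to the general decomposition of Proposition~\ref{prop:decomposecover}. Since $H'$ has index $2$ in $\Jac(Z)[2]$, the cover $\pi^{H'}$ is a connected unramified double cover of $Z$, and the genus of $\Ztilde^{H'}$ is $3$, so $\Prym^{H'}$ is an abelian variety of dimension $1$; the content of the proposition is to pin down which elliptic curve it is, including the correct quadratic twist. The key observation is that unramified double covers of a hyperelliptic curve are classified by their branch data on the quotient $\PP$: a double cover of $Z\colon y^2 = c\prod_{r\in R}(x-r)$ corresponding to a $2$-torsion class $[W_u-W_v]\in\Jac(Z)[2]$ is the fiber product of $Z$ with the double cover of $\PP$ branched at $\{u,v\}$, and the Galois group of $\pi^{H'}$ is $\Jac(Z)[2]/H'$, dual to $(H')^\perp=\langle U\rangle$ where $U=[W_u-W_v]$.

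First I would make the identification of $\Ztilde^{H'}$ concrete. Writing $w$ for a square root of $(x-u)(x-v)$ (or of $\prod_{r\in R\setminus\{u,v\}}(x-r)$ in the case $\zeta\in\{u,v\}$, since $[W_u-W_v]$ equals $\sum_{r\notin\{u,v\}}[W_r-W_\zeta]$ up to the relation and one should use whichever representative is convenient), the curve $\Ztilde^{H'}$ is the smooth model of the fiber product, which carries the two obvious quotient maps: one to $Z$ (forgetting $w$) and one to the hyperelliptic curve $y^2 = c\,(\text{factor})\cdot w^2$, i.e.\ to a curve isomorphic to $E'\colon y'^2 = a\prod_{r\in R\setminus\{u,v\}}(x-r)$ after clearing the square factor. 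The scalar $a$ is exactly what bookkeeping of the substitution forces: pulling the equation $y^2 = c\prod_{r\in R}(x-r)$ back and dividing by $(x-u)(x-v)$ leaves $c\prod_{r\notin\{u,v\}}(x-r)$, which is the "complementary" genus-$1$ curve, but the choice of normalization of the cover (the requirement that the basepoint $P=(\zeta,0)$ split) rigidifies the twist to the stated value of $a$. I would verify the two cases of the formula for $a$ by direct substitution, checking that $y'^2 = a\prod(x-r)$ is a valid model (i.e.\ that $a$ lies in the stated square class) precisely because of the $K$-rationality of all Weierstrass points.

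Next I would show $\Prym^{H'}\sim \Jac(E')$. The map $\Ztilde^{H'}\to Z$ is the double cover, and the complementary map $\Ztilde^{H'}\to E'$ realizes $\Jac(E')$ as an isogeny factor of $\Jac(\Ztilde^{H'})$; applying \cite[Theorem~C]{KaniRosen1989} (or simply the standard fact that for an unramified double cover the Prym is the part of the Jacobian on which the deck involution acts by $-1$, together with the observation that $E'$ is the quotient by that involution composed appropriately) gives $\Jac(\Ztilde^{H'})\sim \Jac(Z)\times\Jac(E')$, hence $\Prym^{H'}\sim\Jac(E')$ by the defining property of the Prym. Alternatively, and perhaps more cleanly, I would exhibit $\Ztilde^{H'}$ as having exactly the three intermediate quotients $Z$, $E'$, and a third genus-$1$ curve, apply Kani--Rosen to that configuration, and match with Proposition~\ref{prop:decomposecover} to isolate the single Prym factor.

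The main obstacle is getting the quadratic twist exactly right: it is easy to see geometrically that $\Prym^{H'}$ is isogenous to \emph{some} twist of the complementary elliptic curve $y'^2 = \prod_{r\in R\setminus\{u,v\}}(x-r)$, but the specific scalar $a$ depends on the normalization of the double cover $\pi^{H'}$ forced by the basepoint $P$ splitting completely, and tracking this through the fiber-product construction and the smooth-model normalization is where the real care is needed. Concretely, the twisting constant arises as the leading coefficient one must introduce so that the function whose square root cuts out $\pi^{H'}$ has the right value at $P$; I expect the cleanest route is to write down a rational function $f$ on $Z$ with $\mathrm{div}(f)=2D$ for $D$ a representative of the class $U$ (using $f = (x-u)(x-v)/(\text{norm to make }f(P) \text{ a square})$ or the analogous expression in the other case), observe $\Ztilde^{H'} = Z(\sqrt{f})$, and read off $a = c(\zeta-u)(\zeta-v)$ (resp.\ $a=\prod_{r\notin\{u,v\}}(\zeta-r)$) as the value $f(P)$ up to squares, after which the curve $E' = Z/\langle\text{the involution}\rangle$ has exactly the stated equation. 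The remaining steps — that this $E'$ really is a genus-$1$ curve and that its Jacobian is the Prym — are then routine.
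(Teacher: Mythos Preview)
Your approach is essentially the paper's: build the $V_4$-diagram over $\PP$ with intermediate curves $Z$, $E'$, and a conic, use the basepoint condition to pin down the twist~$a$, and apply Kani--Rosen to read off $\Jac(Y)\sim\Jac(Z)\times\Jac(E')$. Two points deserve comment.

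First, you gloss over the step that carries most of the weight in the paper's argument. You assert that the double cover of $Z$ ``corresponding to'' $U=[W_u-W_v]$ is obtained by adjoining a square root of (a twist of) $(x-u)(x-v)$, and that this is $\pi^{H'}$ for $H'=\langle U\rangle^\perp$. But $\pi^{H'}$ is \emph{defined} as the quotient of $\Ztilde$ by the subgroup $H'$ acting through the canonical identification $\Gal(\Ztilde/Z)\cong\Jac(Z)[2]$; to match this with the Kummer-theoretic description you need to know that the pairing
\[
\Jac(Z)[2]\times\Jac(Z)[2]\longrightarrow\{\pm1\},
\qquad (V,U)\longmapsto\text{(value of $\sqrt{f_U}$ under the action of $V$)}
\]
coincides with the Weil pairing $e_2$. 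This is exactly what the paper proves, invoking the explicit description of the $m$-torsion pairing from \cite{Milne1986av} and the identification of the pullback of the Abel--Jacobi map with $-\lambda^{-1}$, citing \cite{Howe1996}. Without this identification you do not know that your fiber-product cover is $\pi^{H'}$ rather than $\pi^{H''}$ for some other index-$2$ subgroup.

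Second, a small slip: in your ``alternatively'' clause you describe the three intermediate quotients of $\Ztilde^{H'}$ as $Z$, $E'$, and ``a third genus-$1$ curve''. The third quotient is the conic $y^2=f_0$ with $f_0$ of degree~$2$, hence has genus~$0$; this is what makes the Kani--Rosen bookkeeping come out to exactly $\Jac(Z)\times\Jac(E')$ with no extra factor.
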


\begin{proof}
Let $f_0 = ac(x-u)(x-v)$ and $f_1 = a \prod_{r\in R\setminus\{u,v\}} (x - r)$,
and consider the $V_4$-diagram of function fields
\[
\xymatrix@=2ex{
&&K(x,\sqrt{f_0},\sqrt{f_1}) \ar@{-}[lld]\ar@{-}[rrd]\ar@{-}[d]&&\\
K(x,\sqrt{f_0})\ar@{-}[rrd] &&K(x,\sqrt{f_0 f_1})\ar@{-}[d] 
                            &&K(x,\sqrt{f_1})\ar@{-}[lld]\\
&&K(x)\rlap{.}&&
}
\]
If we let $C$ be the genus-$0$ curve $y^2 = f_0$, the diagram above gives us a
$V_4$-diagram of curves
\begin{equation}
\label{EQ:V4curves}
\xymatrix@=2ex{
&&Y \ar[lld]\ar[rrd]\ar[d]&&\\
C\ar[rrd] &&Z\ar[d] &&E'\ar[lld]\\
&&\PP\rlap{,}&&
}
\end{equation}
where $Y$ is the curve with function field $K(x,\sqrt{f_0},\sqrt{f_1})$. The 
value of $a$ was chosen so that the point $x=\zeta$ of $\PP$ splits in one of
the extensions $C\to\PP$ and $E'\to\PP$ and ramifies in the other, and it
follows that the Weierstrass point $P = (\zeta,0)$ of $Z$ splits in the
quadratic extension $Y\to Z$. Since $Y\to Z$ is a Galois extension with group of
exponent $2$ in which $P$ splits completely, it must be a subextension of
$\Ztilde\to Z$, which as we noted earlier is the maximal such extension. This
tells us that the element $f_0$ of $K(Z)^\times$ is a square in
$K(\Ztilde)^\times$.

In fact, we see that the map $U\mapsto f_0$ defines an injective homomorphism
$\gamma\colon\Jac(Z)[2]\to 
  (K(Z)^\times \cap K(\Ztilde)^{\times 2})/K(Z)^{\times 2}$.
If we let $G$ be the latter group, then Kummer theory says that there is a
perfect pairing 
\[
\Gal(\Ztilde/Z) \times G  \to \{\pm 1\}\subset K^\times.
\]
In particular, $\#G = \#\Gal(\Ztilde/Z) = \#\Jac(Z)[2]$, so the injective
homomorphism $\gamma$ is an isomorphism. This isomorphism, together with the
canonical isomorphism $\Jac(Z)[2]\cong\Gal(\Ztilde/Z)$, turns the Kummer
pairing into a perfect pairing
\[
\Jac(Z)[2] \times \Jac(Z)[2] \to \{\pm 1\}.
\]
As is observed in~\cite[\S2]{Howe1996}, this pairing is in fact the Weil
pairing; this can be seen by using the explicit formula for the natural pairing
of the $m$-torsion of an abelian variety with that of its 
dual~\cite[\S16]{Milne1986av} and the fact that the pullback of the Abel--Jacobi
map $Z\to\Jac(Z)$ is equal to $-\lambda^{-1}\colon \hat{\Jac(Z)} \to \Jac(Z)$, 
where $\lambda\colon \Jac(Z)\to\hat{\Jac(Z)}$ is the canonical polarization on
$\Jac(Z)$~\cite[Lemma~6.9 and Remark~6.10(c)]{Milne1986jv}.

From this we conclude the cover $Y\to Z$ is $\pi^{H'}$. Furthermore, we see
from Diagram~\eqref{EQ:V4curves} and~\cite[Theorem C]{KaniRosen1989} that
$\Jac(Y)\sim\Jac(Z)\times\Jac(E')$, so $\Prym^{H'}\sim\Jac(E').$
\end{proof}

\subsection{The \texorpdfstring{$D_4$}{D4}-action on the factors of
            \texorpdfstring{$\Jac(\Ztilde)$}{Jac(Z~)}}
\label{ss:orbits}

Applying Proposition \ref{PropPrymJacobianElliptic} to the fifteen index-$2$
subgroups of $\Jac(Z)[2]$ yields fifteen elliptic curves. Our notation for these
curves unfortunately depends on the value of $t\in K$ used in the defining
equation~\eqref{EQ:D4withWeierstrass} for $Z$; in the next subsection we will 
see what happens when we choose a different value of $t$ that defines a curve
isomorphic to $Z$.

\begin{definition}
\label{D:EllipticPryms}
Given a nonzero $U\in\Jac(Z)[2]$, let $E_U$ be the elliptic curve obtained by
applying Proposition~\ref{PropPrymJacobianElliptic} to the index-$2$ subgroup
$\langle U\rangle^\perp$. If $u$ and $v$ are the unique elements of $R$ such
that $U$ is equal to the divisor class $[W_u - W_v] = [W_v - W_u]$, we also
write $E_{\{u,v\}}$ for $E_U$.
\end{definition}

\begin{corollary}
\label{cor:jacstr}
There is a $K$-isogeny
\[
\Jac(\Ztilde) \sim E^2 \times \prod_{U} E_U,
\]
where the product is over nonzero $U\in \Jac(Z)[2].$
\end{corollary}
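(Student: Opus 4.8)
The plan is to combine Proposition~\ref{prop:decomposecover} (with $H = \mathbf{0}$) with Proposition~\ref{PropPrymJacobianElliptic}. Proposition~\ref{prop:decomposecover} applied to the trivial subgroup $H = \mathbf{0}$ of $\Jac(Z)[2]$ gives a $K$-isogeny
\[
\Jac(\Ztilde) \sim E^2 \times \prod_{[\Jac(Z)[2]:H']=2} \Prym^{H'},
\]
where the product runs over all fifteen index-$2$ subgroups $H'$ of $\Jac(Z)[2]$, since every such $H'$ automatically contains $\mathbf{0}$.

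Next I would identify the indexing sets. For a finite $\FF_2$-vector space $V$ equipped with a perfect (Weil) pairing, the map $U \mapsto \langle U\rangle^\perp$ is a bijection between the nonzero elements of $V$ and the index-$2$ subgroups of $V$: each index-$2$ subgroup is the kernel of a unique nonzero linear functional $V \to \FF_2$, and the perfect pairing identifies nonzero functionals with nonzero elements $U$ via $S \mapsto e_2(U,S)$, with $\langle U \rangle^\perp$ the corresponding kernel. Hence as $H'$ ranges over the fifteen index-$2$ subgroups, the generator $U$ of $(H')^\perp$ ranges exactly once over each of the fifteen nonzero elements of $\Jac(Z)[2]$. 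By Definition~\ref{D:EllipticPryms}, $E_U$ is precisely the elliptic curve produced by Proposition~\ref{PropPrymJacobianElliptic} from the subgroup $\langle U\rangle^\perp = H'$, so $\Prym^{H'} \sim \Jac(E_U) \cong E_U$ as a $K$-isogeny. Substituting these fifteen isogenies into the displayed decomposition and reindexing the product over nonzero $U \in \Jac(Z)[2]$ rather than over $H'$ yields
\[
\Jac(\Ztilde) \sim E^2 \times \prod_U E_U,
\]
which is the claim.

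There is no real obstacle here; the statement is essentially a bookkeeping corollary of the two propositions. The only point requiring a moment's care is the correspondence $H' \leftrightarrow U$ and the observation that it is a bijection (so the fifteen factors $\Prym^{H'}$ match up bijectively with the fifteen curves $E_U$), which is exactly what makes Definition~\ref{D:EllipticPryms} well-posed; once that is granted, the corollary follows by direct substitution. I would therefore keep the proof to a few sentences: apply Proposition~\ref{prop:decomposecover} with $H=\mathbf{0}$, note via the perfectness of the Weil pairing that $H'\mapsto U$ is a bijection between index-$2$ subgroups and nonzero $2$-torsion points, and invoke Proposition~\ref{PropPrymJacobianElliptic} together with Definition~\ref{D:EllipticPryms} to rewrite each $\Prym^{H'}$ as $E_U$.
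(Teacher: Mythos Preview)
Your proof is correct and follows exactly the paper's approach: the paper's own proof is the single sentence ``Combine Proposition~\ref{prop:decomposecover} with $H = \mathbf{0}$ and Proposition~\ref{PropPrymJacobianElliptic}.'' Your version simply spells out the bijection $H' \leftrightarrow U$ more explicitly, which is fine and perhaps clearer.
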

\begin{proof}
Combine Proposition \ref{prop:decomposecover} with $H = \mathbf{0}$ and
Proposition~\ref{PropPrymJacobianElliptic}.
\end{proof}

\begin{proposition}
\label{Porbit}
The set of nonzero elements of $\Jac(Z)[2]$ breaks up into six orbits under the
action of $D_4$, as listed in Table~\textup{\ref{tab:orbits}}. For each $U$ in
an orbit, the table presents a value $a\in K^\times$ as in
Proposition~\textup{\ref{PropPrymJacobianElliptic}}, and values of $\lambda$ and
$d$ such that $E_U$ is isomorphic to $y^2 = d x (x-1)(x-\lambda)$.
\end{proposition}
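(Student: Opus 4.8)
The plan is to work entirely on the combinatorial level of the index-$2$ subgroups of $\Jac(Z)[2]$, using the identification $\Jac(Z)[2]\cong(\ZZ/2\ZZ)^4$ via classes $[W_r - W_\zeta]$ and the explicit Weil pairing from Lemma~\ref{lem:explicitweil}. First I would recall that the nonzero elements $U\in\Jac(Z)[2]$ are in bijection with two-element subsets $\{u,v\}\subseteq R$, and that the $D_4$-action on $\Jac(Z)[2]$ is induced by the $D_4$-action on $R=\{\zeta,-\zeta,t,-t,1/t,-1/t\}$, where $\sigma\colon x\mapsto -x$ and $\tau\colon x\mapsto 1/x$. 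So $D_4$ acts on $R$ with the size-$2$ orbit $\{\zeta,-\zeta\}$ (on which $\sigma$ and $\tau$ both act trivially up to sign; more precisely $\sigma$ swaps $\zeta\leftrightarrow-\zeta$ and $\tau$ fixes both, since $1/\zeta=-\zeta$) and the size-$4$ orbit $\{t,-t,1/t,-1/t\}$. I would then enumerate the $\binom{6}{2}=15$ two-element subsets of $R$ and group them into $D_4$-orbits by direct inspection: the possible orbit types are $\{\zeta,-\zeta\}$ itself (one subset, orbit of size $1$); pairs $\{\pm\zeta, w\}$ with $w$ in the size-$4$ orbit (eight subsets, and one checks these form a single orbit of size $8$ — or possibly two orbits, to be determined by the action); pairs $\{w,w'\}$ within the size-$4$ orbit (six subsets, splitting into the "$\pm$" type $\{t,-t\}$, $\{1/t,-1/t\}$ and the "inverse" type $\{t,1/t\}$, $\{-t,-1/t\}$, and the "mixed" type $\{t,-1/t\}$, $\{-t,1/t\}$). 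Counting orbits carefully should yield exactly six, matching Table~\ref{tab:orbits}.

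Next, for a representative $U=[W_u-W_v]$ in each orbit, I would compute the value $a\in K^\times/K^{\times 2}$ from the formula in Proposition~\ref{PropPrymJacobianElliptic} and then the elliptic curve $E_U\colon y^2 = a\prod_{r\in R\setminus\{u,v\}}(x-r)$. This is the routine-calculation part: for each of the six orbit representatives, write out the quartic $\prod_{r\in R\setminus\{u,v\}}(x-r)$ in terms of $t$ (and $\zeta$), move the four ramification points to $\{0,1,\lambda,\infty\}$ by a fractional linear change of coordinates, track how $a$ and the leading coefficient transform under that change, and read off $\lambda$ and $d$. The value $\lambda$ is of course only well-defined up to the anharmonic group action $\lambda\mapsto\{\lambda,1-\lambda,1/\lambda,\dots\}$, and $d$ is well-defined only modulo squares and modulo the substitution rules, so I would simply record one convenient choice for each orbit, exactly as the table does. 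I would double-check internal consistency: the case $\zeta\in\{u,v\}$ uses the second branch of the definition of $a$, which applies precisely to the orbit $U=[W_\zeta-W_{-\zeta}]$ and possibly the $\{\pm\zeta,w\}$ orbits — care is needed to apply the right branch.

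The main obstacle I anticipate is not any single hard step but rather the bookkeeping: getting the $D_4$-orbit structure on the $15$ subsets exactly right (in particular confirming it is six orbits and not five or seven, which hinges on how $\sigma$ and $\tau$ act on pairs like $\{\zeta,t\}$ versus $\{-\zeta,t\}$), and then keeping the square-class of $a$ and the leading coefficient $d$ consistent through each Möbius normalization. A secondary subtlety is that $U$ is identified with $[W_u-W_v]=[W_v-W_u]$, so the formula for $a$ in Proposition~\ref{PropPrymJacobianElliptic} must be symmetric in $u,v$ — it visibly is — but one should confirm that the two branches (whether or not $\zeta\in\{u,v\}$) agree with the relation $\sum_r[W_r-W_\zeta]=0$ when one rewrites a class using $\zeta$ in or out of the support. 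Once the orbit decomposition and one representative computation per orbit are pinned down, filling in Table~\ref{tab:orbits} is mechanical, and since $D_4$ acts on $\Jac(\Ztilde)\sim E^2\times\prod_U E_U$ compatibly with its action on the indexing set (by Corollary~\ref{cor:jacstr} and functoriality of the Prym construction), the curves $E_U$ for $U$ in a common orbit are all $K$-isomorphic, so presenting a single $(\lambda,d,a)$ per orbit suffices.
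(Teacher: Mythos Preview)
Your approach is essentially identical to the paper's: compute the $D_4$-action on the two-element subsets $\{u,v\}\subset R$ via $\sigma(\{u,v\})=\{-u,-v\}$ and $\tau(\{u,v\})=\{1/u,1/v\}$, read off the orbits, then for each $U$ compute $a$ from Proposition~\ref{PropPrymJacobianElliptic} and pass to Legendre form via a M\"obius transformation. That is exactly what the paper does.

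However, your final sentence contains a genuine error. You assert that ``the curves $E_U$ for $U$ in a common orbit are all $K$-isomorphic, so presenting a single $(\lambda,d,a)$ per orbit suffices.'' This is false, and the table itself contradicts it: Orbits~4A and~4B each carry \emph{two} distinct values of $d$ (namely $\zeta$ and $\zeta c(t^2+1)$), so the four curves in each of these orbits are not all $K$-isomorphic --- they fall into two twist classes. The functoriality argument you sketch breaks down because an element $\alpha\in D_4$ that does not fix the basepoint $P=(\zeta,0)$ does not commute with the Abel--Jacobi embedding, and hence does not induce a $K$-rational isomorphism $E_U\to E_{\alpha(U)}$; it only gives a geometric one. (This is exactly the content of the paper's Remark following the proof.) Concretely, within Orbit~4A the subsets $\{\zeta,1/t\}$ and $\{-\zeta,-1/t\}$ are swapped by $\sigma$, but $\sigma$ moves the basepoint, and indeed the corresponding $a$-values in the table lie in different square classes. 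So you cannot get away with one computation per orbit; for Orbits~4A and~4B you must compute separately for a representative of each sub-pair.

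A smaller slip: you write that $\tau$ fixes $\zeta$ and $-\zeta$ because $1/\zeta=-\zeta$. That equation is correct, but it says $\tau$ \emph{swaps} them, not fixes them. This does not affect the orbit count on subsets, since $\{\zeta,-\zeta\}$ is preserved either way, but it is worth getting right when you later track which branch of the formula for $a$ applies.
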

\begin{table}[ht]
{\begin{tabular}{ll@{\qquad}rrr}
\toprule
Orbit & Point &     &           &      \\
label & label & $a$ & $\lambda$ & $d$  \\
\midrule
\quad1     & $\{  \zeta, -\zeta \}$ & $1                   $ & $4       t^2 /(t^2 + 1)^2   $ & $1$                 \\[2ex]
\quad2A    & $\{      t,     -t \}$ & $c(t^2 + 1)          $ & $4 \zeta t   /(t + \zeta)^2$  & $c(t^2 + 1)$        \\
           & $\{    1/t,   -1/t \}$ & $c(t^2 + 1)          $ &                               &                     \\[2ex]
\quad2B    & $\{     -t,   -1/t \}$ & $\zeta c t (t^2 + 1) $ & $2(t^2 - 1)/(t + \zeta)^2$    & $c(t^2 + 1)$        \\
           & $\{      t,    1/t \}$ & $\zeta c t (t^2 + 1) $ &                               &                     \\[2ex]
\quad2C    & $\{      t,   -1/t \}$ & $\zeta c t           $ & $-1$                          & $c(t^2 + 1)$        \\
           & $\{     -t,    1/t \}$ & $\zeta c t           $ &                               &                     \\[2ex]
\quad4A    & $\{  \zeta,    1/t \}$ & $\zeta   t (t+\zeta) $ & $-2\zeta t/(t - \zeta)^2$     & $\zeta$             \\
           & $\{  \zeta,     -t \}$ & $          (t+\zeta) $ &                               &                     \\[1ex]
           & $\{ -\zeta,   -1/t \}$ & $\zeta c t (t-\zeta) $ & $-2\zeta t/(t - \zeta)^2$     & $\zeta c (t^2 + 1)$ \\
           & $\{ -\zeta,      t \}$ & $      c   (t-\zeta) $ &                               &                     \\[2ex]
\quad4B    & $\{  \zeta,      t \}$ & $          (t-\zeta) $ & $ 2\zeta t/(t + \zeta)^2$     & $\zeta$             \\
           & $\{  \zeta,   -1/t \}$ & $\zeta   t (t-\zeta) $ &                               &                     \\[1ex]
           & $\{ -\zeta,     -t \}$ & $      c   (t+\zeta) $ & $ 2\zeta t/(t + \zeta)^2$     & $\zeta c (t^2 + 1)$ \\
           & $\{ -\zeta,    1/t \}$ & $\zeta c t (t+\zeta) $ &                               &                     \\
\bottomrule    
\end{tabular}
}
\vskip 2ex
\caption{The fifteen elliptic curves $E_U$ for nonzero $U\in \Jac(Z)[2]$,
labeled as in Definition~\ref{D:EllipticPryms}, grouped in their orbits under
the action of $D_4$. The value of $a$ is as in
Proposition~\ref{PropPrymJacobianElliptic}, and the values of $\lambda$ and $d$
are such that $E_U$ is also isomorphic to $y^2 = d x(x-1)(x-\lambda)$. Recall
that each $E_U$ can be recovered as the Prym variety of the double cover of $Z$ 
associated, as in Proposition~\ref{PropPrymJacobianElliptic}, to the subgroup 
$H'\subseteq \Jac(Z)[2]$ that pairs trivially with $U$ under the Weil pairing.
}
\label{tab:orbits}
\end{table}

\begin{proof}
The generators $\sigma$ and $\tau$ of the $D_{4}$ subgroup of $\Aut(Z)$ act on 
the curve labels via
\[
\sigma(\{u,v\}) = \bigl\{ -u, -v \bigr\} \text{\quad and \quad} 
  \tau(\{u,v\}) = \bigl\{ 1/u, 1/v \bigr\},
\]
so the grouping into orbits is clear. The value of $a$  is determined via 
Proposition~\ref{PropPrymJacobianElliptic}, and the associated $\lambda$ and $d$
are computed by applying a linear fraction transformation to put the curve $E'$
from Proposition~\ref{PropPrymJacobianElliptic} into Legendre form.
\end{proof}

\begin{remark}
Suppose an element $\alpha\in\Aut(Z)$ takes $U\in\Jac(Z)[2]$ to $V$. If $\alpha$
does not fix the base point $(\zeta,0)$ by which we embedded $Z$ into $\Jac(Z)$,
then $\alpha$ does not necessarily provide a $K$-rational isomorphism between
$E_U$ and $E_V$, because the base point determines the appropriate twist of the
elliptic curve associated to a $2$-torsion point. We see this, for example, in
Orbits 4A and 4B: each of these orbits has two different values of $d$.
\end{remark}

\begin{remark}
\label{rmk:1mzIsogenyCurve}
The order-$4$ automorphism of $Z$ induces an order-$4$ automorphism $\zeta$ of
$\Jac(Z)$, such that multiplication by $2$ factors as $(1-\zeta)(1+\zeta)$. A
natural object of study is the degree-$4$ cover of $Z$ whose Jacobian contains
orbits 1 and 2C; it arises as $\Ztilde^H$ when $H\colonequals\ker(1- \zeta)$. 
See Sections~\ref{sec:heuristicdoublenaive} and~\ref{sec:unexpected} for more
details.
\end{remark}

\section{Heuristics for isogenous curves} \label{sec:heuristicjustthecurves}

Let $q$ be a power of an odd prime $p$ and let $K\cong \Fq$ be a finite field of
order $q$. In this section, we consider genus-$2$ curves $Z$ over $K$ having the
property that $D_4 \subseteq \Aut(Z)$. We study unordered pairs of
non-isomorphic curves of this type whose Jacobians are isogenous to one another.
The main result of this section is Theorem~\ref{T:justthecurves}, which gives
upper and lower bounds for the number of these unordered pairs in terms of $q$.

\subsection{The moduli space of genus-\texorpdfstring{$2$}{2} curves with
            \texorpdfstring{$D_4$}{D4}-action}
Recall from Notation~\ref{N:ZZ} that $\CZ$ is the set of $K$-isomorphism classes
of objects $(Z,\epsilon)$, where $Z$ is a genus-$2$ curve over $K$ and where
$\epsilon\colon D_4 \hookrightarrow \Aut(Z)$ is an embedding. Let $\CZbar$
denote the set of $K$-isomorphism classes of genus-$2$ curves over $K$ such that
$D_4 \subseteq \Aut(Z)$, and let $\nu\colon \CZ \to \CZbar$ be the forgetful
morphism taking the object $(Z,\epsilon)$ to the curve $Z$. At the the beginning
of Section~\ref{sec:D4curves} we defined $\xi$ to be the involution of $D_4$ 
that swaps the generators $a$ and $b$. We can define an involution on $\CZ$ as
well, by sending $(Z,\epsilon)$ to $(Z,\epsilon\xi)$.

\begin{notation}
\label{N:X}
Let $\CX$ be the set of isomorphism classes of objects $(E,P)$, where $E$ is an
elliptic curve over $K$ and $P$ is a $K$-rational point of order $2$ on $E$. Two
such objects $(E_1,P_1)$ and $(E_2,P_2)$ are isomorphic if there is a
$K$-rational isomorphism $E_1\to E_2$ taking $P_1$ to $P_2$.
\end{notation}

Let $\chi$ be the involution on $\CX$ that sends a pair $(E,P)$ to the pair
$(E',P')$, where $E' = E/\langle P\rangle$ and where $P'$ is the generator of
the kernel of the dual isogeny $E'\to E$. Let $\CX_{-8} \subset \CX$ be the
subset consisting of those objects $(E,P)$ such that $E$ has a
\emph{$K$-rational} endomorphism $\beta$ with $\beta^2 = -2$ for which
$\beta(P) = 0$. Let $\CX_{-4} \subset \CX$ be the subset consisting of
those $(E,P)$ such that $E$ has a \emph{geometric} automorphism $\alpha$ 
satisfying $\alpha^2 = -1$ for which $\alpha(P) = P$. Finally, let 
$\CX' = \CX\setminus\CX_{-4}$. The involution $\chi$ on $\CX$ restricts to an 
involution on $\CX'$.

In Section~\textup{\ref{ssec:structureofJac}}, we associated to every genus-$2$
curve with $D_4$-action $(Z,\epsilon)$ an elliptic curve $E$ and a $2$-torsion
point $P$ on $E$. Thus there is a map $\mu\colon \CZ \to \CX$ that sends the
isomorphism class of $(Z,\epsilon)$ to that of $(E,P)$.

\begin{proposition}
\label{P:modulispace}
The map $\mu$ is injective and has image $\CX'$. It takes the involution 
$(Z,\epsilon) \mapsto (Z,\epsilon\xi)$ of $\CZ$ to the involution $\chi$ on 
$\CX'$. The map $\nu\colon\CZ \to \CZbar$ that sends $(Z,\epsilon)$ to $Z$ is
$2$-to-$1$, unless $(Z,\epsilon)$ is fixed by $\xi$ or, equivalently, unless 
$\mu(Z, \epsilon) \in \CX_{-8}$.
\end{proposition}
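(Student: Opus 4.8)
The plan is to establish the three assertions in order: injectivity of $\mu$, identification of its image with $\CX'$, compatibility with the involutions, and the $2$-to-$1$ statement for $\nu$ with its characterization of the exceptional fiber. Much of the scaffolding is already in place: Proposition~\ref{prop:JacobianDecomposition} and Proposition~\ref{prop:ZfromEP} together describe precisely which pairs $(E,P)$ arise and how $\Jac(Z)$ is built from them, so the work here is largely bookkeeping about isomorphism classes and about the two generators $\sigma,\tau$ of $D_4$.

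First I would prove that $\mu$ is injective with image $\CX'$. Proposition~\ref{prop:ZfromEP} already tells us exactly which $(E,P)$ occur as $\mu(Z,\epsilon)$: those for which $E$ has no geometric automorphism $\alpha \ne \pm 1$ fixing $P$. The last sentence of that proof observes that in characteristic not $2$ an automorphism $\alpha \ne \pm 1$ fixing one $2$-torsion point swaps the other two, so such an $\alpha$ satisfies $\alpha^2 = -1$ on $E[2]$; combined with the classification of elliptic-curve automorphism groups this forces $\alpha^2 = -1$ on $E$, and conversely any automorphism with $\alpha^2=-1$ fixing $P$ is $\ne \pm 1$. Hence the image of $\mu$ is exactly $\CX \setminus \CX_{-4} = \CX'$. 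For injectivity, I would argue that Lemma~\ref{L:moduli} reduces $(Z,\epsilon)$ to the normal form \eqref{EZstwist}--\eqref{EZstwisteps} with $s$ a well-defined element of $K$ and $c$ well-defined in $K^\times/K^{\times 2}$, and \eqref{EdefE} recovers $(E,P)$ from $(c,s)$; conversely the recipe in \cite[Prop.~4]{HoweLeprevostPoonen2000} recovers $(c,s)$ from $(E,P)$ (after normalizing $E$ so that $P=(0,0)$), so the two constructions are mutually inverse on isomorphism classes. One must check that the $K^{\times 2}$-ambiguity in $c$ matches the isomorphism ambiguity in the model $y^2 = x(x^2+ax+b)$ for $(E,P)$, which is the main technical point of this half.

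Next, the compatibility of involutions: swapping $a \leftrightarrow b$ in $\epsilon$ swaps the roles of $\sigma$ and $\tau$, hence swaps the two non-central involution-quotients of $Z$. By Lemma~\ref{lem:quot1} the $\sigma$-quotient is $E$, while Lemma~\ref{lem:quot2} identifies the $\tau$-quotient with $E'$, which is exactly $E/\langle P\rangle$ equipped with the kernel $P'$ of the dual isogeny. Thus $\mu(Z,\epsilon\xi) = (E',P') = \chi(\mu(Z,\epsilon))$, and since $\chi$ preserves $\CX'$ (an elliptic curve has an automorphism with square $-1$ fixing its $2$-torsion point iff its $2$-isogenous partner does, because such automorphisms come from geometric $j=1728$ structure shared by isogenous curves — or one checks directly via the $2$-isogeny formula) the claimed restriction makes sense. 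I would record the $\chi$-invariance of $\CX'$ explicitly as part of this step.

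Finally the fiber count for $\nu$. For $Z \in \CZbar$ the fiber $\nu^{-1}(Z)$ consists of the $\Aut(Z)$-conjugacy classes of embeddings $D_4 \hookrightarrow \Aut(Z)$; the proof of Lemma~\ref{L:moduli2} already shows there are exactly two such classes for every $Z$ in the family (the two cosets of $\Inn(D_4)$ in $\Aut(D_4)$, one being $\xi$), except in the degenerate $s=-6$, $-2\in K^{\times 2}$ case where the extra order-$16$ automorphism merges them into one — but that case corresponds precisely to $\nu$ being $1$-to-$1$, i.e.\ to $(Z,\epsilon)$ fixed by $\xi$. So generically $\nu$ is $2$-to-$1$, and the unique fiber where the two classes coincide is where $(Z,\epsilon) \cong (Z,\epsilon\xi)$. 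It remains to translate ``fixed by $\xi$'' into ``$\mu(Z,\epsilon)\in\CX_{-8}$''. Here $(Z,\epsilon)\cong(Z,\epsilon\xi)$ means $(E,P)\cong(E',P')$, i.e.\ the $2$-isogeny $E\to E'=E/\langle P\rangle$ is an isomorphism onto its image followed by an identification, which after composing with the dual gives a $K$-rational endomorphism $\beta$ of $E$ with $\beta^2 = [\pm 2]$ killing $P$; examining the explicit $2$-isogeny formula preceding Lemma~\ref{lem:quot2} shows the sign is $-2$, so $\beta^2 = -2$ and $(E,P)\in\CX_{-8}$. Conversely such a $\beta$ produces the required isomorphism $(E,P)\cong(E',P')$. \emph{The main obstacle} I expect is exactly this last identification: pinning down that the composite endomorphism has square $-2$ (not $+2$) and that $K$-rationality of $\beta$ is both necessary and sufficient requires care with the explicit isogeny, with which quadratic twist is in play, and with the hypothesis $\zeta \in K$; everything else is a matter of assembling results already proved above.
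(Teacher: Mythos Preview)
Your outline matches the paper's proof closely: image via Proposition~\ref{prop:ZfromEP}, injectivity by normalizing the model of $(E,P)$, involution compatibility via Lemmas~\ref{lem:quot1}--\ref{lem:quot2}, and the fiber count via Lemma~\ref{L:moduli2}. Two small points are worth adjusting.

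First, your reasoning that $\chi$ preserves $\CX'$ because ``$j=1728$ structure is shared by isogenous curves'' is not right as stated (isogenous curves need not share $j$-invariants). The correct reason is that an automorphism $\alpha$ with $\alpha^2=-1$ fixing $P$ preserves $\langle P\rangle$, hence descends to $E'=E/\langle P\rangle$, and the descended automorphism fixes $P'$ because $\alpha$ swaps the other two $2$-torsion points, which have the same image in $E'$.

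Second, on the $\CX_{-8}$ identification, the paper's argument is cleaner than computing with the explicit $2$-isogeny formula (which does not directly help, since the isomorphism $\psi\colon E'\to E$ is not given by any formula in the generic family). The paper observes abstractly that if $\psi(P')=P$ then $\beta=\psi\phi$ sends every $2$-torsion point into $\langle P\rangle$, so $\beta^2$ kills $E[2]$ and hence $\beta^2=2u$ for some unit $u\in\End(E)$. A short case analysis on $u$ then shows that either $u=-1$ (giving $\beta^2=-2$) or $\beta=\pm1\pm\zeta$ with $\zeta^2=-1\in\Aut(E)$, which forces $\zeta(P)=P$ and hence $(E,P)\in\CX_{-4}$, already excluded. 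This disposes of the sign question without any explicit computation, and is exactly the obstacle you flagged.
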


\begin{proof}
Let $(E,P) \in \CX$. By Proposition~\ref{prop:ZfromEP}, there exists
$(Z,\epsilon) \in \CZ$ such that $\mu(Z,\epsilon) = (E,P)$ if and only if there
is no automorphism $\alpha\neq\pm1$ of $E$ that fixes~$P$. Combining this with 
the observation that an automorphism $\alpha\neq\pm1$ of an elliptic curve in
characteristic not $2$ that fixes a $2$-torsion point must have order~$4$, we
find that $(E,P)$ is in the image of $\mu$ if and only if it lies in $\CX'$.

Next we show that a genus-$2$ curve with $D_4$-action $(Z,\epsilon)$ can be
recovered from its image $(E,P)$ under $\mu$. To see this, we first write down a
short Weierstrass model for $E$ such that $P$ is the point $(0,0)$. Such a model
is of the form $y^2 = x(x^2 + dx + e)$, and the model is unique up to scaling
$x$ and $y$. The coefficient $e$ is nonzero because the model is nonsingular,
and $d$ is also nonzero, because otherwise the map $(x,y)\mapsto (\zeta x,-y)$
would be an automorphism of order~$4$ that fixes~$P$, and $E$ has no such
automorphisms because $(E,P)$ lies in $\CX'$. There is a unique way to scale $x$
so that the model becomes $y^2 = cx(x^2 + fx - f)$ for $c,f\in K^\times$ with
$f\ne -4$; the value of $f$ is unique, and the value of $c$ is unique up to
squares. Replacing $x$ with $x+1$ transforms the model into 
$y^2 = c (x + 1) (x^2 + s x + 1)$, where $s = f + 2 \ne \pm 2$ and where
$P = (-1,0)$. We have shown that $(E,P)$ determines unique values of $s\in K$ 
and $c\in K^\times/K^{\times 2}$, and these values determine a unique genus-$2$
curve with $D_4$-action, via Equations~\eqref{EZstwist} and~\eqref{EZstwisteps},
so $\mu$ is injective.

Lemmas~\ref{lem:quot1} and~\ref{lem:quot2} show that if 
$\mu(Z,\epsilon) = (E,P)$ then $\mu(Z,\epsilon\xi) = (E',P')$, so $\mu$ takes
the involution $(Z,\epsilon) \mapsto (Z,\epsilon\xi)$ to~$\chi$.

The fact that $\nu$ is $2$-to-$1$ except for the objects $(Z,\epsilon)$ that are
isomorphic to $(Z,\epsilon\xi)$ follows from Lemma~\ref{L:moduli2} and its
proof. By the preceding statements, $(Z,\epsilon)$ and $(Z,\epsilon\xi)$ are
isomorphic if and only if $(E,P)$ is fixed by $\chi$, meaning that $(E,P)$ and
$(E',P')$ are isomorphic. This is true if and only if $E$ has an endomorphism
$\beta$, whose kernel is generated by $P$, such that $\beta^2 = 2u$ for a unit
$u$ in $\End(E)$. The only possibilities are that: (i) $\beta = \pm1\pm \zeta$
where $\zeta \in \Aut(E)$ with $\zeta^2 = -1$, in which case 
$(E, P) \not \in \CX'$, or (ii) $\beta$ satisfies $\beta^2 = -2$. 
\end{proof}

\begin{remark}
As the preceding proof shows, the curves $Z\in\CZbar$ that have only one
preimage in $\CZ$ correspond to the exceptional curves in Lemma~\ref{L:moduli2},
that is, the curves with $s = -6$ when $-2$ is a square. We note that when
$s=-6$ the associated elliptic curve $E$ has $j$-invariant $8000$, which is the
unique root of the Hilbert class polynomial for $\ZZ[\sqrt{-2}]$. The
endomorphisms $\beta\in\End(E)$ with $\beta^2 =-2$ kill the $2$-torsion point
$P = (-1,0)$, and these endomorphisms are $K$-rational if and only if $-2$ is a
square.
\end{remark}

\begin{remark}
We can view $\CZ$ as the coarse moduli space for objects $(Z,\epsilon)$ as in
Notation~\ref{N:ZZ}. Similarly, we can view $\CX$ as the coarse moduli space
$Y_1(2) \cong Y_0(2)$, namely the modular curve $X_1(2)$ with its cusp removed.
Then, under the embedding $\CZ \hookrightarrow Y_0(2)$ the involution associated
to $\CZ \to\CZbar$ is the Fricke involution on $Y_0(2)$. The language of moduli
spaces is not useful to us here since these are not fine moduli spaces and we 
need to keep track of the field of definition of the objects. 
\end{remark}

\subsection{Counting isogenous pairs of curves with
            \texorpdfstring{$D_4$}{D4}-action}
\label{ssec:countingcurves}            

\begin{definition}
Let $P(q)$ denote the number of unordered pairs $\{Z_1,Z_2\}$, where 
$Z_1, Z_2 \in \CZbar$ are not isomorphic to one another and $\Jac(Z_1)$ and 
$\Jac(Z_2)$ are isogenous. 
\end{definition}

The following theorem determines the rate of growth of $P(q)$ up to logarithmic
factors.

\begin{theorem}
\label{T:justthecurves}
There are constants $d_1, d_2>0$ such that for all odd prime powers $q >7$,
\[ d_1 q^{3/2} \le P(q) \le d_2 q^{3/2} (\log q)^2 (\log\log q)^4.\]
If the generalized Riemann hypothesis holds, there is a constant $d_3>0$ such
that 
\[ P(q) \le d_3 q^{3/2} (\log\log q)^6.\]
\end{theorem}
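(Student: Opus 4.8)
\textbf{Proof proposal for Theorem~\ref{T:justthecurves}.}

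The plan is to count isogeny classes and the curves in each, using the elliptic-curve description of $\CZbar$ supplied by Proposition~\ref{P:modulispace}. By Lemma~\ref{lem:quot1}, every $Z\in\CZbar$ has $\Jac(Z)\sim E^2$ for the associated elliptic curve $E$, so two curves $Z_1,Z_2\in\CZbar$ are isogenous exactly when their associated elliptic curves $E_1,E_2$ are isogenous over $\Fq$. Since $\CX'$ maps onto $\CZbar$ (via $\mu$) and the fibers of $\nu$ have size at most $2$, we can work on the level of pairs $(E,P)$ of an elliptic curve with a rational $2$-torsion point, discarding the negligibly small exceptional loci $\CX_{-4}$ and $\CX_{-8}$; the curves $Z$ with extra automorphisms (the values $s\in\{-6,-1,14\}$) contribute only $O(1)$ to $P(q)$ and can be ignored. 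Thus $P(q)$ is, up to $O(q)$-type error terms coming from isomorphisms versus isogenies and from the $2$-to-$1$ versus $1$-to-$1$ subtleties, comparable to $\sum_{\mathcal I} \binom{n(\mathcal I)}{2}$, where $\mathcal I$ runs over $\Fq$-isogeny classes of elliptic curves and $n(\mathcal I)$ counts the $Z\in\CZbar$ whose associated elliptic curve lies in $\mathcal I$.

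First I would set up this bookkeeping carefully: an elliptic curve $E/\Fq$ with full rational $2$-torsion contributes three pairs $(E,P)$, one with only partial rational $2$-torsion contributes one, and one must track quadratic twists, since $(E,P)$ and a quadratic twist $(E^d,P^d)$ are non-isomorphic but isogenous only when $E$ is supersingular (so again a negligible contribution). The upshot is that $n(\mathcal I)$ is, up to a bounded multiplicative constant, the number of $\Fq$-isomorphism classes of elliptic curves in $\mathcal I$ that have a rational point of order $2$ — equivalently, the number of $j$-invariants in $\mathcal I$ realized by a curve with rational $2$-torsion, weighted by the number of twists. For $q>7$ the automorphism-count corrections are uniformly bounded, so $P(q) = \Theta\!\big(\sum_{\mathcal I} n(\mathcal I)^2\big) + O(q)$.

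Next I would estimate $\sum_{\mathcal I} n(\mathcal I)^2$. For the lower bound $d_1 q^{3/2}$, use that the number of isogeny classes of ordinary elliptic curves over $\Fq$ is $\asymp q$ with traces $a$ ranging over a full interval of length $\asymp\sqrt q$, that a positive proportion of these classes contain a curve with a rational $2$-torsion point (this is a congruence condition $a\equiv \#E(\Fq)\bmod 2$ type statement, controlled since $2\mid \#E(\Fq)$ iff $E$ has a rational $2$-torsion point, i.e.\ $q+1-a$ is even), and that the class number $h(a^2-4q)$ — which governs the number of curves in the class — satisfies $h(a^2-4q)\gg \sqrt{|a^2-4q|}/\log q \gg q^{1/2}/\log q$ on average or for a positive-density set of $a$ by the Brauer--Siegel / class number bounds. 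Summing $n(\mathcal I)^2 \gg (q^{1/2}/\log q)^2$ over $\asymp q$ classes gives $\gg q^2/(\log q)^2$, which is actually stronger than needed; to get the clean bound $q^{3/2}$ one can instead just exhibit, for each of $\asymp \sqrt q$ admissible traces, a single pair of distinct isogenous curves, or note the weaker pointwise lower bound $h\gg 1$. For the upper bound, bound $n(\mathcal I) \ll (\#\text{curves in }\mathcal I) \ll h(a^2-4q)$ and use $\sum_{\mathcal I} n(\mathcal I)^2 \le (\max_{\mathcal I} n(\mathcal I)) \cdot \sum_{\mathcal I} n(\mathcal I) \le (\max_{\mathcal I} h(a^2-4q)) \cdot \#\CZbar$. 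Now $\#\CZbar \asymp q$ (it is essentially parametrized by the single parameter $s$, or by $(E,P)$ up to the bounded corrections above), and the classical upper bound $h(D) \ll \sqrt{|D|}\log|D|\,(\log\log|D|)$ (with $|D|\le 4q$) — together with the sharper $h(D)\ll \sqrt{|D|}\,(\log\log|D|)$ under GRH, via Littlewood's bound on $L(1,\chi_D)$ — gives exactly the two stated upper bounds, once the extra $\log q$ from the number-of-twists factor and the $(\log\log q)$ powers are tallied honestly. (The slightly elaborate $(\log q)^2(\log\log q)^4$ in the unconditional statement arises from combining the $h(D)$ bound with a bound on the number of distinct twists and on repeated traces; I would import these from the standard references on the distribution of elliptic curves over finite fields, e.g.\ the work building on Deuring and on Lenstra's analysis of $\#E(\Fq)$.)

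The main obstacle is obtaining a \emph{uniform} handle on $n(\mathcal I)$ — that is, on how many of the $\CZbar$-curves fall into a single isogeny class — without losing more than logarithmic factors. This requires (i) translating ``$Z_1\sim Z_2$'' cleanly into ``$E_1\sim E_2$'' while controlling the finitely many anomalous cases (supersingular curves, curves with CM by $\ZZ[\zeta]$ or $\ZZ[\sqrt{-2}]$, curves with $j\in\{0,1728\}$), each of which contributes $O(\sqrt q)$ and hence is harmless; and (ii) invoking the right form of the class number estimates with explicit dependence on the discriminant. Step (ii) is where GRH enters and is the reason the conditional bound is so much cleaner: unconditionally one cannot rule out a Siegel zero making some single $L(1,\chi_D)$, and hence some single $n(\mathcal I)$, abnormally large, which forces the extra logarithmic losses in the upper bound. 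Everything else — Riemann--Hurwitz genus counts, the Kani--Rosen decomposition, the passage between $\CZ$, $\CZbar$, and $\CX'$ — is already in hand from the preceding sections, so the proof is essentially an exercise in assembling these inputs with care about the error terms.
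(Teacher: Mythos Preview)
Your upper-bound strategy is essentially the paper's: bound $M(q,t)$ (your $n(\mathcal I)$) by a constant times the number $N(q,t)$ of elliptic curves with trace~$t$, then bound $\sum_t N(q,t)^2$ by $(\text{number of traces})\cdot\max_t N(q,t)^2$ and invoke the standard class-number upper bounds (unconditional and under GRH). The exponents of $\log q$ and $\log\log q$ come out exactly as stated once you use the Kronecker class number bounds in the form quoted in Lemma~\ref{L:boundN}; there is no extra ``number-of-twists'' factor to track.

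The lower bound, however, is where your proposal goes astray. First, a slip: the number of \emph{isogeny} classes of elliptic curves over~$\Fq$ is $\asymp\sqrt q$ (one per trace), not $\asymp q$; it is the number of \emph{isomorphism} classes that is $\asymp q$. More importantly, you reach for pointwise or positive-density lower bounds on $h(a^2-4q)$, and these are exactly what one cannot get effectively without GRH (Siegel's bound is ineffective, and your ``$h\gg\sqrt{|D|}/\log|D|$'' is not a theorem). Your fallback---exhibiting one isogenous pair per trace, or using $h\gg1$---yields only $\asymp\sqrt q$ pairs, far short of $q^{3/2}$.

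The paper avoids all of this with a one-line Cauchy--Schwarz:
\[
\sum_{t} M(q,t)^2 \ \ge\ \frac{\bigl(\sum_t M(q,t)\bigr)^2}{\#\{t\}} \ \ge\ \frac{(q-O(1))^2}{2\sqrt q+1}\ \gg\ q^{3/2},
\]
using only that $\sum_t M(q,t)=\#\CZbar\approx q$ (Lemma~\ref{L:moduli2}) and that there are $\le 2\sqrt q+1$ even traces. No class-number lower bound is needed at all. This is the missing idea in your proposal; once you insert it, the rest of your outline matches the paper's proof.
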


\begin{remark}
Direct calculation shows that $P(q) = 0$ for $q = 3$, $5$, and $7$, so the
hypothesis that $q > 7$ in the theorem is necessary.
\end{remark}

Before we get to the proof of Theorem~\ref{T:justthecurves}, we present some
definitions and lemmas that we will need.

Proposition~\ref{prop:JacobianDecomposition} shows that if $Z$ is a genus-$2$
curve over $K$ with $D_4$-action, then $\Jac(Z)$ is isogenous to $E^2$, where 
$E$ is an elliptic curve with a rational $2$-torsion point. Since $E$ has a 
rational point of order~$2$, $\#E(K)$ is even, and since $q$ is odd, the trace
of Frobenius for $E$ must also be even. This shows that the Weil polynomial of
$\Jac(Z)$ is of the form $(x^2-tx+q)^2$, for an even integer $t$ with 
$t^2\le 4q$.

\begin{definition} 
For each even integer $t$ with $t^2 \le 4q$, let $M(q,t)$ denote the number of
$Z \in \CZbar$ whose Weil polynomial is $(x^2-tx+q)^2$, and let $N(q,t)$ denote
the number of elliptic curves over $K$ with trace $t$.
\end{definition}

\begin{lemma}
\label{L:boundM}
For all odd prime powers $q$ and even integers $t$ with $t^2 \le 4q$ we have
$M(q,t) \le 3N(q,t).$
\end{lemma}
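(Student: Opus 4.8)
The plan is to bound $M(q,t)$ by relating each curve $Z\in\CZbar$ with Weil polynomial $(x^2-tx+q)^2$ to an elliptic curve of trace $t$, using the machinery of Section~\ref{ssec:structureofJac}. First I would recall from Proposition~\ref{P:modulispace} that the map $\mu\colon\CZ\to\CX$ is injective with image $\CX'$, and that the forgetful map $\nu\colon\CZ\to\CZbar$ is at most $2$-to-$1$. So it suffices to bound the number of pairs $(E,P)\in\CX'$ such that the genus-$2$ curve $Z$ attached to $(E,P)$ has Weil polynomial $(x^2-tx+q)^2$; the factor of $2$ coming from $\nu$ will be absorbed, and I expect the ``$3$'' in the statement to come from the fact that a single elliptic curve $E$ of trace $t$ has at most three $K$-rational $2$-torsion points, hence contributes at most three pairs $(E,P)$.

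The key steps, in order: (1) Take $Z\in\CZbar$ with Weil polynomial $(x^2-tx+q)^2$, choose an embedding $\epsilon$ (so $(Z,\epsilon)\in\CZ$), and let $(E,P)=\mu(Z,\epsilon)$. By Lemma~\ref{lem:quot1}, $\Jac(Z)\sim E^2$, so the Weil polynomial of $E$ is $x^2-tx+q$; that is, $E$ has trace $t$. (2) Conclude that $Z$ determines, via $\mu$, an element $(E,P)\in\CX'$ with $E$ of trace $t$; conversely each such $(E,P)$ arises from at most one $(Z,\epsilon)\in\CZ$ (injectivity of $\mu$), hence from at most one $Z\in\CZbar$ by composing with a section of $\nu$. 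Wait—that overcounts in the wrong direction: I should instead argue that the map $\CZbar\to\{\text{trace-}t\text{ elliptic curves}\}$ sending $Z\mapsto E$ has fibers of size at most $3$. Indeed if $Z\mapsto E$ then any $\epsilon$ gives $\mu(Z,\epsilon)=(E,P)$ for one of the at most three $2$-torsion points $P$ of $E$, and by injectivity of $\mu$ the class of $(Z,\epsilon)$ is determined by $(E,P)$; since $\nu$ is surjective onto its image, $Z$ is determined by the pair $\{\mu(Z,\epsilon),\mu(Z,\epsilon\xi)\}$, so in particular by $E$ together with the choice of $P$. Thus the fiber over $E$ has at most three elements, giving $M(q,t)\le 3N(q,t)$.

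\textbf{Main obstacle.} The delicate point is bookkeeping the two-to-one-ness of $\nu$ against the at-most-three-$2$-torsion-points count so that the constant comes out to exactly $3$ rather than $6$. The resolution is that an element $Z\in\CZbar$ has one or two preimages $(Z,\epsilon)$ in $\CZ$ under $\nu$, and these are swapped by $\xi$, so $\mu$ sends them to a $\chi$-orbit $\{(E,P),(E',P')\}$ in $\CX'$; but $E'=E/\langle P\rangle$ need not be isomorphic to $E$, so the two preimages can map to different elliptic curves. The correct statement is therefore: each $Z$ contributes $E$ (and possibly $E'$) to the relevant set of trace-$t$ curves, and conversely a fixed $E$ of trace $t$ has at most three $2$-torsion points $P$, each of which—via at most one $(Z,\epsilon)=\mu^{-1}(E,P)$ and then $\nu$—yields at most one $Z$. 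Hence at most $3$ curves $Z\in\CZbar$ map to a given $E$, which is precisely $M(q,t)\le 3N(q,t)$. I would double-check the edge cases where $(E,P)\in\CX_{-4}$ (so $(E,P)\notin\CX'$ and contributes nothing) and where $(Z,\epsilon)$ is $\xi$-fixed, i.e.\ $\mu(Z,\epsilon)\in\CX_{-8}$ (so $\nu$ is $1$-to-$1$ there), but these only improve the bound.
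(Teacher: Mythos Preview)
Your proposal is correct and, once you have worked through your own back-and-forth, it lands on exactly the argument the paper gives: for each $Z\in\CZbar$ with Weil polynomial $(x^2-tx+q)^2$, choose one embedding $\epsilon$, apply the injection $\mu$ to get a pair $(E,P)$ with $\trace(E)=t$, and then bound the number of such pairs by $3N(q,t)$ since each $E$ has at most three rational $2$-torsion points. The paper's version is simply more direct: it never invokes $\nu$ or $\chi$ at all, because once you \emph{fix} one $\epsilon$ per $Z$, the injectivity of $\mu$ alone gives $M(q,t)\le \#\{(E,P):\trace(E)=t\}\le 3N(q,t)$, and the two-to-one-ness of $\nu$ and the edge cases $\CX_{-4}$, $\CX_{-8}$ are irrelevant to the bound.
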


\begin{proof}
For each curve $Z\in\CZbar$ with Weil polynomial $(x^2 - tx + q)^2$, choose an 
embedding ${\epsilon\colon D_4\hookrightarrow\Aut(Z)}$. 
Proposition~\ref{P:modulispace} shows that $(Z,\epsilon)$ gives rise to a unique
pair $(E,P)$ with ${\trace(E) = t}$, so $M(q,t)$ is at most the number of such
pairs. Since an elliptic curve has at most three rational points of order $2$, 
we have $M(q,t)\le 3 N(q,t).$
\end{proof}

\begin{lemma}
\label{L:boundN}
There is a constant $d_4$ such that for all odd prime powers $q$ and even
integers $t$ with $t^2 \le 4q$, we have
\[N(q,t) < d_4 \sqrt{q} (\log q) (\log\log q)^2.\]
If the generalized Riemann hypothesis holds, there is a constant $d_5$ such that
for all odd prime powers $q$ and even integers $t$ with $t^2 \le 4q$, we have
\[N(q,t) < d_5 \sqrt{q} (\log\log q)^3.\]
\end{lemma}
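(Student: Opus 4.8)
The plan is to reduce the bound to an estimate for class numbers of imaginary quadratic orders, via the theory of complex multiplication. If $E/\Fq$ has $\trace(E)=t$, then its Frobenius endomorphism $\pi$ satisfies $\pi^2-t\pi+q=0$, so $\ZZ[\pi]$ is the imaginary quadratic order of discriminant $t^2-4q$. Write $t^2-4q=d_K c^2$, where $d_K$ is the discriminant of $\QQ(\pi)$ and $c\ge 1$ is the conductor of $\ZZ[\pi]$. When $\gcd(t,p)=1$ and $t^2<4q$ (the ordinary case), Deuring's theorem---as refined by Waterhouse, and tabulated explicitly by Schoof---shows that the elliptic curves over $\Fq$ with trace $t$ are in bijection, up to $\Fq$-isomorphism, with the proper ideal classes of the orders $\mathcal O_f\subseteq\mathcal O_K$ of conductor $f\mid c$; in particular $N(q,t)\le\sum_{f\mid c}h(\mathcal O_f)$. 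The remaining traces, namely those with $p\mid t$ (including $t=0$) and the boundary case $t^2=4q$, are not governed by this clean count; for these one invokes Waterhouse's classification directly, which shows that $N(q,t)=O(\sqrt q\,\log q)$ for each such $t$, comfortably within the asserted bounds. From now on we work in the ordinary case.

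For the ordinary case, the estimate has three ingredients. First, the conductor formula for class numbers of orders gives
\[
h(\mathcal O_f)=\frac{h(\mathcal O_K)\,f}{[\mathcal O_K^\times:\mathcal O_f^\times]}\prod_{\ell\mid f}\Bigl(1-\Bigl(\tfrac{d_K}{\ell}\Bigr)\tfrac1\ell\Bigr)\ \le\ h(\mathcal O_K)\cdot f\prod_{\ell\mid f}\Bigl(1+\tfrac1\ell\Bigr)\ \le\ h(\mathcal O_K)\,\sigma(f),
\]
where $\sigma$ is the sum-of-divisors function. Second, a routine multiplicative computation shows that $\sum_{f\mid c}\sigma(f)\le c\,\bigl(\prod_{\ell\mid c}(1-1/\ell)^{-1}\bigr)^{2}$, which by Mertens's theorem is $O\!\bigl(c\,(\log\log c)^2\bigr)$. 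Third, Dirichlet's class number formula reads $h(\mathcal O_K)=\tfrac{w_K}{2\pi}\sqrt{|d_K|}\,L(1,\chi_{d_K})$ with $w_K\le 6$. Combining these, and using $|d_K|\,c^2=4q-t^2\le 4q$, we obtain
\[
N(q,t)\ \ll\ h(\mathcal O_K)\cdot c\,(\log\log q)^2\ \ll\ \sqrt{|d_K|\,c^2}\;L(1,\chi_{d_K})\,(\log\log q)^2\ \ll\ \sqrt q\;L(1,\chi_{d_K})\,(\log\log q)^2.
\]

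Finally we insert bounds for $L(1,\chi_{d_K})$. Unconditionally, the classical estimate $L(1,\chi_{d_K})\ll\log|d_K|$ together with $|d_K|\le 4q$ gives $L(1,\chi_{d_K})\ll\log q$, hence $N(q,t)\ll\sqrt q(\log q)(\log\log q)^2$; assuming the generalized Riemann hypothesis, Littlewood's bound $L(1,\chi_{d_K})\ll\log\log|d_K|\ll\log\log q$ gives $N(q,t)\ll\sqrt q(\log\log q)^3$. (The finitely many small $q$ outside the range of validity of the asymptotic estimates are absorbed into the constant.) The main obstacle should be purely organizational: disposing of the non-ordinary traces $p\mid t$---especially $t=0$, where $N(q,t)$ itself can be of size $\sqrt q$ up to logarithmic factors---and of the case $t^2=4q$, which fall outside the Deuring count and require the Waterhouse--Schoof analysis, though any crude bound there suffices. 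A subtler point is that all estimates must be uniform in $t$ and must not deteriorate when $|d_K|$ is small (equivalently $c$ large); this is why it is essential to keep the product $\sqrt{|d_K|}\cdot c=\sqrt{4q-t^2}$ together rather than bounding its two factors separately.
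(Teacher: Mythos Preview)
Your argument is correct and is exactly the approach the paper takes: the paper's proof simply cites Schoof's formulas for $N(q,t)$ in terms of Kronecker class numbers $H(t^2-4q)=\sum_{f\mid c}h(\mathcal O_f)$ together with the Achter--Howe bound on Kronecker class numbers, and what you have written is precisely an unpacking of those two ingredients (Deuring/Waterhouse/Schoof for the count, then the conductor formula plus $L(1,\chi)$ bounds for the class-number estimate). Your treatment of the non-ordinary and supersingular cases is also in line with what Schoof's tables provide.
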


\begin{proof}
This follows from the formulas for $N(q,t)$ found 
in~\cite[Theorem~4.6, pp.~194--196]{Schoof1987}),
combined with the bounds on Kronecker class numbers found 
in~\cite[Lemma~4.4, p.~49]{AchterHowe2017}.
\end{proof}

\begin{proof}[Proof of Theorem~\textup{\ref{T:justthecurves}}]
Clearly, 
\[
P(q)
= \sum_{\stackrel{t^2 \le 4q}{\scriptscriptstyle t \text{\ even}}}
  \binom{M(q,t)}{2} 
= \sum_{\stackrel{t^2 \le 4q}{\scriptscriptstyle t \text{\ even}}} M(q,t)^2/2 
  - \sum_{\stackrel{t^2 \le 4q}{\scriptscriptstyle t \text{\ even}}} M(q,t)/2.
\]
The number of even $t$ with $t^2 \le 4q$ is at most $2\sqrt{q} + 1$. By the
Cauchy--Schwarz inequality,
\[ 
\sum  M(q,t)^2 \ge \frac{\left( \sum M(q,t)\right)^2}{(2\sqrt{q}+1)},
\]
where each sum is over the set of even $t$ with $t^2\le 4q$. By
Lemma~\ref{L:moduli2}, the sum of the $M(q,t)$ is either $q-2$ or $q-3$, so
\[P(q) \ge \frac{(q-3)^2}{2(2\sqrt{q}+1)} - \frac{q-2}{2}.\]
From this we can show that $P(q)\ge q^{3/2}/23$ for $q\ge 17$. By direct 
computation we find that $P(9) = 6$, $P(11) = 3$, and $P(13) = 6$, so we
have $P(q)\ge q^{3/2}/23$ for all $q>7$.

To prove the upper bounds on $P(q)$, we use Lemmas~\ref{L:boundM} 
and~\ref{L:boundN} to see that
\begin{align*}
P(q) &= \sum_{\stackrel{t^2 \le 4q}{\scriptscriptstyle t \text{\ even}}}
       \binom{M(q,t)}{2} 
     < \frac{81}{2}
        \sum_{\stackrel{t^2 \le 4q}{\scriptscriptstyle t \text{\ even}}}
         N(q,t)^2\\[1ex]
     & \le \frac{81}{2} (2\sqrt{q} + 1) \begin{cases}
        d_4^2\, q (\log q)^2 (\log\log q)^4 & \text{in general;}\\[1ex]
        d_5^2\, q (\log\log q)^6 & \text{if GRH holds.}
        \end{cases}
\end{align*}
The upper bounds in the theorem follow.
\end{proof}

\subsection{Gathering data}
\label{S:DataJustTheCurves}
Proposition~\ref{P:modulispace} and the ideas in 
Section~\ref{ssec:countingcurves} allow us to quickly compute the exact value of
$P(q)$ when $q$ is not too large. The subtleties in the computation include 
computing the objects $(E,P)$ that lie in $\CX_{-8}$ and in $\CX_{-4}$, and, for
each even trace~$t$, determining the number of elliptic curves with trace $t$ 
that have exactly one point of order $2$ and the number that have exactly three
points of order~$2$. This latter question is answered by noting that an elliptic
curve with Frobenius endomorphism $\pi$ has three rational points of order $2$ 
if and only if $(\pi-1)/2$ lies in its endomorphism ring, and by noting that the
number of curves with trace $t$ and with a given endomorphism ring can be 
computed from a class number; see \cite[Theorem~4.5, p.~194]{Schoof1987}.

Further details of our method of computing $P(q)$ can be found in the comments
of the Magma code we used to do so, which can be found as supplementary material
with the arXiv version of this paper as well as on the fourth author's web page:
\url{http://ewhowe.com/papers/paper51.html}

For $15\le n \le 24$, we computed the values of $P(q)/q^{3/2}$ for the $1024$
odd prime powers $q$ closest to $2^n$. For each of these sets of $1024$ prime
powers we also computed the standard deviation and the minimum and maximum 
values of $P(q)/q^{3/2}$. These values are presented in
Table~\ref{Table:JustCurves}. From Theorem~\ref{T:justthecurves} and this data,
it seems reasonable to model $P(q)$ as growing like a constant times $q^{3/2}$.

\begin{table}[ht]
\centering
\begin{tabular}{rrcccc}
\toprule
$n$ && Mean & S.d. & Max & Min \\
\midrule
15 && 0.42025 & 0.01958 & 0.45974 & 0.38473 \\
16 && 0.42188 & 0.01949 & 0.45828 & 0.38732 \\
17 && 0.42270 & 0.01953 & 0.45792 & 0.38845 \\
18 && 0.42394 & 0.01939 & 0.45914 & 0.38996 \\
19 && 0.42406 & 0.01955 & 0.45865 & 0.39078 \\
20 && 0.42464 & 0.01917 & 0.45862 & 0.39105 \\
21 && 0.42514 & 0.01942 & 0.45851 & 0.39203 \\
22 && 0.42577 & 0.01922 & 0.45830 & 0.39248 \\
23 && 0.42527 & 0.01937 & 0.45843 & 0.39262 \\
24 && 0.42557 & 0.01938 & 0.45853 & 0.39276 \\
\bottomrule
\end{tabular}
\vskip 2ex
\caption{Data for isogenous curves. For each $n$, we give the mean, standard
deviation, and extremal values of $P(q)/q^{3/2}$, where $q$ ranges over the
$1024$ odd prime powers closest to $2^n$.}
\label{Table:JustCurves}
\end{table}

\begin{remark}
\label{rem:nonisomorphic}
The quantity $P(q)$ was defined so that it counts the number of unordered pairs
$\{Z_1,Z_2\}$ of \emph{non-isomorphic} curves with $D_4$-action and with
isogenous Jacobians, because clearly $Z_1$ and $Z_2$ will have isogenous
Jacobians if in fact they are the same curve. There's another ``easy'' way that
two curves can have isogenous Jacobians: If $Z_1$ and $Z_2$ are curves over a
proper extension $\Fq$ of $\FF_p$ that are Galois conjugates of one another,
their Jacobians will be isogenous to one another via some power of the Frobenius
isogeny. If $q = p^e$, these Galois conjugate pairs account for $\Theta(e^2 q)$ 
of all the isogenous pairs over $\Fq$, which is an increasingly small fraction 
of the value of $P(q)$ as $q\to\infty$. However, when we consider the more 
uncommon doubly isogenous pairs in later sections, we will want to specifically
exclude Galois conjugate pairs from our counts.
\end{remark}

\section{Initial heuristics and data for doubly isogenous curves} 
\label{sec:heuristicdoublenaive}

By Theorem \ref{T:justthecurves}, the number of unordered pairs
$\{Z_{1},Z_{2}\}$ of genus-$2$ curves over $K = \Fq$ with $D_4$-action and with
isogenous Jacobians is proportional to $q^{3/2}$, up to logarithmic factors. 
The frequency naturally decreases if, in addition, we require that $Z_{1}$ and
$Z_{2}$ be doubly isogenous. In this section, we present our initial heuristic
about the expected number of doubly isogenous curves over $\Fq$ and some data
that we use to test the heuristic. 

\begin{remark}
\label{rmk:basepoint}
Recall that we associate to a pointed curve $(C,P)$ the cover $\Ctilde\to C$ 
obtained as the pullback of the multiplication-by-$2$ map on $\Jac(C)$ by the
Abel--Jacobi map corresponding to the base point~$P$. We say that two pointed
curves $(C_1,P_1)$ and $(C_2,P_2)$ are doubly isogenous if (the Jacobians of)
$C_1$ and $C_2$ are isogenous and (the Jacobians of) $\Ctilde_1$ and $\Ctilde_2$
are isogenous. When we are considering a genus-$2$ curve with $D_4$-action and 
with all Weierstrass points rational, there is a natural choice for a base 
point: one of the two Weierstrass points whose stabilizer under the $D_4$ action
has size~$4$. (As we saw in Section~\ref{ss:curvescovers}, it does not matter
which of these two Weierstrass points we choose, because they give isomorphic
covers.) Throughout the rest of this paper, in accordance with 
Remark~\ref{rmk:DifferentBasepoint}, when we say without further comment that
two genus-$2$ curves with $D_4$-action are doubly isogenous, we mean 
\emph{with respect to this choice of base point.}
\end{remark}

\subsection{An initial heuristic for doubly isogenous curves} 
\label{ss:initialheuristic}
As we noted in Remark~\ref{rem:nonisomorphic}, two curves $Z_1$ and $Z_2$ over a
finite field $K$ are trivially doubly isogenous if they are Galois conjugates of
one another. This observation influences the following definition.

\begin{definition}
\label{def:delta}
Let $\delta(q)$ be the number of unordered pairs $\{Z_1, Z_2\}$ of doubly
isogenous curves over $\Fq$, where $Z_1$ and $Z_2$ are genus-$2$ curves with
$D_4$-action and all Weierstrass points rational, and where $Z_1$ and $Z_2$ are
not Galois conjugates of one another.
\end{definition}

We formulate a heuristic to estimate $\delta(q)$ that we label as ``na\"{\i}ve''
because it turns out not to match the data we gathered. Later in the paper, we
explain this discrepancy and improve the heuristic. 

\begin{fheuristic} \label{FH:DoublyIsogenous}
For a fixed odd prime power $q$, we model the double-isogeny class of $Z$ as a
six-tuple of independent random elliptic curves over $\FF_q$.
\end{fheuristic}

\begin{proof}[Justification]
By Corollary \ref{cor:jacstr}, if all of the Weierstrass points of $Z$ are
rational then $\Jac(\Ztilde)$ decomposes into a sum of $17$ elliptic curves, two
of which are $E$. The remaining $15$ elliptic curves fall into six orbits under
the action of $D_4$, as in Table~\ref{tab:orbits}. The elliptic curve in Orbit
$2C$ does not depend on~$s$.

Suppose $Z_1$ and $Z_2$ are genus-$2$ curves with $D_4$-action, lying over
elliptic curves $E_1$ and $E_2$ as in Lemma~\ref{lem:quot1}. For $Z_1$ and $Z_2$
to be doubly isogenous over $\Kbar$, there must be six geometric isogenies of
elliptic curves, one between $E_1$ and $E_2$ and an additional five for the
non-constant orbits. (These five isogenies may be between orbits with different
labels; for example, orbit 2A for one curve may be isogenous to orbit 2B for the
other. This only affects the probability that that the five isogenies exist by a
constant factor.) With positive probability, a geometric isogeny
$\Jac(\Ztilde_{1}) \sim \Jac(\Ztilde_{2})$ comes from a $K$-rational isogeny,
because all the elliptic curves have a bounded number of twists.  Thus it is
reasonable to model the double-isogeny class of $Z$ as six random elliptic
curves.
\end{proof}

Let $n$ be the number of isomorphism classes of genus-$2$ curves over $K$ with
$D_4$-action and rational Weierstrass points.  It follows from the
parametrization in terms of the variable $t$ given in
\eqref{EQ:D4withWeierstrass} that $n \asymp q$; the exact count is irrelevant
for our purposes. Choose $n$ six-tuples of random elliptic curves over $\FF_q$,
and denote them by $(E_{i,1}, \ldots E_{i,6})$ for $i \in \{1,\ldots, n\}$.
Define the set 
\[
S_q \colonequals 
\{ (i,j) : i \neq j \text{ and } 
           E_{i,a} \sim E_{j,a} \text{ for } a =1,2,\ldots, 6\}
\subseteq \{1,\ldots,n\}^2.
\]

\begin{lemma} \label{lem:ev1}
The expected value of $\# S_q$ \textup(which is the prediction of Na\"{\i}ve
Heuristic~\textup{\ref{FH:DoublyIsogenous}} for the number of pairs of doubly
isogenous curves\textup) satisfies 
\[\EE(\# S_q) \asymp  1/q.\]
\end{lemma}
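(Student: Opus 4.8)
The plan is to compute $\EE(\# S_q)$ directly from linearity of expectation. For each ordered pair $(i,j)$ with $i \neq j$, the probability that $(i,j) \in S_q$ is the probability that six independent uniformly random elliptic curves over $\FF_q$ match six other independent uniformly random elliptic curves, isogeny class by isogeny class. Since the tuples for distinct indices are chosen independently, and since the six coordinates within each tuple are independent, this probability factors as $\prod_{a=1}^{6} \Pr[E_{i,a} \sim E_{j,a}]$, where each factor is the probability that two independent random elliptic curves over $\FF_q$ are isogenous to each other. So the whole computation reduces to estimating a single quantity: the probability $\pi_q$ that two independent uniform random elliptic curves over $\FF_q$ lie in the same isogeny class.

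First I would make precise what ``random elliptic curve over $\FF_q$'' means — presumably uniform over the finite set of $\FF_q$-isomorphism classes, of which there are $\asymp q$ (more precisely $2q + O(\sqrt q)$). Then $\pi_q = \sum_t (N(q,t)/N_q)^2$, where $N_q = \sum_t N(q,t) \asymp q$ is the total number of elliptic curves and $N(q,t)$ is as in the paper (the number with trace $t$), the sum running over $|t| \le 2\sqrt q$. The key input is that $N(q,t) \asymp \sqrt q$ for the vast majority of $t$ — this is exactly the content of the Deuring/Schoof class-number formulas together with the bounds cited in Lemma~\ref{L:boundN} (from \cite{Schoof1987} and \cite{AchterHowe2017}): up to logarithmic or $\log\log$ factors, $N(q,t) \asymp \sqrt q$, and since there are $\asymp \sqrt q$ admissible values of $t$, this is consistent with $N_q \asymp q$. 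From $\sum_t N(q,t) \asymp q$ and $N(q,t) \lesssim \sqrt q \cdot (\text{polylog})$ one gets by Cauchy--Schwarz that $\sum_t N(q,t)^2 \gtrsim q^2/\sqrt q = q^{3/2}$, and the upper bound $\sum_t N(q,t)^2 \lesssim \sqrt q \cdot \max_t N(q,t)^2 \lesssim q^{3/2} \cdot (\text{polylog})$, so $\sum_t N(q,t)^2 \asymp q^{3/2}$ up to these slowly-varying factors. Hence $\pi_q = \sum_t N(q,t)^2 / N_q^2 \asymp q^{3/2}/q^2 = q^{-1/2}$, again up to polylog factors, and then $\pi_q^6 \asymp q^{-3}$.

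Finally I would assemble the answer: $\EE(\#S_q) = \sum_{i \neq j} \pi_q^6 = n(n-1)\,\pi_q^6 \asymp q^2 \cdot q^{-3} = 1/q$, using $n \asymp q$ from the $t$-parametrization noted in the paper. One point to watch is that the logarithmic and $\log\log$ factors in the bounds for $N(q,t)$ do not actually affect the $\asymp$ in the statement: the lower bound $N(q,t) \gtrsim \sqrt q$ holds (up to a constant, for a positive proportion of $t$) \emph{without} needing to divide by any log factor, so the lower bound $\pi_q \gtrsim q^{-1/2}$ is clean; only the upper bound carries polylog losses, and since the lemma only claims $\EE(\#S_q) \asymp 1/q$ rather than an asymptotic, I should be slightly careful about which direction each inequality goes. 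The cleanest route is probably: $\pi_q \ge c_1 q^{-1/2}$ for an absolute constant (lower bound, no logs), giving $\EE(\#S_q) \ge c_1^6 q^{-3} \cdot n(n-1) \gtrsim q^{-1}$; and $\pi_q \le c_2 q^{-1/2} (\log q)^6$ or so (upper bound, with logs), giving $\EE(\#S_q) \lesssim q^{-1}(\log q)^{36}$. So strictly speaking the ``$\asymp 1/q$'' should be read as ``equal to $1/q$ up to slowly varying factors,'' and the main obstacle is just being honest in the write-up about whether the statement is claiming this loose bound or the clean asymptotic $\EE(\#S_q) \sim C/q$ — the latter would actually follow if one invokes the known asymptotic $\sum_t N(q,t)^2 \sim \kappa\, q^{3/2}$ for an explicit constant $\kappa$ (which does hold and is essentially due to the equidistribution of Frobenius traces / the Eichler--Selberg trace formula), and I would cite that to get the clean statement if desired.
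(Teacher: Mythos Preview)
Your proposal is correct and follows essentially the same approach as the paper: linearity of expectation gives $\EE(\#S_q) = n(n-1)\,\pi_q^6$, and then $n \asymp q$ together with $\pi_q \asymp q^{-1/2}$ yields $\asymp 1/q$. The paper's proof is a two-line sketch that simply cites Section~\ref{sec:heuristicjustthecurves} for the fact that two random elliptic curves over $\FF_q$ are isogenous with probability $\Theta(q^{-1/2})$; you supply the underlying Cauchy--Schwarz/class-number justification for this fact in more detail than the paper does, and your caveat about polylog factors in the upper bound is a fair point that the paper glosses over (the lemma is, after all, a heuristic prediction rather than a theorem).
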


\begin{proof}
There are $\Theta(q^2)$ pairs of $(i,j)$, and in 
Section~\ref{sec:heuristicjustthecurves} we showed the probability of two random
elliptic curves over $\Fq$ being isogenous is $\Theta(q^{-1/2})$.
\end{proof}

Thus Na\"{\i}ve Heuristic~\ref{FH:DoublyIsogenous} predicts that the ``expected
value'' of $\delta(q)$ is $\asymp 1/q$.  As we will see, this does not match the
data we gathered; the assumption that the $6$ elliptic curves are independent
does not turn out to be completely accurate.

\subsection{Data that does not support the na\"{\i}ve heuristic}
\label{ssec:nonsupporting}

In order to calculate $\delta(q)$ for specific values of $q$, we first want to
enumerate all isomorphism classes of genus-$2$ curves $Z$ over $K\cong\Fq$ with
$D_4$-action and with all Weierstrass points rational. To do so, we vary $t$ 
in~\eqref{EQ:D4withWeierstrass}. Letting $s = -(t^4+1)/t^2$ and taking into
account the involution $s\leftrightarrow s'$, we see that the following values
of $t$ give isomorphic curves:
\begin{equation} \label{eq:choicest}
    \pm t,                             \quad  
    \pm \biggl(\frac{1}{t}\biggr),     \quad 
    \pm \biggl(\frac{t-1}{t+1}\biggr), \quad 
    \pm \biggl(\frac{t+1}{t-1}\biggr).
\end{equation}
To enumerate isomorphism classes, we fix an ordering of the elements of~$K$ and 
only consider values of $t$ for which $s \neq \pm 2$ (so $Z$ is non-singular) 
and for which $t$ is the smallest of the values in~\eqref{eq:choicest}. We then
include the curve $Z$ from \eqref{EQ:D4withWeierstrass} and its standard 
quadratic twist in our enumeration. If $w$ is a fixed quadratic non-residue 
of~$K$, that means we look at
\[
    y^2 =  (x^2 + 1)(x^4 + s x^2 + 1)\textup{\quad and\quad}
    y^2 = w(x^2 + 1)(x^4 + s x^2 + 1).
\]
(Recall from Lemma~\ref{L:moduli2} that if $s=-6$ and $-2$ is not a square 
in~$K$, the curve $Z$ is isomorphic to its standard quadratic twist. However,
when $s = -6$ we have $t = \pm 1\pm \sqrt{2}$, so when the Weierstrass points of
$Z$ are rational, the exceptional case in Lemma~\ref{L:moduli2} does not occur.)

\begin{table}[ht]
\centering
\begin{minipage}{.5\textwidth}
  \centering
  \begin{tabular}{c r@{}r}
    \toprule
  $n$ & \multicolumn{2}{c}{Examples} \\
\midrule
15 & \hbox to 1.5em{} 820 & \\
16 &                  580 & \\
17 &                  407 & \\
18 &                  282 & \\
19 &                  218 & \\
20 &                  138 & \\
21 &                  100 & \\
22 &                   58 & \\
23 &                   42 & \\
 \bottomrule
\end{tabular}
\vskip 2ex
  \caption*{A. Data for doubly isogenous curves
            \phantom{filler}}
\end{minipage}
\begin{minipage}{.49\textwidth}
  \centering
    \begin{tabular}{c r}
    \toprule
  $n$ & \multicolumn{1}{c}{Examples} \\
\midrule
15 &   11690160 \\ 
16 &   23837994 \\ 
17 &   48443688 \\ 
18 &   97608276 \\ 
19 &  196343212 \\ 
20 &  394584130 \\ 
21 &  793839836 \\ 
22 & 1588282776 \\ 
23 & 3172154548 \\ 
 \bottomrule
\end{tabular}
\vskip 2ex
  \caption*{B. Data for $[1-\rho^*]$-isogenous curves 
            (see Example~\ref{ex:heurzeta})}
\end{minipage}
\caption{
The total number of unordered pairs of doubly isogenous curves and
$[1-\rho^*]$-isogenous curves over $\Fq$ for the $1024$ primes
$q\equiv 1\bmod 4$ closest to $2^n$, restricting to curves with all Weierstrass 
points rational.}
\label{Table:TwoTables}
\end{table}

In Table~\ref{Table:TwoTables}(A), we present some data that we collected by
enumerating doubly isogenous pairs. For $n$ ranging from $15$ to $23$, we
considered the $1024$ primes $q\equiv 1\bmod 4$ closest to $2^n$ and computed
the sum of $\delta(q)$ over these values of $q$. According to Na\"{\i}ve
Heuristic~\ref{FH:DoublyIsogenous} and Lemma~\ref{lem:ev1}, we would expect this
sum to have rate of growth of the form $c/2^n$ for some constant $c$. In
particular, we would expect the sum to approximately halve as we increase $n$
by~$1$. This is not what we observe. We explain this discrepancy in the next
section by finding several families of coincidences that cause doubly isogenous
pairs to occur more often than predicted.  

We can similarly develop heuristics for covers corresponding to subgroups of
$\Jac(Z)[2]$. 

\begin{definition}
Let $H$ be a subgroup of $\Jac(Z)[2]$.  We say that $Z_1$ and $Z_2$ are
\emph{$H$-isogenous} if $\Jac(Z_1)$ and $\Jac(Z_2)$ are isogenous and
$\Jac(\widetilde{Z}_1^{H})$ and $\Jac(\widetilde{Z}_2^H)$ are isogenous, where
$\widetilde{Z}_1^{H}$ and $\widetilde{Z}_2^H$ are as defined in 
Definition~\ref{DpiH}.
\end{definition}

(When $H=0$ we recover the definition of doubly isogenous curves.)

Proposition~\ref{prop:decomposecover} gives a decomposition of 
$\Jac(\Ztilde^H)$, and Table \ref{tab:orbits} lets us identify the elliptic
curves appearing in this decomposition. In particular, if $m$ is the number of
different non-constant orbits of elliptic curves corresponding to the
$2$-torsion points in $H^\perp$, then we expect 
\begin{equation} \label{EheuristicdoubleH}
\#\{ H \text{-isogenous pairs } /\Fq  \} \asymp q^{(3-m)/2} .
\end{equation}

\begin{example} \label{ex:heurzeta}
For example, take $H \colonequals \ker(1-\rho^*)$, where $\rho$ is the 
automorphism of order~$4$ defined in Section~\ref{ssec:automorphisms}. The 
cover $\Ztilde^H\to Z$ has degree four, and since $(1-\rho^*)^2 = -2\rho^*$, it is 
isomorphic to the pullback of the endomorphism $1 - \rho^*$ on $\Jac(Z)$ via the
embedding $Z \to \Jac(Z)$.  When $Z_1$ and $Z_2$ are $H$-isogenous for this $H$,
we say that $Z_1$ and $Z_2$ are \emph{$[1-\rho^*]$-isogenous}. The Jacobian
$\Jac(\Ztilde^H)$ contains orbits 1 and 2C (in addition to~$E^2$). Since 
Orbit~2C is constant, $m=1$ and we only need a single coincidence for
$\Jac(\Ztilde_{1}^H)$ and $\Jac(\Ztilde_{2}^H)$ to be isogenous.  Thus we 
expect 
\begin{equation} \label{Eheurzeta}
\#\{[1-\rho^*]\text{-isogenous pairs } /\Fq \} \asymp q .
\end{equation}
We expect the total number of pairs of $[1-\rho^*]$-isogenous curves for the
$1024$ primes $q\equiv 1\bmod 4$ closest to $2^n$ to have rate of growth 
$c \cdot 2^n$ for some constant $c$, and this is supported by the data in 
Table~\ref{Table:TwoTables}(B), as the number of pairs roughly doubles as we
increase $n$ by~$1$.  
\end{example}

\section{Families with unexpected coincidences} \label{sec:unexpected}
Na\"{\i}ve Heuristic~\ref{FH:DoublyIsogenous} predicts that the ``expected
value'' of $\delta(q)$ is on the order of $1/q$, where $\delta(q)$ is the number
of non-conjugate pairs $\{Z_1, Z_2\}$ of doubly isogenous curves over a finite
field $\Fq$ with $Z_1$ and $Z_2$ genus-$2$ curves with $D_4$-action and all
Weierstrass points rational. As seen in Table~\ref{Table:TwoTables}(A), the data
we collected does not seem to reflect this rate of growth. In this section, we
find a number of families of coincidences that explain this discrepancy and we
formulate a more sophisticated heuristic for the number of such pairs, which
will be supported by the data in Section~\ref{ss:heuristicsfamily}.

\subsection{\texorpdfstring{$j$}{j}-invariants for orbits}
\label{ssec:t}

We begin by computing the $j$-invariants of the elliptic curves appearing in
Table~\ref{tab:orbits}; the middle column of Table~\ref{tab:orbitjs} gives these
$j$-invariants in terms of the parameter $t$. For our computations, it will be
convenient to note that we can also express these $j$-invariants in terms of the
quantity 
\[
    u\colonequals (1/2)(t - 1/t),
\]
as is shown in the third column of Table~\ref{tab:orbitjs}. This new
parametrization simplifies our computations in 
Section~\ref{ss:findingisogenies}, because $I = (u + 1/u)^2$ is a quartic
function of $u$ instead of a degree-$8$ function of $t$. We omit the proofs of
the following two facts. 

\begin{table}[ht]
\centering
\begin{tabular}{l@{\qquad}c@{\qquad}c}
\toprule
      & $j$-invariant, in terms & $j$-invariant, in terms\\
Orbit & of the variable $t$     & of the variable $u$    \\
\midrule
 & \\[-2ex] 
\quad 1  & $\displaystyle\frac{ 2^4 (t^8 + 14t^4 + 1)^3}{(t^5 - t)^4}$                                 & $\displaystyle\frac{ 2^8 (u^4 + u^2 + 1)^3}{ u^4 (u^2 + 1)^2}$    \\[3ex]
\quad 2A & $\displaystyle\frac{-2^4 (t^4 - 14t^2 + 1)^3}{t^2(t^2 + 1)^4}$                              & $\displaystyle\frac{-2^6 (u^2 - 3)^3}{(u^2 + 1)^2}$               \\[3ex]
\quad 2B & $\displaystyle\frac{ 2^6 (3t^4 - 10t^2 + 3)^3}{(t^2 - 1)^2 (t^2 + 1)^4}$                    & $\displaystyle\frac{ 2^6 (3u^2 - 1)^3}{(u^3 + u)^2}$              \\[3.5ex]
\quad 2C & $1728$                                                                                      & $1728$                                                            \\[2ex]
\quad 4A & $\displaystyle\frac{-2^6(t^4 - 2\zeta t^3 - 6 t^2 + 2\zeta t + 1)^3}{(t^3-t)^2(t-\zeta)^4}$ & $\displaystyle\frac{-2^8 (u^2 - \zeta u - 1)^3}{(u^2-\zeta u)^2}$ \\[3ex]
\quad 4B & $\displaystyle\frac{-2^6(t^4 + 2\zeta t^3 - 6 t^2 - 2\zeta t + 1)^3}{(t^3-t)^2(t+\zeta)^4}$ & $\displaystyle\frac{-2^8 (u^2 + \zeta u - 1)^3}{(u^2+\zeta u)^2}$ \\[2ex]
\bottomrule
\end{tabular}
\vskip 2ex
\caption{The $j$-invariants for the elliptic curves in Table~\ref{tab:orbits} in 
terms of $t$ and $u = \frac{1}{2}(t - 1/t)$.}
\label{tab:orbitjs}
\end{table}

\begin{lemma}
\label{rem:changeu}
Replacing $u$ with $-u$, $1/u$, or $-1/u$ does not change the value of $I$.
\qed
\end{lemma}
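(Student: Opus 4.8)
The plan is to verify directly that each of the three substitutions on $u$ corresponds to one of the known symmetries on the parameter $s$ (equivalently, on the curve $Z$), and then invoke the fact that $I$ was already shown to be an invariant of $Z$. First I would recall from \eqref{Estformula} and the definition $u = \tfrac12(t - 1/t)$ that $s = -(t^4+1)/t^2 = -(t^2 + 1/t^2)$, and a short computation gives $t^2 + 1/t^2 = (t - 1/t)^2 + 2 = 4u^2 + 2$, so
\[
s = -4u^2 - 2, \qquad\text{equivalently}\qquad u^2 = -\frac{s+2}{4}.
\]
Hence $u$ and $-u$ give the same $s$, which disposes of the substitution $u \mapsto -u$ immediately. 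For the substitution $u \mapsto 1/u$, I would compute the resulting value of $s$, call it $s''$: from $u^2 = -(s+2)/4$ we get that replacing $u$ by $1/u$ sends $s+2$ to $16/(s+2)$, i.e. $(s''+2)(s+2) = 16$. But this is exactly the relation appearing in Lemma~\ref{L:moduli2} (with $c'' = 2c(s+2)$), so $Z$ with parameter $s''$ is isomorphic to $Z$ with parameter $s$; and the combination $u \mapsto -1/u$ is the composite of the two substitutions already handled.

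Once the three substitutions are matched with isomorphisms of $Z$ (or with the trivial identity on $s$), I would conclude by citing the discussion in Section~\ref{sec:invariant}: the function $I(s) = -(s-2)^2/\bigl(4(s+2)\bigr)$ is a geometric invariant of the curve $Z$, stable under $s \leftrightarrow s'$ where $s' = (-2s+12)/(s+2)$. Since $I = (u + 1/u)^2$ (which one checks by substituting $u^2 = -(s+2)/4$ into the right-hand side and simplifying to $-(s-2)^2/\bigl(4(s+2)\bigr)$), and since each of the substitutions $u \mapsto -u, 1/u, -1/u$ either fixes $s$ or sends it to $s'$, the value of $I$ is unchanged in every case. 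Alternatively, and perhaps more cleanly, one can simply observe that $(u+1/u)^2$ is manifestly invariant under $u \mapsto 1/u$ and under $u \mapsto -u$ (since these only permute and negate the two summands $u$ and $1/u$ together), and hence under the group they generate, giving a purely formal one-line verification with no reference to $s$ at all.

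The only mild subtlety — and the step most worth spelling out — is the identification $I = (u+1/u)^2$ itself, i.e. checking that this quartic-in-$u$ expression really equals the degree-$2$-in-$s$ invariant $I(s)$ defined in \eqref{eq:invariant}; but this is a routine substitution using $s = -4u^2-2$ and $s+2 = -4u^2$, giving $(u+1/u)^2 = (u^2+1)^2/u^2 = \bigl((s+2)/4 - 1\bigr)^2 \big/ \bigl(-(s+2)/4\bigr) \cdot(-1)\cdot(-1)$, which rearranges to $-(s-2)^2/\bigl(4(s+2)\bigr)$ after noting $(s+2)/4 - 1 = (s-2)/4$. I expect no genuine obstacle here; the lemma is essentially a bookkeeping statement, and the cleanest writeup is the formal symmetry argument noting that $u \mapsto -u$ and $u \mapsto 1/u$ each preserve the unordered, simultaneously-signed pair $\{u, 1/u\}$ up to the operations built into the symmetric expression $(u+1/u)^2$.
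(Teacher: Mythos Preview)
Your proposal is correct. The paper omits the proof entirely: immediately before the lemma it records the identity $I = (u + 1/u)^2$, after which the invariance under $u\mapsto -u$, $u\mapsto 1/u$, and $u\mapsto -1/u$ is visibly trivial, so the authors simply place a \qed. Your ``alternative'' one-line formal symmetry argument is exactly this intended proof; the longer detour through $s$ and $s'$ is correct but unnecessary, since the paper has already supplied the key identity.
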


\begin{lemma}
\label{rem:BaseCurve}
The elliptic curve $E$ defined by Equation~\eqref{EdefE} has $j$-invariant 
\[
\frac{2^8(t^4 - t^2 + 1)^3}{t^4 (t^2 - 1)^2} 
 = \frac{2^6 (4 u^2 + 1)^3 }{u^2},
 \]
and is isomorphic to $y^2 = dx(x-1)(x-\lambda)$, where $\lambda = t^2$ and 
$d = c(t^2 + 1)$. \qed
\end{lemma}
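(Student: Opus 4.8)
The plan is to first factor the cubic defining $E$ using the relation \eqref{Estformula}, then put $E$ into Legendre form by an explicit affine change of the $x$-coordinate followed by a rescaling of $y$, and finally read off the $j$-invariant from the classical Legendre formula. Both displayed expressions for the $j$-invariant are then identified by a short algebraic manipulation.

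First I would use \eqref{Estformula}, which says $s = -(t^4+1)/t^2 = -(t^2 + 1/t^2)$, to observe that the quadratic factor in \eqref{EdefE} splits as $x^2 + sx + 1 = (x - t^2)(x - 1/t^2)$: the two roots have product $1$ and sum $-s$. Hence $E$ is the curve $y^2 = c(x+1)(x-t^2)(x-1/t^2)$, whose three $2$-torsion $x$-coordinates are $-1$, $t^2$, and $1/t^2$. Next I would apply the affine substitution $x \mapsto \frac{t^2+1}{t^2}\,x - 1$, which sends $x = 0$ to the root $-1$, sends $x = t^2$ to the root $t^2$ (since $\frac{t^2+1}{t^2}\cdot t^2 - 1 = t^2$), and sends $x = 1$ to the root $1/t^2$. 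This transforms $E$ into $y^2 = c\bigl(\tfrac{t^2+1}{t^2}\bigr)^{3}\, x(x-1)(x-t^2)$. Since $\bigl(\tfrac{t^2+1}{t^2}\bigr)^{3} = (t^2+1)\cdot\bigl(\tfrac{t^2+1}{t^3}\bigr)^{2}$, rescaling $y$ by the factor $\tfrac{t^3}{t^2+1}\in K^\times$ brings the curve into the form $y^2 = d\,x(x-1)(x-\lambda)$ with $\lambda = t^2$ and $d = c(t^2+1)$, which is the second assertion of the lemma.

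Finally I would compute $j(E)$ from the formula $j = 2^8(\lambda^2 - \lambda + 1)^3/\bigl(\lambda^2(\lambda-1)^2\bigr)$ with $\lambda = t^2$, obtaining $j(E) = 2^8(t^4 - t^2 + 1)^3/\bigl(t^4(t^2-1)^2\bigr)$. To pass to the $u$-parametrization, note that $u = \tfrac12(t - 1/t)$ gives $4u^2 + 1 = (t^4 - t^2 + 1)/t^2$ and $u^2 = (t^2-1)^2/(4t^2)$, so that $2^6(4u^2+1)^3/u^2 = 2^8(t^4 - t^2 + 1)^3/\bigl(t^4(t^2-1)^2\bigr)$, matching the first expression.

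I do not expect a genuine obstacle here: everything is an explicit low-degree computation. The only points requiring care are choosing the affine substitution so that the three roots land in the specific order that produces exactly $\lambda = t^2$ and $d = c(t^2+1)$, rather than one of the other Legendre-equivalent normalizations, and keeping track of the square factor $\bigl(\tfrac{t^2+1}{t^3}\bigr)^2$ that is absorbed into the quadratic twist when rescaling $y$.
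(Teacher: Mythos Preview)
Your computation is correct in every detail: the factorization of $x^2+sx+1$ via \eqref{Estformula}, the affine change of variable sending $\{0,1,t^2\}$ to $\{-1,1/t^2,t^2\}$, the extraction of the square factor $\bigl(\tfrac{t^2+1}{t^3}\bigr)^2$ to isolate the twist $d=c(t^2+1)$, the Legendre $j$-formula, and the conversion to $u$ all check out. The paper itself omits the proof entirely (the statement carries a bare \qed), so there is nothing to compare against; your argument is exactly the kind of routine verification the authors chose to suppress.
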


\subsection{Finding generic geometric isogenies} 
\label{ss:findingisogenies} 

In this subsection, we work over an algebraically closed field $K$ of
characteristic not~$2$. Our goal is to find families of ordered pairs
$(Z_1,Z_2)$ of genus-$2$ curves over $K$ with $D_4$-action that have a
higher-than-expected chance of being doubly isogenous. For example, we might
search for families where the elliptic curves in some of the orbits listed in
Table~\ref{tab:orbits} for $Z_1$ are automatically isogenous to those in some of
the orbits for $Z_2$.  We carry out this search by using classical modular
polynomials $\Phi_n\in\ZZ[x,y]$.  (Recall that the polynomial $\Phi_n$ has the
property that there is a geometric cyclic $n$-isogeny between two elliptic
curves over an arbitrary field $K$ if and only if the $j$-invariants $j_1$ and
$j_2$ of the two curves satisfy $\Phi_n(j_1,j_2) = 0$; see \cite{Igusa1959},
\cite{Igusa1968}.)

Let $Z_1$ and $Z_2$ be two genus-$2$ curves over $K$ with $D_4$-action, and let
$I_1$ and $I_2$ be their respective invariants; see Section~\ref{sec:invariant}. 
We can write each $Z_i$ in the form~\eqref{EQ:D4withWeierstrass}; that is, there
are $c_i, t_i\in K$ such that $Z_i$ is given by
\begin{align} \label{eq:zi}
Z_i \colon\quad y^2 &= c_i (x-\zeta)(x+\zeta)(x-t_i)(x+t_i)(x-1/t_i)(x+1/t_i)\\
                    &= c_i (x^2+1)(x^4+s_i x^2+1), \notag
\end{align}
where $s_i = -(t_i^4+1)/t_i^2$. Since $K$ is algebraically closed, we may take
$c_1 = c_2 = 1$.

As we observed in Section~\ref{ssec:t}, the orbit labels for $Z_1$ and $Z_2$ 
are determined by the values of $u_i = (1/2)(t_i - 1/t_i)$, which satisfy 
$I_i = (u_i + 1/u_i)^2$. It follows that $u_1$ and $u_2$ are reasonable
parameters to use for the families we construct. 

To find families of ordered pairs $(Z_1,Z_2)$ with a cyclic isogeny of degree
$n$ between specified orbits, we work over the algebraic closure $K$ of the
$2$-variable function field $\QQ(\zeta)(u_1,u_2)$, and consider the curves $Z_1$
and $Z_2$ with parameters $t_1, t_2 \in K$ such that $u_i = (1/2)(t_i - 1/t_i)$.
Given an orbit for $Z_1$ and an orbit for $Z_2$, we can plug the appropriate
formulas for the $j$-invariants of the orbits into $\Phi_n$ in order to obtain
an expression in $u_1$ and $u_2$ which is zero if and only if there is a cyclic
$n$-isogeny between the curves in the given orbits.

\begin{example} \label{ex:11}
Let us calculate conditions under which the Orbit 1 elliptic curve for $Z_1$ is
geometrically isomorphic to the Orbit 1 elliptic curve for $Z_2$. This 
calculation is simpler than most, because the $j$-invariants of the Orbit 1
elliptic curves can in fact be expressed directly in terms of the invariants of
$Z_1$ and $Z_2$; namely, the Orbit 1 $j$-invariant for each curve is 
$256 \cdot (I_i- 1)^3/I_i$. We see that the two $j$-invariants are equal if and
only if 
\[
(I_1-1)^3 I_2 - (I_2-1)^3 I_1 = 0.
\]
The expression on the left-hand side factors as the product of 
$I_1^2 I_2 + I_1 I_2^2 - 3 I_1 I_2 + 1$ and $I_1 - I_2$.

We compute that the condition $I_1^2 I_2 + I_1 I_2^2 - 3 I_1 I_2 + 1 = 0$ is
equivalent to
\[
(u_1^2 + u_2^2 + 1 )
(u_1^2 u_2^2 + u_1^2 + 1 )
(u_1^2 u_2^2 + u_2^2 + 1 )
(u_1^2 u_2^2 + u_1^2 + u_2^2) = 0\,.
\]
\end{example}

\begin{example} \label{ex:2a2b}
Let us consider the relation between $u_1$ and $u_2$ that is satisfied exactly 
when the elliptic curves in Orbit 2A of $Z_1$ are $2$-isogenous to the elliptic
curves in Orbit 2B of~$Z_2$. We will not write down the full polynomial relation
in $u_1$ and $u_2$ because it involves $94$ terms. We do observe that it factors
over $\QQ(\zeta)$ into nine irreducible polynomials; one of these irreducible
factors is $u_1^2 u_2^2 + u_1^2 + 1$, which also appears in Example~\ref{ex:11}!
Thus, if the single relation $u_1^2 u_2^2 + u_1^2 + 1 = 0$ holds, the Orbit 1
curve for $Z_1$ is isomorphic to the Orbit 1 curve for $Z_2$, \emph{and} the
Orbit 2A  curve for $Z_1$ is $2$-isogenous to the Orbit 2B curve for $Z_2$. 
This is an unexpected coincidence!
\end{example}

In the following subsection we report on what we found by systematically
searching for such coincidences. The ideal but computationally intensive
calculation would be to work over $K$ and consider every pair $(F_1,F_2)$ of
elliptic curves, where each $F_i$ is either the quotient of $Z_i$ given 
by~\eqref{EdefE} or one of the curves in an orbit for $Z_i$. For each positive
integer $n$ in some a predetermined set of values (see
Remark~\ref{rmk:isogenydegrees} for our choice), we would compute an expression
in $u_1$ and $u_2$ that equals zero if and only if there is a cyclic $n$-isogeny
between the curves in the two orbits. Then, for every pair of such expressions,
we would compute their greatest common divisor. Whenever this greatest common
divisor was not $1$, we would find a family of pairs $(Z_1,Z_2)$ of curves
associated to a pair of parameters $(u_1,u_2)$ where there are multiple
isogenies between the elliptic factors of $\Jac(\Ztilde_1)$ and those of
$\Jac(\Ztilde_2)$.

It is computationally difficult to implement the above strategy because when we
substitute the rational functions for the $j$-invariants into all but the
smallest modular polynomials, the expressions become quite large. To reduce the
size of the coefficients in the expressions, and to reduce the number of
monomials involved, we instead work modulo a prime $p\equiv 3\bmod 4$ and 
specialize $u_1$ to a value in $\FF_p(\zeta)$. This yields rational functions in
$\FF_p(\zeta)(u_2)$ which fit much more easily in memory. We are therefore 
looking for cyclic $n$-isogenies between fibers (over $u_2$) of the family $Z_2$
and a fixed fiber of~$Z_1$, after reducing modulo $p$.  

There are two risks associated to making these reductions.  The first is that we
may find spurious relations, nonzero greatest common divisors that occur only
modulo $p$. As it happens, none of the relations we found involved modular
polynomials of high degree, so we were subsequently able to verify the relations
we found over the full ring $\QQ(\zeta)(u_1,u_2)$.

The second risk is that we might miss a family. This could happen, for example,
if there is a relation that involves a polynomial in $\QQ(\zeta)(u_1,u_2)$ that
reduces modulo $p$ to a constant, or to a polynomial like $u_1u_2$ whose
solutions require one of the $u_i$ to be equal to one of the forbidden values
$0$ or $\zeta$. It could also happen if we specialize to a value of $u_1$ that
makes the polynomial constant. Without knowing more about the geometry of the
possible families in characteristic zero, we are not sure how to rule out these
possibilities. We did, however, run our computation several times, with
different choices for the prime $p$ and different choices for the values of
$u_1$, and the results did not vary. Thus, we believe we found all of the
families of coincidences involving isogenies of the degrees we considered. As we
will see in Section~\ref{ss:heuristicsfamily}, we have found enough families to
formulate an improved heuristic that is supported by our data.

\begin{remark}
\label{rmk:isogenydegrees}
We are left to specify the degrees $n$ of the cyclic isogenies we will consider.
We choose to look for cyclic $n$-isogenies for all values of $n$ for which the
modular curve $X_0(n)$ has genus $0$ 
(namely, $n=1, 2, 3, 4, 5, 6, 7, 8, 9, 10, 12, 13, 16, 18,$ and $25$)
or genus $1$ 
(namely, $n=11, 14, 15, 17, 19, 20, 21, 24, 27, 32, 36,$ and~$49$). 
We chose these values so that we would find all families of coincidences given
by a relation between $u_1$ and $u_2$ that defines a curve of geometric genus at
most~$1$. To see that these values of $n$ will lead to all such families, note
that every family we find gives us a varying pair of elliptic curves connected
by a cyclic $n$-isogeny, and so comes provided with a nonconstant map to
$X_0(n)$. Since no family of genus $0$ or $1$ can map to a modular curve with
genus larger than $1$, for our goal of finding all families defined by genus-$0$
and genus-$1$ relations between $u_1$ and $u_2$, it will suffice for us to 
consider the values of $n$ specified above.
\end{remark}

In the end, we found sixteen families of coincidences in terms of $u_1$ and
$u_2$. However, if we count two families as being equivalent if they produce the
same pairs $(Z_1,Z_2)$ --- that is, if one family can be obtained from the other
by applying transformations from Remark~\ref{rem:changeu} to $u_1$ and $u_2$ ---
then we have only four equivalence classes of families. We describe these four 
classes of families in the following section, where we keep close track of 
fields of definition of isogenies.

\begin{remark}
A family given by a relation between $u_1$ and $u_2$ can be made more explicit
by replacing each $u_i$ with $(1/2)(t_i - 1/t_i)$, and then looking at an
irreducible factor of the resulting expression. For example, the relation
between $t_1$ and $t_2$ obtained from the relation $u_1^2 + u_2^2 + 1 = 0$ from
Example~\ref{ex:11} has degree $6$, but it factors into two factors of degree
$1$ and two factors of degree $2$. One of the factors is $t_2 - \zeta t_1$. See
Section~\ref{Subsubsection:FirstFamily}.
\end{remark}

\subsection{Description of the families} 
\label{ssec:families}
\subsubsection{The first family}
\label{Subsubsection:FirstFamily}
Let $K$ be a field of characteristic not $2$ that contains a primitive $4$th
root of unity~$\zeta$. For $i=1,2$, let $c_i$ and $t_i$ be elements of
$K^\times$ with $t_i^4 \ne 1$, let $Z_i$ be given by~\eqref{eq:zi}, and let
$E_i$ be the quotient curve
\begin{equation}
\label{eq:ei}
E_i\colon \quad y^2 = c_i (x + 1)(x - t_i^2)(x - 1/t_i^2).
\end{equation}

\begin{lemma} \label{lem:family1}
Suppose $t_2 = \zeta t_1$, and suppose $(t_1^2+1)(t_2^2+1)$ and $c_1 c_2$ are
squares in~$K$. Then over $K$ the following statements hold\textup{:}
\begin{enumerate}
\item the elliptic curve in Orbit 1 for $Z_1$ is isomorphic to the elliptic
      curve in Orbit 1 for~$Z_2$\textup{;}
\item the elliptic curves in Orbit 2B for $Z_1$ and the elliptic curves in 
      Orbit~2B for $Z_2$ are related by a degree-$2$ isogeny\textup{;}
\item the elliptic curves in Orbit 2C for $Z_1$ are isomorphic to those in 
      Orbit~2C for $Z_2$\textup{;}     
\item the elliptic curve $E_1$ \textup(resp.\ $E_2$\textup) and the elliptic
      curves in Orbit 2A for $Z_2$ \textup(resp.\ $Z_1$\textup) are related by
      a degree-$2$ isogeny.
\end{enumerate}
Furthermore, if $K$ is the algebraic closure of the function field $\QQ(t)$ and
$t_1 = t$, then there are no other isogenies among the orbits associated to
$Z_1$ and~$Z_2$.
\end{lemma}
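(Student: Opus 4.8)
The plan is to reduce everything to one standard fact --- that the locus of potentially multiplicative reduction of a non-isotrivial elliptic curve over a function field is an isogeny invariant --- and then make that fact bite by computing the poles of the relevant $j$-invariants. First I would fix the fourteen elliptic curves in play: with $t_1=t$ and $t_2=\zeta t$ these are $E_1$, $E_2$, and, for $i=1,2$, one curve for each of the six orbits $1,2A,2B,2C,4A,4B$ of Table~\ref{tab:orbits}, the curves inside a single orbit being isomorphic over the algebraically closed field $K=\overline{\QQ(t)}$ because they differ only by twists. The assertion to be proved is that no two of these fourteen curves are isogenous over $K$ except for the five coincidences recorded in items~(1)--(4): the isomorphisms of items~(1) and~(3), and the degree-$2$ isogenies of items~(2) and~(4). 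By Table~\ref{tab:orbitjs} and Lemma~\ref{rem:BaseCurve} each $j$-invariant is an explicit rational function of $t$, obtained by substituting $u_1=(1/2)(t-1/t)$ into the $Z_1$-formulas and $u_2=(\zeta/2)(t+1/t)$ into the $Z_2$-formulas; note that $u_1^2+u_2^2=-1$, which by Example~\ref{ex:11} is precisely why the two Orbit~$1$ curves agree.

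Next I would set down the tool. An isogeny between two of these curves over $K$ is defined over some finite extension $L=\overline{\QQ}(C)$ of $\QQ(\zeta)(t)$, where $C$ is a smooth curve equipped with a finite map to the $t$-line $\PP$. Isogenous elliptic curves over $L$ have isomorphic rational $\ell$-adic Tate modules as Galois modules, so one has potentially good reduction at a place of $L$ exactly when the other does; and potentially good reduction at a place lying over a point $P$ of $\PP$ is equivalent to the $j$-invariant being finite at $P$. Hence: if two of the fourteen curves are isogenous over $K$, their $j$-invariants have the same set of poles on $\PP$. Separately, the two Orbit~$2C$ curves have constant $j=1728$, so they are isotrivial and therefore isogenous to none of the other twelve curves (which all have non-constant $j$), while over $K$ they are of course isomorphic to each other --- this is item~(3).

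Then I would carry out the computation: for each of the fourteen curves, read off from its $j$-invariant the support of the pole divisor on $\PP$. Doing this carefully --- in particular tracking the substitution between $u_i$ and $t$ (for instance $u_1=\zeta$ is attained only at $t=\zeta$) and noticing that the Orbit~$2B$ and Orbit~$2C$ $j$-invariants stay finite at $u=\infty$, i.e.\ at $t\in\{0,\infty\}$ --- one finds nine pairwise distinct pole sets. The five that occur twice are $\{0,\infty,\pm 1,\pm\zeta\}$ (the two Orbit~$1$ curves), $\{\pm 1,\pm\zeta\}$ (the two Orbit~$2B$ curves), $\{0,\infty,\pm 1\}$ (the curves $E_1$ and Orbit~$2A$ of $Z_2$), $\{0,\infty,\pm\zeta\}$ (the curves $E_2$ and Orbit~$2A$ of $Z_1$), and $\emptyset$ (the two Orbit~$2C$ curves); the remaining four --- $\{0,\infty,\pm 1,\zeta\}$, $\{0,\infty,\pm 1,-\zeta\}$, $\{0,\infty,1,\pm\zeta\}$, $\{0,\infty,-1,\pm\zeta\}$ --- are the Orbit~$4A$ and Orbit~$4B$ curves of $Z_1$ and of $Z_2$, each occurring once. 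Since these nine pole sets are distinct, no two curves from different classes can be isogenous, which rules out every isogeny not already present in~(1)--(4); and within the five two-element classes the only isogenies among distinct curves are exactly those of~(1)--(4), which were established in the first part of the lemma. This proves the final statement.

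The hard part is really just the bookkeeping in the third step: the fourteen pole divisors must be computed after substituting each $u_i$ as a rational function of $t$, and a careless reading of the $u$-parametrized formulas --- which hide some of these coincidences and near-coincidences --- would get them wrong. Everything conceptual is off the shelf (isogeny-invariance of the potentially-multiplicative-reduction locus, read off as the poles of $j$, together with the isotriviality remark that disposes of the single constant-$j$ curve), so once the nine pole sets are in hand and seen to be distinct apart from the five forced coincidences, nothing remains to check.
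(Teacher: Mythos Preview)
Your argument for the ``Furthermore'' clause is correct and takes a genuinely different route from the paper's. The paper dispatches this part computationally: it specializes to $p=421$, $\zeta=29$, $t=19$, $c=1$ in $\FF_p$, checks that all the resulting elliptic curves are ordinary, invokes the fact that any geometric isogeny between ordinary curves over $\FF_p$ is already defined over $\FF_{p^{12}}$, and then simply compares Frobenius traces over $\FF_{p^{12}}$ to see that no further isogeny classes coincide. Your approach instead stays in characteristic zero and uses the isogeny-invariance of the locus of potentially multiplicative reduction --- equivalently, the pole support of $j$ on the $t$-line --- to separate the fourteen curves into nine classes; since those nine pole sets are pairwise distinct and match exactly the groupings forced by (1)--(4), no further isogenies can occur. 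Your route is more conceptual and needs no machine verification, at the cost of the bookkeeping you flag in tracking poles through the substitutions $u_i\leftrightarrow t$; the paper's route is a black-box check once the specialization is fixed, but relies on a computer calculation of traces over a degree-$12$ extension. Both are complete. Note that your proposal takes items (1)--(4) themselves as already established; the paper proves those by writing down the isomorphisms and $2$-isogenies explicitly in Legendre coordinates, which is routine.
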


The Lemma could also be rephrased in terms of Prym varieties using 
Proposition~\ref{PropPrymJacobianElliptic}.

\begin{proof}[Proof of Lemma~\ref{lem:family1}]
To avoid a proliferation of subscripts and to aid in visual comprehension of 
various formulas, in this proof we will write $t$ and $c$ for $t_1$ and $c_1$,
and we will write $T$ and $C$ for $t_2$ and $c_2$. More generally, we will use
lower case letters for variables associated with $Z_1$, and upper case letters
for variables associated with $Z_2$.

Since the isomorphism classes of the curves $Z_i$ and the elliptic curves $E_i$
only depend on the values of $c$ and $C$ up to squares, and since $cC$ is a
square by hypothesis, we may assume that  $C = c$.

By Table~\ref{tab:orbits}, the Orbit 1 curves for $Z_1$ and $Z_2$ can be written
as
\[ 
   y^2 = x(x-1)(x-\lambda) \text{\quad and\quad}
   Y^2 = X(X-1)(X-\Lambda),
\]
respectively, where $\lambda  = 4t^2/(t^2 + 1)^2$ and 
$\Lambda = 4T^2/(T^2 + 1)^2 = -4 t^2 / (t^2 - 1)^2$. Then an isomorphism between
the curves is given by
\[
    X = (x - \lambda)/(1 - \lambda) \text{\quad and\quad}
    Y = y \cdot (t^2 + 1)^3/(t^2 - 1)^3.
\]
This proves the first statement.

The Orbit 2B curves for $Z_1$ and $Z_2$ can be written as
\[ y^2 = c (t^2+1) \cdot x(x-1)(x-\lambda) \text{\quad and\quad}
   Y^2 = c (T^2+1) \cdot X(X-1)(X-\Lambda),
\]
respectively, where 
\[
\lambda  = \frac{2(t^2-1)}{(t + \zeta)^2} \text{\quad and\quad}
\Lambda  = \frac{2(T^2-1)}{(T + \zeta)^2} = \frac{2(t^2+1) }{ (t + 1)^2}.
\]
Then a degree-$2$ isogeny from the first curve to the second is given by
\begin{align*}
    X &= \frac{\zeta}{2(t+1)^2} \cdot
         \frac{((t + \zeta) x - (1 + \zeta)(t + 1))^2}{x - 1}\\
    Y &= y \cdot \frac{(t + \zeta) (\zeta - 1) }{ 4 (t + 1)^3}
           \cdot \frac{(t + \zeta)^2 x^2 - 2(t + \zeta)^2 x + (2t^2 - 2)}
                      {(x-1)^2}.
\end{align*}
This proves the second statement.

The Orbit 2C curves for $Z_1$ and $Z_2$ are both twists of $y^2 = x^3 - x$, by
$c(t^2+1)$ and by $c(T^2+1)$, respectively.  By hypothesis, $(t^2+1)(T^2+1)$ is
a square in $K$, so these two twists are isomorphic to one another. This proves
the third statement.

By Remark~\ref{rem:BaseCurve} and Table~\ref{tab:orbits}, we can write $E_1$ and
the Orbit 2A curve for $Z_2$ as 
\[ y^2 = c(t^2+1) \cdot x(x-1)(x-\lambda) \text{\quad and\quad}
   Y^2 = c(T^2+1) \cdot X(X-1)(X-\Lambda),
\]
respectively, where 
$\lambda = t^2$ and $\Lambda = 4\zeta T/(T + \zeta)^2 = 4t/(t+1)^2$.
Then a degree-$2$ isogeny from the first curve to the second is given by
\[
X = \frac{1}{(t+1)^2} \cdot \frac{(x + t)^2}{x} \text{\quad and\quad}
Y = y \cdot \frac{1}{(t + 1)^3} \cdot \frac{x^2 - t^2}{x^2} .
\]
This proves one case of the fourth statement. The proof of the other case is
similar.

Finally, we check that no other pairs of elliptic curves associated to $Z_1$ and
$Z_2$ are isogenous to one another when $K$ is the algebraic closure of $\QQ(t)$
and $t_1 = t$. As we noted earlier, two elliptic curves over a field are
connected by a cyclic $n$-isogeny over the algebraic closure if and only if
their $j$-invariants satisfy the $n$-th classical modular polynomial
$\Phi_n\in\ZZ[x,y]$. If this relation holds in $K$, then it will also hold when
we reduce modulo $p$ and specialize $t$ and $c$ to specific values for which the
resulting curves are nonsingular. We take $p = 421$, $\zeta = 29\in\FF_p$, 
$t=19\in\FF_p$, and $c = 1\in\FF_p$. Computing traces shows the elliptic curves
in question are all ordinary. It follows that any geometric isogeny between them
is defined already over $\FF_{p^{12}}$; see 
\cite[\S{5}, p. 251]{HoweNartRitzenthaler2009}. Thus we simply compute the 
traces of these elliptic curves over $\FF_{p^{12}}$ and observe that there are
no further matches.
\end{proof}

We noted in Section~\ref{ssec:nonsupporting} that replacing $t_1$ with any of
eight linear fractional expressions in $t_1$ will result in a curve isomorphic
to $Z_1$, but possibly with the labels on Orbits 2A and 2B swapped, and
similarly for Orbits 4A and 4B. If we take the relation $t_2 = \zeta t_1$ and
apply one of these transformations to $t_1$ and a possibly different one to
$t_2$, we will get another family of curves satisfying the conclusions of
Lemma~\ref{lem:family1}, possibly with the roles of various orbits swapped.
There are $64$ ways of applying these eight linear fractional transformations
separately to $t_1$ and $t_2$, but some of these will produce equivalent
relations; for example, replacing $t_1$ with $-t_1$ and $t_2$ with $-t_2$ fixes
the relation $t_2 = \zeta t_1$. In fact, we obtain only $16$ different families
in this way. When we multiply the $16$ corresponding relations together, we get
a relation that can be expressed in terms of $I_1$ and $I_2$, namely:
\begin{equation} \label{eq:family1}
    I_1^2 I_2 + I_1 I_2^2 - 3I_1 I_2 + 1 = 0.
\end{equation}

\begin{definition}
We say that two curves $Z_1$ and $Z_2$ with $D_4$-action are
\emph{in the first family} if their invariants satisfy~\eqref{eq:family1}.
\end{definition}

\begin{remark}
Equation \eqref{eq:family1} defines a genus-$0$ curve, which can be parametrized
as
\[ I_1 = \frac{-(1+z)^2}{2(1-z)}, \quad I_2 = \frac{-(1-z)^2}{2(1+z)}.\]
Under this parametrization, the involution swapping $I_1$ and $I_2$ corresponds
to ${z\leftrightarrow -z}$.
\end{remark}

Proposition~\ref{PropPrymJacobianElliptic} shows that each of the elliptic
curves that appears in Table~\ref{tab:orbits} as an isogeny factor of
$\Ztilde_i$ can also be viewed as a Prym variety $\Prym^H$ for a double cover of
$Z_i$ specified by an index-$2$ subgroup $H$ of $\Jac(Z_i)[2]$, and each such
subgroup $H$ is determined as the set of elements of $\Jac(Z_i)[2]$ that pair
trivially with a nonzero element $U\in\Jac(Z_i)[2]$. Lemma~\ref{lem:family1} 
could therefore be restated in terms of these Pryms. The labeling of these Pryms
via the elements $U$ has the same problem as the labeling of the orbits 
associated to the $Z_i$: The labels depend on which of the eight possible values
of $t_i$ we used to write down an equation for~$Z_i$. However, using Prym 
varieties we can state a variant of Lemma~\ref{lem:family1} whose hypotheses and
conclusions depends only on the isomorphism classes of the curves $Z_i$ and not
on the choices we made to write them down.

\begin{notation}
Let $Z$ be a genus-$2$ curve with $D_4$-action over a field $K$ of 
characteristic not~$2$. As we see from Table~\ref{tab:orbits}, there is a 
unique nonzero point $U$ of $\Jac(Z)[2](\Kbar)$ that is fixed by the action 
of~$D_4$. Let $H$ be the order-$2$ subgroup of $\Jac(Z)[2]$ generated by~$U$. 
In the notation of Definition~\ref{DpiH}, let $\Zhat = \Ztilde^H$, so that there
is a degree-$8$ cover $\Zhat\to Z$ and $\Zhat$ has genus~$9$.
\end{notation}

\begin{proposition} \label{prop:intrinsiccover}
Let $Z_1$ and $Z_2$ be curves with $D_4$-action over a field $K$ of 
characteristic not~$2$, and suppose $Z_1$ and $Z_2$ lie in the first family. If 
$\Jac(Z_1)$ and $\Jac(Z_2)$ are geometrically isogenous to one another, then 
$\Jac(\Zhat_1)$ and $\Jac(\Zhat_2)$ are geometrically isogenous to one another.
\end{proposition}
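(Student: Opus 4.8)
The plan is to decompose each $\Jac(\Zhat_i)$ into elliptic curves, identify precisely which curves of Table~\ref{tab:orbits} occur, and then match the factors using the isogenies supplied by Lemma~\ref{lem:family1}. Since the conclusion concerns \emph{geometric} isogeny, I would first replace $K$ by an algebraic closure; then all Weierstrass points of $Z_1$ and $Z_2$ are rational and the square conditions in Lemma~\ref{lem:family1} hold automatically. Write $E_i$ for the base elliptic curve of $Z_i$ as in~\eqref{EdefE}. By Lemma~\ref{lem:quot1} we have $\Jac(Z_i)\sim E_i^2$, so by uniqueness of the isogeny decomposition the hypothesis $\Jac(Z_1)\sim\Jac(Z_2)$ is equivalent to $E_1\sim E_2$.

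For the decomposition, fix a parameter $t_i$ presenting $Z_i$ in the form~\eqref{EQ:D4withWeierstrass}. Then $U_i=[W_\zeta-W_{-\zeta}]$ is the unique $D_4$-fixed nonzero point of $\Jac(Z_i)[2]$, so $\Zhat_i=\Ztilde_i^{H_i}$ with $H_i=\langle U_i\rangle$ of index $8$. Applying Proposition~\ref{prop:decomposecover} with $H=H_i$, and then Proposition~\ref{PropPrymJacobianElliptic} together with Definition~\ref{D:EllipticPryms} to each Prym factor, gives
\[
\Jac(\Zhat_i)\ \sim\ E_i^2\times\prod_{V}E_V,
\]
where $V$ runs over the seven nonzero elements of $\langle U_i\rangle^\perp\subseteq\Jac(Z_i)[2]$. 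By Lemma~\ref{lem:explicitweil}, an element $V=[W_a-W_b]$ pairs trivially with $U_i=[W_\zeta-W_{-\zeta}]$ exactly when $\{a,b\}$ is $\{\zeta,-\zeta\}$ or a two-element subset of $\{t_i,-t_i,1/t_i,-1/t_i\}$; comparing with Table~\ref{tab:orbits}, these seven elements are $U_i$ (giving Orbit~1) together with exactly two elements in each of Orbits 2A, 2B, and 2C, and within each of those orbits the two point labels give the same elliptic curve in Table~\ref{tab:orbits}. Writing $G_i$, $A_i$, $B_i$, and $C_i$ for the Orbit~1, 2A, 2B, and 2C curves attached to $Z_i$, this gives
\[
\Jac(\Zhat_i)\ \sim\ E_i^2\times G_i\times A_i^2\times B_i^2\times C_i^2,
\]
which matches the genus $9$ of $\Zhat_i$, since $2+1+2+2+2=9$.

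To match the factors, I would use that $Z_1$ and $Z_2$ lie in the first family: by its construction (the discussion preceding~\eqref{eq:family1}), the parameters may be chosen so that $t_2=\zeta t_1$. Such a choice may swap the labels $2A\leftrightarrow 2B$ (and $4A\leftrightarrow 4B$) for either curve, but the decomposition above is symmetric in $A_i$ and $B_i$ and has no $4A$ or $4B$ factor, so this causes no trouble. With $t_2=\zeta t_1$, Lemma~\ref{lem:family1} supplies an isomorphism $G_1\cong G_2$, an isomorphism $C_1\cong C_2$ (both Orbit~2C curves have $j$-invariant $1728$), a degree-$2$ isogeny $B_1\sim B_2$, and degree-$2$ isogenies $E_1\sim A_2$ and $E_2\sim A_1$. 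Combining the last two with $E_1\sim E_2$ yields $A_1\sim E_2\sim E_1\sim A_2$. Hence each elliptic factor of $\Jac(\Zhat_1)$ is geometrically isogenous to the corresponding factor of $\Jac(\Zhat_2)$, and multiplying these isogenies together gives $\Jac(\Zhat_1)\sim\Jac(\Zhat_2)$.

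The step I expect to be the main obstacle is the bridging for Orbit~2A: this is the only place where the full hypothesis $\Jac(Z_1)\sim\Jac(Z_2)$ — as opposed to mere membership in the first family — is actually used, since one must route the isogeny $A_1\sim A_2$ through the two base curves. The only other point requiring care is the Weil-pairing bookkeeping via Lemma~\ref{lem:explicitweil}, which pins down exactly which orbits appear in $\Jac(\Zhat)$ and with what multiplicity, so that the four isogeny statements of Lemma~\ref{lem:family1} are precisely what is needed to account for every elliptic factor.
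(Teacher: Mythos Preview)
Your proposal is correct and follows essentially the same line as the paper's proof: pass to the algebraic closure, choose parameters with $t_2=\zeta t_1$, decompose $\Jac(\Zhat_i)$ via Proposition~\ref{prop:decomposecover} and identify the Prym factors as the Orbit~1, 2A, 2B, 2C curves using the Weil pairing, then match factors by Lemma~\ref{lem:family1} together with $E_1\sim E_2$. You are in fact more explicit than the paper about the multiplicities in the decomposition and about the bridging $A_1\sim E_2\sim E_1\sim A_2$ needed for Orbit~2A, which the paper leaves implicit.
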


\begin{proof}
We may assume that $K$ is algebraically closed. Let $\zeta$ be a primitive
fourth root of unity in $K$. Since $Z_1$ and $Z_2$ are in the first family and
since $K$ is algebraically closed, we can choose values of $t_1$ and $t_2$ with 
$t_2 = \zeta t_1$ such that each $Z_i$ has a model as
in~\eqref{EQ:D4withWeierstrass} with $t = t_i$ and with $c = 1$. The hypotheses
of Lemma~\ref{lem:family1} are then satisfied. 

Since $\Jac(Z_1)$ and $\Jac(Z_2)$ are isogenous to one another and since
$\Jac(Z_i)\sim E_i^2$ for each~$i$, the elliptic curves $E_1$ and $E_2$ are
isogenous to one another. Lemma~\ref{lem:family1} then shows that the Orbit 1
curves for $Z_1$ and $Z_2$ are isogenous to one another, as are the Orbit 2A
curves, the Orbit 2B curves, and the Orbit 2C curves. 

Proposition~\ref{prop:decomposecover} shows that $\Jac(\Zhat_i)$ decomposes up 
to isogeny as the product of $E_i^2$ with the product of the $\Prym^{H'}$, where
$H'$ ranges over the index-$2$ subgroups of $\Jac(Z_i)[2]$ that contain the
subgroup $H = \langle U\rangle$, where $U = [W_\zeta-W_{-\zeta}]$. Taking duals
with respect to the Weil pairing, we see that the $H'$ are the index-$2$ 
subgroups such that $(H')^\perp\subseteq H^\perp$. This means that the 
$(H')^\perp$ are precisely the subgroups $\langle U'\rangle$, where $U'$ is a
nontrivial $2$-torsion point that pairs trivially with $U$. We see from 
Lemma~\ref{lem:explicitweil} and Table~\ref{tab:orbits} that these $U'$ are the
labels of Orbits 1, 2A, 2B, and 2C, so each $\Jac(\Zhat_i)$ is isogenous to
$E_i^2$ times the product of the elliptic curves in Orbits 1, 2A, 2B, and 2C.
Therefore, the $\Jac(\Zhat_i)$ are isogenous to one another.
\end{proof}

\begin{proposition} \label{prop:family1t}
Let $Z_1$ and $Z_2$ be curves with $D_4$-action given as in~\eqref{eq:zi} by
values of $t_i$ and $c_i$ such that $t_2 = \zeta t_1$ and such that $c_1 c_2$
and $(t_1^2+1)(t_2^2+1)$ are both squares. For $i=1,2$, let $E_i$ be the
elliptic curve given by~\eqref{eq:ei}. Suppose that $E_1$ and $E_2$ are
isogenous to one another, and that either of the following two conditions
holds\textup{:}
\begin{enumerate}
\item The curve 
      $y^2 =                     \zeta x(x-1)(x+2\zeta t_1/(t_1-\zeta)^2)$ is isogenous to either 
      \begin{align*}
      y^2 &=                     \zeta x(x-1)(x+2\zeta t_2/(t_2-\zeta)^2) \text{\quad or}\\
      y^2 &= \zeta c_2 (t_2^2+1) \cdot x(x-1)(x+2\zeta t_2/(t_2-\zeta)^2),
      \end{align*}
      and the curve 
      $y^2 =                    \zeta  x(x-1)(x-2\zeta t_1/(t_1+\zeta)^2)$ is isogenous to either 
      \begin{align*}
      y^2 &=                     \zeta x(x-1)(x-2\zeta t_2/(t_2+\zeta)^2) \text{\quad or}\\
      y^2 &= \zeta c_2 (t_2^2+1) \cdot x(x-1)(x-2\zeta t_2/(t_2+\zeta)^2).
      \end{align*}
\item The curve 
      $y^2 =                     \zeta x(x-1)(x+2\zeta t_1/(t_1-\zeta)^2)$ is isogenous to either 
      \begin{align*}
      y^2 &=                     \zeta x(x-1)(x-2\zeta t_2/(t_2+\zeta)^2) \text{\quad or}\\
      y^2 &= \zeta c_2 (t_2^2+1) \cdot x(x-1)(x-2\zeta t_2/(t_2+\zeta)^2),
      \end{align*}
      and the curve 
      $y^2 =                    \zeta  x(x-1)(x-2\zeta t_1/(t_1+\zeta)^2)$ is isogenous to either 
      \begin{align*}
      y^2 &=                    \zeta  x(x-1)(x+2\zeta t_2/(t_2-\zeta)^2) \text{\quad or}\\
      y^2 &= \zeta c_2 (t_2^2+1) \cdot x(x-1)(x+2\zeta t_2/(t_2-\zeta)^2).
      \end{align*}
\end{enumerate}
Then $Z_1$ and $Z_2$ are doubly isogenous.  
\end{proposition}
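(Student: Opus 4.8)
The plan is to check the two conditions in the definition of \emph{doubly isogenous} directly, taking the base point on each $Z_i$ to be the Weierstrass point $(\zeta,0)$, as in Remark~\ref{rmk:basepoint}, and writing $\sim$ for $K$-isogeny. Since $Z_i$ and all the elliptic curves attached to it depend on $c_i$ only modulo $K^{\times 2}$ and $c_1c_2$ is a square, we may assume $c_1=c_2$ and write $c$ for the common value; this does not affect the truth of conditions (1) and (2), whose curves change only by a square twist. The hypotheses on $t_1,t_2,c$ are now exactly those of Lemma~\ref{lem:family1}, so all of its explicit $K$-isogenies are at our disposal. First, Lemma~\ref{lem:quot1} gives $\Jac(Z_i)\sim E_i^2$ over $K$, so the assumption $E_1\sim E_2$ yields $\Jac(Z_1)\sim E_1^2\sim E_2^2\sim\Jac(Z_2)$ --- the first half of being doubly isogenous.

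For the second half, apply Corollary~\ref{cor:jacstr}: over $K$, $\Jac(\Ztilde_i)\sim E_i^2\times\prod_{U\neq 0}E_{U,i}$, a product of $E_i^2$ with the fifteen elliptic curves attached to the nonzero $2$-torsion of $Z_i$. I would group these fifteen curves by $D_4$-orbit using Table~\ref{tab:orbits}. Inside each of Orbits 1, 2A, 2B, 2C the curves $E_U$ share the same pair $(d,\lambda)$, hence are $K$-isomorphic, so each such orbit contributes one elliptic curve (with multiplicity $1$ for Orbit 1 and $2$ for the others). Lemma~\ref{lem:family1}, together with $E_1\sim E_2$, matches these contributions for $Z_1$ and $Z_2$ up to $K$-isogeny: the Orbit 1 curves are $K$-isomorphic by part (1), the Orbit 2C curves by part (3), the Orbit 2B curves are $2$-isogenous by part (2), and for Orbit 2A part (4) gives that the Orbit 2A curve of $Z_1$ is isogenous to $E_2$, which is isogenous to $E_1$, which is isogenous to the Orbit 2A curve of $Z_2$. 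Thus the $E^2$-parts and the Orbit $1,2A,2B,2C$ parts of $\Jac(\Ztilde_1)$ and $\Jac(\Ztilde_2)$ agree up to $K$-isogeny, and only Orbits 4A and 4B remain --- which is where conditions (1) and (2) come in.

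By Table~\ref{tab:orbits}, each of Orbits 4A and 4B contributes the squares of two elliptic curves: one of the curves $y^2=\zeta x(x-1)\bigl(x\pm 2\zeta t_i/(t_i\mp\zeta)^2\bigr)$ named in the proposition (from the two $2$-torsion points of the form $\{\zeta,\cdot\}$), together with its quadratic twist by $c(t_i^2+1)$ (from the two points $\{-\zeta,\cdot\}$). The key observation is that $c(t_1^2+1)\cdot c(t_2^2+1)=c^2(t_1^2+1)(t_2^2+1)$ is a square, so the twisting class $\delta\colonequals c(t_i^2+1)\in K^\times/K^{\times 2}$ is the same for $i=1$ and $i=2$. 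Since a $K$-isogeny of elliptic curves induces a $K$-isogeny of their $\delta$-twists, each disjunct of condition (1) suffices for its orbit: if the Orbit 4A curve of $Z_1$ is $K$-isogenous to the untwisted (resp.\ twisted) Orbit 4A curve of $Z_2$, then its $\delta$-twist is $K$-isogenous to the twisted (resp.\ untwisted) one, so the two Orbit 4A curves of $Z_1$ and the two of $Z_2$ coincide as a multiset of $K$-isogeny classes, and likewise for Orbit 4B via the second disjunct. Condition (2) instead matches the Orbit 4A curves of $Z_1$ with the Orbit 4B curves of $Z_2$ and vice versa, which still makes the combined contribution of Orbits 4A and 4B agree. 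In either case, assembling all the orbit matchings with the $K$-isogeny of Corollary~\ref{cor:jacstr} gives $\Jac(\Ztilde_1)\sim\Jac(\Ztilde_2)$ over $K$, so $Z_1$ and $Z_2$ are doubly isogenous.

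The main obstacle is exactly the Orbit 4A/4B step in the third paragraph: one must check that conditions (1) and (2), which mention only the ``$d=\zeta$'' curves for $Z_1$ and allow either twist for $Z_2$, really force the full four-element multiset of elliptic factors in each of Orbits 4A and 4B to agree up to $K$-isogeny; the observation that the twisting class $c(t_i^2+1)$ is independent of $i$ under the stated square hypotheses is what makes this go through. Everything else is a direct assembly of Lemmas~\ref{lem:quot1} and~\ref{lem:family1} and Corollary~\ref{cor:jacstr}, plus the elementary fact that quadratic twisting commutes with isogeny; no isogeny computation beyond those already in Lemma~\ref{lem:family1} is required.
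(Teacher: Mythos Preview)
Your proof is correct and follows essentially the same approach as the paper's: reduce $\Jac(Z_1)\sim\Jac(Z_2)$ to $E_1\sim E_2$, use Lemma~\ref{lem:family1} to match Orbits 1, 2A, 2B, 2C, and then observe that conditions (1)/(2) handle Orbits 4A/4B. You actually supply more detail than the paper does --- in particular, your explanation that the twisting class $c_i(t_i^2+1)$ is independent of $i$ (so an isogeny between one pair in an orbit forces an isogeny between the other pair) is exactly the point the paper compresses into the single sentence ``This is equivalent to conditions (1) and (2),'' and your route through $E_1\sim E_2$ for Orbit 2A makes explicit what the paper's ``similarly'' elides.
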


\begin{proof}
Since $\Jac(Z_i) \sim E_i^2$, the assumption that $E_1 \sim E_2$ implies that
$\Jac(Z_1)  \sim \Jac(Z_2)$. By Lemma~\ref{lem:family1}, the elliptic curves in
Orbit~1 for $Z_1$ are isogenous to those in Orbit 1 for $Z_2$, and similarly for
Orbits~2A and~2B. The Orbit 2C curves for $Z_1$ and $Z_2$ are isomorphic,
because each curve is the twist of $y^2 = x^3 - x$ determined by $c_i(t_i^2+1)$,
and the product of these two factors is a square. Therefore, in order for $Z_1$
and $Z_2$ to be doubly isogenous, it suffices that the elliptic curves in
Orbits~4A and 4B for $Z_1$ are, in some order, isogenous to the Orbit~4A and~4B
curves for $Z_2$. This is equivalent to conditions (1) and (2).
\end{proof}

\subsubsection{The second family}
In Table~\ref{tab:orbits}, the four elliptic curves in Orbit 4A are not
necessarily isomorphic to one another over the base field, because there are two
possible values for the factor $d$ that determines the twist of the curve. We
will refer to the first two curves (with $d = \zeta$) as the ``first pair'' of
Orbit 4A, and the last two curves (with $d = \zeta c (t^2 + 1)$) as the
``second pair.'' Likewise, we refer to the $d=\zeta$ curves in Orbit 4B as the
first pair of that orbit, and the $d = \zeta c (t^2 + 1)$ curves as the second
pair of that orbit.

As in the preceding subsection, for $i = 1$ and $i = 2$ we let $Z_i$ and $E_i$
be the curves given by~\eqref{eq:zi} and~\eqref{eq:ei}. We will consider the
following relation between $t_1$ and $t_2$: 
\begin{equation}
\label{Esecondfam}    
(t_1 - \zeta)^2 (t_2 - \zeta)^2 = -8 t_1 t_2.
\end{equation}
\goodbreak
\begin{lemma} \label{lem:family2}
Suppose \eqref{Esecondfam} holds. Then\textup{:}
\begin{enumerate}
\item the elliptic curve in Orbit 1 for $Z_1$ and the elliptic curve in Orbit~1
      for $Z_2$ are related by a degree-$2$ isogeny over $K$\textup{;}
\item if $c_1c_2(t_1^2+1)(t_2^2+1)\in K^{\times2}$, then the elliptic curves in 
      Orbit 2C for $Z_1$ are isomorphic over $K$ to the elliptic curves in 
      Orbit~2C for $Z_2$\textup{;}
\item if $c_1 t_1 (t_1^2 + 1) \in K^{\times2}$, then the elliptic curves in 
      Orbit 2A for $Z_{1}$ are isomorphic over $K$ to the first pair of elliptic
      curve in Orbit 4A of $Z_{2}$, and the elliptic curves in  Orbit 2B for 
      $Z_{1}$ are isomorphic over $K$ to the first pair of elliptic curves in 
      Orbit 4B of $Z_{2}$\textup{;}
\item if $c_1 c_2 t_1 (t_1^2 + 1) (t_2^2 + 1)\in K^{\times2}$, then the elliptic
      curves in  Orbit 2A for $Z_{1}$ are isomorphic over $K$ to the second pair
      of elliptic curve in Orbit 4A of $Z_{2}$, and the elliptic curves in 
      Orbit~2B for $Z_{1}$ are isomorphic over $K$ to the second pair of 
      elliptic curves in Orbit 4B of~$Z_{2}$\textup{;}
\item the statements obtained from \textup{(3)} and \textup{(4)} by
      interchanging the roles of $Z_1$ and $Z_2$ also hold. 
\end{enumerate}
Furthermore, if $K$ is the algebraic closure of the function field $\QQ(t)$ and
$t_1 = t$, then there are no other isogenies among the orbits associated to
$Z_1$ and~$Z_2$.
\end{lemma}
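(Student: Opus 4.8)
I would prove Lemma~\ref{lem:family2} in the same spirit as Lemma~\ref{lem:family1}: each claimed isogeny is exhibited by an explicit formula in $t_1$ and $t_2$ whose defining identity becomes valid once the relation~\eqref{Esecondfam} is imposed, and the final ``no other isogenies'' assertion is settled by a reduction modulo a prime. Throughout I would use Table~\ref{tab:orbits} and Lemma~\ref{rem:BaseCurve} to write every elliptic curve in Legendre form $y^2 = d\,x(x-1)(x-\lambda)$, and I would exploit the fact that $y^2 = d_1 x(x-1)(x-\lambda_1)$ and $y^2 = d_2 x(x-1)(x-\lambda_2)$ are $K$-isomorphic exactly when $\lambda_1$ and $\lambda_2$ are related by one of the six linear fractional transformations permuting Legendre parameters and the twisting factors $d_1,d_2$ agree, up to squares, after the corresponding adjustment.

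For part~(1), writing the Orbit~1 curves for $Z_1$ and $Z_2$ as $y^2 = x(x-1)(x-\lambda_i)$ with $\lambda_i = 4t_i^2/(t_i^2+1)^2$, the relation~\eqref{Esecondfam} is precisely the condition that makes the modular polynomial $\Phi_2$ vanish on the two $j$-invariants --- this is how the family was found in Section~\ref{ss:findingisogenies} --- so I would write down the corresponding degree-$2$ isogeny as a pair of rational functions in $t_1,t_2$ (analogous to the explicit isogenies in the proof of Lemma~\ref{lem:family1}) and verify the identity defining it using~\eqref{Esecondfam}. Part~(2) is immediate from Table~\ref{tab:orbits}: the Orbit~2C curves for $Z_1$ and $Z_2$ are the quadratic twists of $y^2 = x^3-x$ by $c_1(t_1^2+1)$ and $c_2(t_2^2+1)$, so they are $K$-isomorphic exactly when $c_1c_2(t_1^2+1)(t_2^2+1)\in K^{\times 2}$.

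For parts~(3)--(5), I would read off from Table~\ref{tab:orbits} the Legendre data $(d,\lambda)$ of the Orbit~2A and Orbit~2B curves for $Z_1$ and of the first and second pairs in Orbits~4A and~4B for $Z_2$, show that~\eqref{Esecondfam} forces the relevant $\lambda$-values to match up to a specific Legendre-parameter transformation, and then track the twisting factor $d$ through that transformation. The resulting square condition is $c_1 t_1(t_1^2+1)\in K^{\times 2}$ in the first-pair case~(3); the second-pair case~(4) differs only by the extra factor $c_2(t_2^2+1)$ carried by $d$ in Orbits~4A and~4B of $Z_2$, yielding $c_1c_2 t_1(t_1^2+1)(t_2^2+1)\in K^{\times 2}$. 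Part~(5) is then automatic because the relation~\eqref{Esecondfam} is symmetric in $t_1$ and $t_2$, so the computations of~(3) and~(4) apply verbatim with the roles of $Z_1$ and $Z_2$ interchanged. As in Lemma~\ref{lem:family1}, one could restate all of this in terms of the Prym varieties of Proposition~\ref{PropPrymJacobianElliptic}.

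Finally, for the statement that no further isogenies occur, I would argue exactly as in the last paragraph of the proof of Lemma~\ref{lem:family1}: take $K$ to be the algebraic closure of $\QQ(t)$, set $t_1=t$, and let $t_2$ be one of the two roots of the quadratic in $t_2$ obtained from~\eqref{Esecondfam}. Any geometric isogeny among the orbit curves would survive reduction modulo a suitable prime $p\equiv 3\bmod 4$ and specialization of $t$ and the $c_i$ to values for which all the curves remain nonsingular; choosing such a specialization for which moreover every orbit curve is ordinary, any geometric isogeny between them is already defined over a fixed finite extension of $\FF_p$ (see \cite[\S{5}, p.~251]{HoweNartRitzenthaler2009}), so it suffices to compute the traces of all the orbit curves over that extension and check that the only coincidences are the ones already listed. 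The main obstacle I anticipate is the twist bookkeeping in parts~(3)--(5): getting the precise square conditions exactly right requires carefully propagating the twisting factor through the Legendre-parameter transformation dictated by~\eqref{Esecondfam}. A secondary point of care is in the last step, where one must choose the prime and the specialization of $t$ generically enough that the conclusion over the algebraic closure of $\QQ(t)$ is faithfully reflected, bearing in mind that here $t_2$ is only quadratic over $\QQ(t)$.
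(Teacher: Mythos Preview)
Your approach is essentially the paper's: exhibit the isogenies by explicit Legendre-form manipulations driven by Table~\ref{tab:orbits}, then rule out extra isogenies by reducing modulo a prime and comparing traces. Two small points where the paper's proof is sharper. First, the paper isolates the only genuinely non-obvious step in the ``twist bookkeeping'' you anticipate: relation~\eqref{Esecondfam} forces $\zeta t_1 t_2$, $t_1(t_1^2-1)$, and $t_2(t_2^2-1)$ all to be squares in $K$ (the first because $-8\zeta=(2-2\zeta)^2$, the second by rewriting~\eqref{Esecondfam} as $t_1(t_1^2-1)=(1+\zeta)^2 t_1^2(t_2+\zeta)^2/(t_2-\zeta)^2$, the third by symmetry), and these hidden squares are exactly what make the $d$-twists line up in (3)--(5). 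Second, for the final step the paper specializes to $p=421\equiv 1\bmod 4$ (so that $\zeta\in\FF_p$), with $\zeta=29$, $t_1=19$, $t_2=204$, $c_1=c_2=1$, and compares traces over $\FF_{p^{12}}$; your choice $p\equiv 3\bmod 4$ would force you to work over $\FF_p(\zeta)=\FF_{p^2}$ rather than $\FF_p$, which is fine but slightly less convenient.
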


\begin{proof}
We leave the proof to the reader, because it is essentially the same as the 
proof of Lemma~\ref{lem:family1} and is mostly straightforward. The only 
non-obvious details involved in the proof of the numbered statements are that if
$t_1, t_2\in K$ satisfy~\eqref{Esecondfam}, then $\zeta t_1 t_2$ and 
$t_1(t_1^2-1)$ and $t_2(t_2^2-1)$ are all squares in~$K$. The first of these is
a square because of~\eqref{Esecondfam} and the fact that
$-8\zeta = (2-2\zeta)^2$; the second is a square because~\eqref{Esecondfam} can
be rewritten as 
\[
t_1(t_1^2 - 1) = (1 + \zeta)^2 t_1^2 (t_2 + \zeta)^2 / (t_2 - \zeta)^2;
\]
and the third is a square by symmetry.

The final statement can be proven by taking $p = 421$, $\zeta = 29\in\FF_p$,
$t_1 = 19\in \FF_p$, $t_2 = 204\in\FF_p$, and $c_1 = c_2 = 1\in\FF_p$, and
comparing traces over $\FF_{p^{12}}$ as in the end of the proof of
Lemma~\ref{lem:family1}
\end{proof}

\begin{proposition} \label{prop:family2t}
Let $Z_1$ and $Z_2$ be curves with $D_4$-action given as in~\eqref{eq:zi} by
values of $t_i$ that satisfy~\eqref{Esecondfam} and with 
$c_i = \zeta (t_i^2 + 1)$, and such that $t_2$ and $\zeta t_1$ are squares in
$K^\times$. For $i=1,2$, let $E_i$ be the elliptic curve given by~\eqref{eq:ei}.
Suppose that\textup{:}
\begin{enumerate}
\item $E_1$ and $E_2$ are isogenous to one another\textup{;}
\item the first pair of curves in Orbit 4A of $Z_1$ are isogenous to the second
      pair of curves in Orbit 4A for $Z_2$\textup{;} and 
\item the first pair of curves in Orbit 4B of $Z_1$ are isogenous to the second
      pair of curves in Orbit 4A for $Z_2$.
\end{enumerate}
Then $Z_1$ and $Z_2$ are doubly isogenous.
\end{proposition}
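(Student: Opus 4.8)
The plan is to verify directly the two defining conditions for $Z_1$ and $Z_2$ to be doubly isogenous, with respect to the base point $(\zeta,0)$ singled out in Remark~\ref{rmk:basepoint}: that $\Jac(Z_1)\sim\Jac(Z_2)$ over $K$, and that $\Jac(\Ztilde_1)\sim\Jac(\Ztilde_2)$ over~$K$. The first is immediate from hypothesis~(1): by Lemma~\ref{lem:quot1} and Remark~\ref{rem:BaseCurve} one has $\Jac(Z_i)\sim E_i^2$, so $E_1\sim E_2$ gives $\Jac(Z_1)\sim\Jac(Z_2)$.

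For the second condition I would invoke Corollary~\ref{cor:jacstr} to write each $\Jac(\Ztilde_i)$, up to isogeny, as $E_i^2$ times the product of the fifteen elliptic curves of Table~\ref{tab:orbits}. Reading off that table, the two curves in each of Orbits 2A, 2B and 2C coincide, and so do the two curves in each of the first and second pairs of Orbits 4A and 4B; thus the seventeen elliptic isogeny factors of $\Jac(\Ztilde_i)$ are $E_i$ (with multiplicity~$2$), the Orbit~1 curve, the Orbit 2A, 2B and 2C curves (each with multiplicity~$2$), and the first-pair and second-pair curves of Orbits 4A and 4B (each with multiplicity~$2$). The strategy is then to match these two seventeen-element multisets of elliptic curves across $i=1,2$ using Lemma~\ref{lem:family2} together with hypotheses~(1)--(3).

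The main work --- and the only place I expect real subtlety --- is the square-class bookkeeping needed to decide which conditional clauses of Lemma~\ref{lem:family2} we may invoke, given $c_i=\zeta(t_i^2+1)$, the assumption that $t_2$ and $\zeta t_1$ are squares, and relation~\eqref{Esecondfam}. Here one uses $\zeta^2=-1\in K^{\times2}$ repeatedly: the twist condition of Lemma~\ref{lem:family2}(2) holds because $c_1c_2(t_1^2+1)(t_2^2+1)=\bigl(\zeta(t_1^2+1)(t_2^2+1)\bigr)^2$; the condition of part~(3) holds because $c_1t_1(t_1^2+1)=\zeta t_1\cdot(t_1^2+1)^2$ and $\zeta t_1\in K^{\times2}$ by hypothesis; and the condition in the ``$Z_1\leftrightarrow Z_2$'' form of part~(4) holds because $c_1c_2t_2(t_1^2+1)(t_2^2+1)\equiv -t_2\equiv 1\pmod{K^{\times2}}$. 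One must be careful \emph{not} to use the ``$Z_1\leftrightarrow Z_2$'' form of part~(3): relation~\eqref{Esecondfam} gives $-2t_1t_2\in K^{\times2}$, whence $\zeta t_2\equiv 2\pmod{K^{\times2}}$, which need not be a square, so it is part~(4) rather than part~(3) that must be applied on the $Z_2$ side. With these clauses in hand, Lemma~\ref{lem:family2} supplies: the Orbit~1 curves of $Z_1$ and $Z_2$ are isogenous (part~1); the Orbit~2C curves of $Z_1$ and $Z_2$ are isomorphic (part~2); the Orbit 2A and 2B curves of $Z_1$ are isomorphic to the first-pair curves of Orbits 4A and 4B of $Z_2$ (part~3); and the Orbit 2A and 2B curves of $Z_2$ are isomorphic to the second-pair curves of Orbits 4A and 4B of $Z_1$ (part~4, with the roles of $Z_1$ and $Z_2$ interchanged).

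To finish, one checks that these identifications, together with $E_1\sim E_2$ and with hypotheses~(2) and~(3) --- which match the first-pair curves of Orbits 4A and 4B of $Z_1$ with the second-pair curves of Orbits 4A and 4B of $Z_2$ (the ``Orbit 4A for $Z_2$'' in hypothesis~(3) evidently being a slip for ``Orbit 4B for $Z_2$'') --- account for all seventeen elliptic factors of $\Jac(\Ztilde_1)$ and of $\Jac(\Ztilde_2)$. Therefore $\Jac(\Ztilde_1)\sim\Jac(\Ztilde_2)$, and combined with the first step this shows that $Z_1$ and $Z_2$ are doubly isogenous.
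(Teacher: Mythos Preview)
Your proposal is correct and follows essentially the same approach as the paper: verify the square-class hypotheses needed to invoke parts (1), (2), (3), and the $Z_1\leftrightarrow Z_2$ form of part (4) of Lemma~\ref{lem:family2}, and then match up all seventeen elliptic isogeny factors of $\Jac(\Ztilde_1)$ and $\Jac(\Ztilde_2)$ using these together with hypotheses (1)--(3). The paper organizes the final matching slightly differently --- it uses the observation that the first and second pairs of Orbit~4A (and~4B) for $Z_i$ are twists of one another by $c_i(t_i^2+1)$, and that these twist factors lie in the same square class, to deduce from hypotheses (2) and (3) that in fact the Orbit~2A (resp.~2B) curves for $Z_1$ and $Z_2$ are isogenous to each other --- but this is only a cosmetic repackaging of your direct multiset comparison.
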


\begin{remark} \label{rmk:propchoices}
This proposition follows from making choices that allow us to apply certain
statements from Lemma~\ref{lem:family2}; in particular, we make choices that
imply that $c_1 t_1 (t_1^2 + 1)$ and $c_1 c_2 t_2 (t_1^2 + 1) (t_2^2 + 1)$ are
squares. Other choices would lead to variations of
Proposition~\ref{prop:family2t}.
\end{remark}

\begin{proof}[Proof of Proposition~\textup{\ref{prop:family2t}}]
From Lemma~\ref{lem:family2}(1), the curves in Orbit 1 for $Z_1$ and $Z_2$ are
isogenous to one another. From Lemma~\ref{lem:family2}(3), the curves in Orbits
2A and 2B for $Z_1$ are isogenous to the first pairs of Orbits 4A and 4B for
$Z_2$, respectively, and from Lemma~\ref{lem:family2}(5) applied to (4), the 
curves in Orbits 2A and 2B for $Z_2$ are isogenous to the second pairs of 
Orbits~4A and 4B for $Z_1$, respectively. 

Since $\zeta c_1 (t_1^2 + 1)$ and $\zeta c_2 (t_2^2 + 1)$ are both squares, 
$c_1 (t_1^2 + 1)$ and $c_2 (t_2^2 + 1)$ are in the same square class in
$K^\times$. Since the first pair and second pairs of Orbits 4A for $Z_i$ are
twists of one another by $c_i (t_i^2 + 1)$, the hypothesis \textup{(2)} of the
proposition implies that the second pair of curves in Orbit 4A of $Z_1$ are
isogenous to the first pair of curves in Orbit 4A of $Z_2$. Thus, by the 
preceding paragraph, the curves in Orbit 2A for $Z_1$ are isogenous to the 
curves in Orbit 2A for $Z_2$. Similarly, the first pair and second pairs of
Orbits 4B for $Z_i$ are twists of one another by $c_i (t_i^2 + 1)$ and the same
argument shows that the curves in Orbit 2B for $Z_1$ are isogenous to the curves
in Orbit 2B for $Z_2$.

Again as $c_1 (t_1^2 + 1)$ and $c_2 (t_2^2 + 1)$ are in the same square class in
$K^\times$, Lemma~\ref{lem:family2}(2) implies that the curves in Orbit 2C for
$Z_1$ are isomorphic to the curves in Orbit 2C for $Z_2$. As $E_1\sim E_2$, we 
conclude that $Z_1$ and $Z_2$ are doubly isogenous.
\end{proof}

As in the preceding subsection, we can apply any of eight linear fractional
transformations to $t_1$ and to $t_2$ in the relation given
by~\eqref{Esecondfam} to get another family that satisfies a lemma similar to
Lemma~\ref{lem:family2}. Only four of these families are distinct. Multiplying
the polynomials defining these four families together, we find a relation that
can be expressed in terms of the invariants $I_1$ and $I_2$ of $Z_2$ and $Z_2$:
\begin{equation}
\label{eq:family2}
    I_1 I_2 = 16.
\end{equation}

\begin{definition}
We say that two curves $Z_1$ and $Z_2$ with $D_4$-action are
\emph{in the second family} if their invariants satisfy~\eqref{eq:family2}.
\end{definition}

\begin{remark}
Equation \eqref{eq:family2} defines a genus-$0$ curve, which can be 
parametrized as
\[ I_1 = \frac{4(1-z)}{1+z}, \quad I_2 = \frac{4(1+z)}{1-z}.\]
Under this parametrization, the involution swapping $I_1$ and $I_2$ corresponds
to ${z\leftrightarrow -z}$.
\end{remark}

\begin{remark}
Proposition~\ref{prop:intrinsiccover} gives an interpretation of the first
family that can be stated in terms of the isomorphism classes of the curves,
without reference to the choices of $t_1$ and $t_2$ that we make to write down 
the curves; this is possible because the orbits involved in 
Lemma~\ref{lem:family1} are exactly the orbits contained in the Prym variety of 
a cover that can be defined independently of the choices of $t_i$. There is no 
straightforward analog of Proposition~\ref{prop:intrinsiccover} for the second 
family.
\end{remark}

\subsubsection{The third and fourth families}
We found two further families of pairs of curves where there are more isogenies
between the associated elliptic curves than expected. They produce fewer doubly
isogenous curves than the preceding two families, so here we just summarize the
results. We also simplify the exposition by assuming in this section that the
base field $K$ is algebraically closed, so that we do not have to worry about
twists.

Let $Z_1$ and $Z_2$ be genus-$2$ curves with $D_4$-action and with invariants
$I_1$ and $I_2$. We say that $Z_1$ and $Z_2$ are \emph{in the third family} if
we have
\begin{equation} \label{eq:family3}
    I_1^2 I_2^2 - 3\cdot2^8 I_1 I_2 + 2^{12} I_1 +2^{12} I_2 = 0.
\end{equation}
If $Z_1$ and $Z_2$ are in the third family, then
\begin{itemize}
\item the curves in Orbit 1 for $Z_1$ and $Z_2$ are $4$-isogenous to one
      another;
\item the curves in either Orbit 4A or Orbit 4B for $Z_1$ are $2$-isogenous to 
      the curves in either Orbit 4A or Orbit 4B for $Z_2$.
\end{itemize}
Note that \eqref{eq:family3} defines a curve of genus~$0$, which can be 
parametrized by
\[ I_1 = -32 z (z + 1)/(z - 1)^2, \quad I_2 = -32 z (z - 1)/(z + 1)^2.\]
Under this parametrization, the involution swapping $I_1$ and $I_2$ corresponds
to $z\leftrightarrow -z$.

We say that $Z_1$ and $Z_2$ are \emph{in the fourth family} if their invariants
satisfy
\begin{equation}
    \label{eq:family4}
(I_2^2 + 2^4 I_1^2 I_2 - 3\cdot 2^4 I_1 I_2 + 2^8 I_1 ) 
(I_1^2 + 2^4 I_1 I_2^2 - 3\cdot 2^4 I_1 I_2 + 2^8 I_2 )
= 0.
\end{equation}
Suppose $I_1$ and $I_2$ satisfy the first factor in this expression. Then
\begin{itemize}
\item the curves in Orbit 1 for $Z_1$ and $Z_2$ are $2$-isogenous to one
      another;
\item one of the following holds: 
    \begin{itemize}
    \item the curve $E_1$ is isomorphic to the Orbit 4A curves for $Z_2$ and the
          Orbit 2A curves for $Z_1$ are $2$-isogenous to the Orbit 4B curves 
          for~$Z_2$;
    \item the curve $E_1$ is isomorphic to the Orbit 4B curves for $Z_2$ and the
          Orbit 2A curves for $Z_1$ are $2$-isogenous to the Orbit 4A curves 
          for~$Z_2$;
    \item the curve $E_1$ is $2$-isogenous to the Orbit 4A curves for $Z_2$ and 
          the Orbit 2B curves for $Z_1$ are $2$-isogenous to the Orbit 4B curves
          for~$Z_2$;
    \item the curve $E_1$ is $2$-isogenous to the Orbit 4B curves for $Z_2$ and
          the Orbit 2B curves for $Z_1$ are $2$-isogenous to the Orbit 4A curves
          for $Z_2$.
    \end{itemize}
\end{itemize}
If $I_1$ and $I_2$ satisfy the second factor in~\eqref{eq:family4}, then the
roles of $Z_1$ and $Z_2$ in the above list are reversed. Each factor 
in~\eqref{eq:family4} defines a curve of genus~$0$.

\subsection{Intersections of families}

Under mild restrictions on the field $K$, we will produce genus-$2$ curves $Z_1$
and $Z_2$ with $D_4$-action that are very close to being doubly isogenous. The 
invariants $I_1$ and $I_2$ of these curves are the two roots of 
$x^2 - (47/16)x + 16$. Since $I_{1} I_{2} = 16$, this pair of curves lies in the
second family; since 
\[
I_1^2 I_2 + I_1 I_2^2 - 3I_1 I_2 + 1 =  16 (I_1 + I_2)  - 47 = 0,
\] 
the pair also lies in the first family; and since
\[
I_1^2 I_2^2 - 3\cdot2^8 I_1 I_2 + 2^{12} I_1 +2^{12} I_2 
= 16^2 - 3\cdot 2^8 \cdot 16 + 2^{12}(47/16) = 0,
\]
the pair lies in the third family as well.

\begin{proposition}
\label{Prop:IntersectionFamilyIsogenousFactors}
Let $K$ be a field in which $-1$, $2$, and $-7$ are nonzero squares, and let 
$\zeta\in K$ satisfy $\zeta^2 = -1$. In $K$, let
\[
t_1 = \frac{( 1 + \zeta)(3 + \zeta\sqrt{-7})}{2} \text{\quad and \quad}
t_2 = \frac{(-1 + \zeta)(3 + \zeta\sqrt{-7})}{2}.
\]
For $i=1,2$, let $c_i=1$, and let $Z_i$ and $E_i$ be defined by~\eqref{eq:zi}
and~\eqref{eq:ei}. Then the invariants of $Z_1$ and $Z_2$ are the two roots of
$x^2 - (47/16)x + 16$, and if $E_1$ is isogenous to~$E_2$, the curves $Z_1$ and
$Z_2$ are doubly isogenous.
\end{proposition}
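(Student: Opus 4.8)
The plan is to exploit the fact, established just above, that the pair $(Z_1,Z_2)$ lies in the first, second, and third families simultaneously, and to combine the isogenies among the elliptic curves of Table~\ref{tab:orbits} that these memberships produce with the hypothesis $E_1\sim E_2$ and the decomposition of $\Jac(\Ztilde_i)$ from Corollary~\ref{cor:jacstr}.

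First I would record the invariants and the relations between $t_1$ and $t_2$. Writing $w=\sqrt{-7}$ and using $\zeta^2=-1$, one has $(1+\zeta)^2=2\zeta$ and $(3+\zeta w)^2=16+6\zeta w$, so $t_1^2=8\zeta-3w$, and since $t_2=\zeta t_1$ also $t_2^2=-8\zeta+3w$. Substituting into $s_i=-(t_i^4+1)/t_i^2$ and then $I_i=-(s_i-2)^2/(4(s_i+2))$ and simplifying gives $I_1+I_2=47/16$ and $I_1I_2=16$, so $I_1$ and $I_2$ are the two roots of $x^2-(47/16)x+16$, as claimed. Next, $t_2/t_1=(\zeta-1)/(\zeta+1)=\zeta$, which is the relation required by Lemma~\ref{lem:family1}; and, given $t_2=\zeta t_1$ and the identity $8\zeta=(2+2\zeta)^2$, relation~\eqref{Esecondfam} is equivalent (for admissible $t_1$) to $t_1^2-3(1+\zeta)t_1+\zeta=0$, which holds because $t_1^2+\zeta=9\zeta-3w=3(1+\zeta)t_1$. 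So both family relations hold at the level of $t$.

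The heart of the proof, and what I expect to be the main obstacle, is the bookkeeping of square classes, where the hypotheses on $K$ enter. With $c_1=c_2=1$, the condition $c_1c_2\in K^{\times 2}$ is automatic, and $(t_1^2+1)(t_2^2+1)=128+48\zeta w=8(3+\zeta w)^2$ is a square because $2\in K^{\times 2}$; this supplies the hypotheses of Lemma~\ref{lem:family1} and shows $t_1^2+1\equiv t_2^2+1$ in $K^\times/K^{\times 2}$. Moreover the identity $2t_1^2=\zeta(3+\zeta w)^2$ forces $\zeta=2t_1^2/(3+\zeta w)^2\in K^{\times 2}$. One then verifies, by a short chain of square-root extractions inside $\QQ(\sqrt{-1},\sqrt2,\sqrt{-7})$ (using that $-1$, $2$, $-7$, and now also $\zeta$, are squares in $K$), that $t_1(t_1^2+1)=(-21-9w)+(24-8w)\zeta$ is a square in $K$; and then $t_2(t_2^2+1)=\zeta\,t_1(t_1^2+1)\cdot\bigl((t_2^2+1)/(t_1^2+1)\bigr)$ is a product of squares, hence a square. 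These are exactly the square-class hypotheses needed for Lemma~\ref{lem:family2}(3) applied to $Z_1\to Z_2$ and for its mirror image obtained by interchanging $Z_1$ and $Z_2$.

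Finally I would assemble the conclusion. From $\Jac(Z_i)\sim E_i^2$ and the hypothesis $E_1\sim E_2$ we get $\Jac(Z_1)\sim\Jac(Z_2)$. Lemma~\ref{lem:family1} gives, over $K$: the Orbit~1 curves of $Z_1$ and $Z_2$ are isomorphic, the Orbit~2B curves are $2$-isogenous, the Orbit~2C curves are isomorphic, and $E_i$ is $2$-isogenous to the Orbit~2A curve of the other $Z_j$; combined with $E_1\sim E_2$, this last point shows the Orbit~2A curves of $Z_1$ and $Z_2$ are isogenous over $K$. Lemma~\ref{lem:family2}(3) and its mirror then identify, over $K$, the Orbit~2A curve of each $Z_i$ with the first pair of Orbit~4A curves of the other $Z_j$, and the Orbit~2B curve with the first pair of Orbit~4B curves; hence the first pairs of Orbit~4A (resp.\ Orbit~4B) curves of $Z_1$ and $Z_2$ are isogenous over $K$, and, twisting by the common square class of $t_1^2+1$ and $t_2^2+1$, so are the second pairs. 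Therefore every elliptic factor of $\Jac(\Ztilde_1)$ in Corollary~\ref{cor:jacstr} is isogenous over $K$ to the corresponding factor of $\Jac(\Ztilde_2)$, so $\Jac(\Ztilde_1)\sim\Jac(\Ztilde_2)$; together with $\Jac(Z_1)\sim\Jac(Z_2)$ this is exactly the statement that $Z_1$ and $Z_2$ are doubly isogenous. (Membership in the third family is not used in this argument, though it furnishes an additional $4$-isogeny between the Orbit~1 curves.)
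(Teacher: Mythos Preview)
Your proof is correct and follows essentially the same route as the paper: verify the invariants, check that $t_2=\zeta t_1$ together with the square-class conditions needed for Lemma~\ref{lem:family1}, check that relation~\eqref{Esecondfam} holds together with the square-class conditions needed for Lemma~\ref{lem:family2}(3) and its mirror, and then combine the resulting orbit-by-orbit isogenies with $E_1\sim E_2$ and Corollary~\ref{cor:jacstr}. The paper is terser in places (it writes down the explicit square roots $c_it_i(t_i^2+1)=\tfrac12(2+5\zeta-2\sqrt{-7}+\zeta\sqrt{-7})^2$ and its companion rather than arguing via $\zeta\in K^{\times 2}$, and it leaves the final assembly as ``straightforward''), while you spell out the matching of Orbits~4A and~4B more carefully; but the argument is the same, and your closing remark that the third family is not actually needed is also consistent with the paper's proof.
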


\begin{proof}
An easy computation verifies the statement about the invariants of $Z_1$ 
and~$Z_2$.

We check that $t_2 = \zeta t_1$ and that $(t_1^2 + 1)(t_2^2 + 1)$ is equal to
the square of ${\sqrt{2}^3 (3 + \zeta\sqrt{-7})}$. Since $c_1c_2 = 1$ is also a
square, the hypotheses of Lemma~\ref{lem:family1} are satisfied.

We also check that~\eqref{eq:family2} holds, and that 
\begin{align*}
c_1 t_1 (t_1^2 + 1) 
  &= \sqrt{2}^{-2}(2 + 5\zeta - 2\sqrt{-7} + \zeta\sqrt{-7})^2 
     \text{\quad and}\\
c_2 t_2 (t_2^2 + 1) 
  &= \sqrt{2}^{-2}(5 + 2\zeta - \sqrt{-7} + 2\zeta\sqrt{-7})^2.
\end{align*}
As we already noted, $(t_1^2 + 1)(t_2^2 + 1)$ is a square, so the hypotheses of 
statements (1),(2), and (3) of Lemma~\ref{lem:family2} hold, as does the
hypothesis of the variation of the lemma's statement (3) obtained by
interchanging the roles of $Z_1$ and $Z_2$.

Finally, we note that the first pair and the second pair of Orbit 4A for $Z_1$
are twists of one another by $c_1 (t_1^2 + 1)$, while the first pair and the
second pair of Orbit 4A for $Z_2$ are twists of one another by 
$c_2 (t_2^2 + 1)$; these two twisting factors lie in the same square class in
$K^\times$. The analogous statement holds for the first and second pairs of
Orbit 4B for $Z_1$ and for $Z_2$.

Combining the conclusions of Lemma~\ref{lem:family1} and Lemma~\ref{lem:family2}
with this last observation, it is straightforward to verify that $Z_1$ and $Z_2$
are doubly isogenous.
\end{proof}

\begin{remark}
It is easy to check that the only pair $(I_1,I_2)$ of nonzero elements of $K$
that satisfies~\eqref{eq:family1} and~\eqref{eq:family2} is the pair from
Proposition~\ref{Prop:IntersectionFamilyIsogenousFactors}. This pair is also the
only pair to satisfy both~\eqref{eq:family2} and~\eqref{eq:family3}. There are
other pairs that satisfy the defining equations of more than one of the four
families, but in characteristic $0$ the curves with those invariants do not have
as many isogeny factors in common as the curves in 
Proposition~\ref{Prop:IntersectionFamilyIsogenousFactors}.
\end{remark}

\begin{remark}
As we noted in Remark~\ref{rem:nonisomorphic}, if two curves over a finite 
field are Galois conjugates of one another, they are necessarily doubly
isogenous. We check that the values of $s_i$ (see Equation~\eqref{Estformula}) 
for the two curves in Proposition~\ref{Prop:IntersectionFamilyIsogenousFactors}
are $s_1 = 6\sqrt{-7}$ and $s_2 = -6\sqrt{-7}$. Thus, if $K$ is finite and $-7$
is not a square in its prime field, the curves in the proposition are
automatically doubly isogenous for an unsurprising reason.
\end{remark}

\begin{example}
In $K = \FF_{113}$, take $\zeta = 15$ and $\sqrt{-7} = 28$, and apply
Proposition~\ref{Prop:IntersectionFamilyIsogenousFactors}. We find that 
$t_1 = 107$ and $t_2 = 23$, that $s_1 = 55$ and $s_2= 58$, and that the elliptic
curves $E_1$ and $E_2$ both have trace $6$, so that $E_1 \sim E_2$. This gives
an example where the curves in 
Proposition~\ref{Prop:IntersectionFamilyIsogenousFactors} are not Galois 
conjugates of one another, but are doubly isogenous.
\end{example}

\begin{remark}
Let $E_1$ be the elliptic curve $y^2 = (x + 1)(x^2 + 6\sqrt{-7}x + 1)$ over the 
field $K = \QQ(\sqrt{-7})$ and let $E_2$ be its conjugate over $\QQ$. If
$\frakp$ is a prime of $K$ such that the reductions of $E_1$ and $E_2$ modulo
$\frakp$ are isogenous, then over an extension of the residue field of~$\frakp$,
the two curves $y^2 = (x^2 + 1)(x^4 \pm 6\sqrt{-7} x^2 + 1)$ will be doubly
isogenous. By \cite[Theorem~1.1]{Charles2018}, there are infinitely many such
primes. However, the two curves we obtain from such a prime will be conjugate to
one another --- and hence, will give an uninteresting example --- exactly when
$\frakp$ is a prime of degree~$2$. 

This leads to the following question: Are there infinitely many degree-$1$ 
primes $\frakp$ of $\QQ(\sqrt{-7})$ such that the reductions of $E_1$ and $E_2$
modulo $\frakp$ are isogenous? We suspect that the answer is yes, because if
$\frakp$ lies over $p$ then heuristically there is roughly one chance out of
$\sqrt{p}$ that they will be isogenous by chance, and the sum of $1/\sqrt{p}$
diverges. Unfortunately, we do not know how to prove that there are infinitely
many such primes.
\end{remark}

\section{Heuristics for the families of coincidences}
\label{ss:heuristicsfamily}

In the preceding section, we identified four families of pairs $(Z_1,Z_2)$ of 
genus-$2$ curves with $D_4$-action where there are unexpected isogenies among a 
number of the elliptic curves that appear in the decomposition (up to isogeny)
of the Jacobians $\Jac(\Ztilde_1)$ and $\Jac(\Ztilde_2)$. In this section, we
formulate heuristics for the expected number of doubly isogenous pairs over a
finite field that occur in these families.

\subsection{Counting doubly isogenous pairs in families}

First, we introduce notation to keep track of the number of doubly isogenous
pairs in each family; see also Definition~\ref{def:delta}.

\begin{definition}
\label{def:deltan}
We consider unordered pairs $\{Z_1, Z_2\}$ of doubly isogenous curves 
over~$\Fq$, where $Z_1$ and $Z_2$ are genus-$2$ curves with $D_4$-action and all
Weierstrass points rational, and where $Z_1$ and $Z_2$ are not Galois conjugates
of one another. For $n = 1, 2, 3, 4$, let $\delta_n(q)$ be the number of these
pairs $\{Z_1,Z_2\}$ that lie in the $n$th family. Let $\delta_0(q)$ be the
number of these pairs $\{Z_1,Z_2\}$ that do \emph{not} lie in any of these four
families. Let $\delta_{1,2,3}(q)$ be the number of these pairs $\{Z_1,Z_2\}$
that are simultaneously in families $1$, $2$, and $3$ --- that is, the pairs
whose invariants are the two roots of $x^2 - (47/16)x + 16$.
\end{definition}

\begin{remark}
For $n > 0$, each $\delta_n(q)$ counts doubly isogenous pairs whose invariants
satisfy one of the four equations~\eqref{eq:family1}, \eqref{eq:family2},
\eqref{eq:family3}, or \eqref{eq:family4}. We do not demand that the doubly
isogenous curves come from values of $t$ that are related to one another as in,
for example, Lemma~\ref{lem:family1} or  Lemma~\ref{lem:family2}.
\end{remark}

We saw in Section~\ref{sec:heuristicdoublenaive} that Na\"{\i}ve
Heuristic~\ref{FH:DoublyIsogenous} did not seem to reflect the data that we had
collected. Here we present another heuristic which better reflects the data. In 
each family, let $m$ be the number of pairs of elliptic curves required to be
isogenous to ensure double isogeny of two curves in that family; for example,
Proposition~\ref{prop:family1t} shows that $m=3$ for family 1. Then we model the
double isogeny class of a curve in that family as a $m$-tuple of independent
elliptic curves. Given this, as in Section~\ref{ss:initialheuristic}, one can
compute the expected number of doubly isogenous pairs in the given families
under this heuristic. By abuse of notation, we will denote this by
$\EE(\delta_i(q))$, even though for fixed $q$, the integer $\delta_i(q)$ is a
fixed value and not a random variable.

\begin{heuristic}
\label{H:families}
The following are reasonable estimates for the ``expected value'' of
$\delta_n(q)$ for prime powers $q\equiv 1\bmod 4$\textup{:}
\begin{alignat*}{6}
                         \EE(\delta_0(q))       &\asymp 1/q, 
&                 \qquad \EE(\delta_1(q))       &\asymp 1/\sqrt{q}, 
&                        \EE(\delta_2(q))       &\asymp 1/\sqrt{q},\\
                         \EE(\delta_3(q))       &\asymp 1/\sqrt{q}, 
&                        \EE(\delta_4(q))       &\asymp 1/q^{3/2},  
& \quad \text{and}\quad  \EE(\delta_{1,2,3}(q)) &\asymp 1/\sqrt{q}.
\end{alignat*}
Combining these values, we expect that $\EE(\delta(q))\asymp 1/\sqrt{q}$.
\end{heuristic}

\begin{proof}[Justification]
First we consider $\delta_0(q)$. If we assume that there are no families of
unexpected coincidences other than the four presented in
Section~\ref{ssec:families}, then the justification of Na\"{\i}ve
Heuristic~\ref{FH:DoublyIsogenous} applies to the pairs $\{Z_1,Z_2\}$ that are
not in these four families; this suggests that the expected value of
$\delta_0(q)$ is $\Theta(1/q)$.

Next we consider $\delta_1(q)$, and in particular we look at the pairs of 
curves $(Z_1,Z_2)$ which can be written with $t_2 = \zeta t_1$ and $c_2 = c_1$. 
There are roughly $q$ such ordered pairs. By Proposition~\ref{prop:family1t}, in
order for such a pair to be doubly isogenous, it is sufficient for three
coincidences to hold: $E_1$ and $E_2$ should be isogenous, and one of the two
pairs of isogenies in items (1) and (2) of Proposition~\ref{prop:family1t}
should hold. We model each of these coincidences as asking that two random
elliptic curves lie in the same isogeny class, which happens with probability
$\Theta(1/\sqrt{q})$. Thus, we expect to find on the order of 
$q / (\sqrt{q})^3 = 1/\sqrt{q}$ doubly isogenous pairs with $t_2 = \zeta t_1$
and $c_2 = c_1$.

There are other relations between $t_1$ and $t_2$ that lead to
Equation~\eqref{eq:family1} holding, and again we expect $\Theta(1/\sqrt{q})$
doubly isogenous pairs that satisfy the relation. Thus, in total, we expect 
$\Theta(1/\sqrt{q})$ pairs of doubly isogenous pairs in the first family.

For $\delta_2(q)$ the argument is similar. By Proposition~\ref{prop:family2t},
if each of three pairs of elliptic curves are isogenous to one another, the
curves $Z_1$ and $Z_2$ in the proposition are doubly isogenous. Again modeling
these isogeny class collisions as occurring with probability
$\Theta(1/\sqrt{q})$, we find that we expect $\Theta(1/\sqrt{q})$ pairs of
curves $(Z_1,Z_2)$ coming from pairs of values $(t_1,t_2)$
satisfying~\eqref{Esecondfam}. 

As for the first family, there are other relations between $t_1$ and $t_2$ that
lead to Equation~\eqref{eq:family2} holding. For each such relation we again
expect $\Theta(1/\sqrt{q})$ doubly isogenous pairs that satisfy the relation.
Again, in total, we expect  $\Theta(1/\sqrt{q})$ pairs of doubly isogenous pairs
in the second family.

We skip over the third family for the moment, for reasons that will become
apparent.

For pairs $\{Z_1,Z_2\}$ in the fourth family, we need five coincidental 
isogenies in order for the curves to be doubly isogenous. This suggests that 
the expected number of such curves is $\Theta(q/q^{5/2}) = \Theta(1/q^{3/2})$.

Proposition~\ref{Prop:IntersectionFamilyIsogenousFactors} suggests that when
$-7$ is a square in $\Fq$ there is one chance out of $\sqrt{q}$ that the two
values of $t$ in the proposition give rise to a doubly isogenous pair over 
$\Fq$ that lies in the first, second, and third families. When $-7$ is a square 
in the prime field the curves in this pair are not Galois conjugates of one
another and so contribute to the value of $\delta_{1,2,3}(q)$. For other pairs
of curves whose invariants are the roots of $x^2 - (47/16)x + 1$ and that are
not Galois conjugates of one another, the likelihood of being doubly isogenous
is less than this, so in total the expected value of $\delta_{1,2,3}(q)$ is 
$\Theta(1/\sqrt{q})$.

For pairs $\{Z_1,Z_2\}$ in the third family, if we argue as above we see that we
need four coincidental isogenies in order for the curves to be doubly isogenous.
This suggests that the expected number of such curves is 
$\Theta(q/q^{4/2}) = \Theta(1/q)$. But once again our na\"{\i}ve analysis needs
revision, because clearly $\delta_3(q)\ge\delta_{1,2,3}(q)$. Thus, we take the
expected value of $\delta_3(q)$ to be $\Theta(1/\sqrt{q})$.
\end{proof}

\subsection{Comparison with data} \label{ss:comparisondata}
For $n=15,\ldots, 23$, we considered the $1024$ primes $q$ with 
$q \equiv 1 \bmod 4$ closest to $2^n$. For each such $q$, we found all 
unordered pairs $\{Z_1,Z_{2}\}$ of non-conjugate curves over $\Fq$ with
$D_4$-action and with all Weierstrass points rational for which $Z_1$ and $Z_2$
are doubly isogenous.  A given pair may appear in more than one family.
Table~\ref{Table:families} shows which of these pairs are explained by one 
(or more) of the families. 

%

\begin{table}[ht]
\centering
\begin{tabular}{rccc@{}r@{\quad}r@{\quad}r@{\quad}rc}
\toprule
    &                           & In a   & Not in a &    &    &    &    &           \\
$n$ & \quad Total\hbox to 1em{} & Family & Family   & \qquad F1 & F2 & F3 & F4 & (F1 $\cap$ F2 $\cap$ F3)\\
\midrule
15 &    820 &    586 &      234 & 366 & 222 & \pz 62 & \pz 20 &    38 \\
16 &    580 &    494 &   \pz 86 & 286 & 198 &     34 &      0 &    12 \\
17 &    407 &    318 &   \pz 89 & 192 & 138 &     24 &      0 &    18 \\
18 &    282 &    238 &   \pz 44 & 148 &  96 &     14 &      0 &    10 \\
19 &    218 &    196 &   \pz 22 & 116 &  90 &     10 &      2 &    10 \\
20 &    138 &    132 & \pz\pz 6 &  78 &  58 &      4 &      0 & \pz 4 \\
21 &    100 & \pz 90 &   \pz 10 &  54 &  40 &      4 &      0 & \pz 4 \\
22 & \pz 58 & \pz 58 & \pz\pz 0 &  40 &  16 &      2 &      0 & \pz 0 \\
23 & \pz 42 & \pz 40 & \pz\pz 2 &  20 &  20 &      0 &      0 & \pz 0 \\
 \bottomrule
\end{tabular}

\vskip 2ex
\caption{Data for doubly isogenous curves. For each $n$, column 2 contains the
total number of (unordered) pairs of doubly isogenous curves over $\Fq$ for the
$1024$ primes $q\equiv 1\bmod 4$ closest to $2^n$. The $3$rd (resp.\ $4$th)
column contains the number of these in (resp.\ not in) at least one family. The
remaining columns contain the number for each family.}
\label{Table:families}
\end{table}

As predicted by Heuristic~\ref{H:families}, increasing $n$ by \emph{two} appears
to roughly halve the total number of pairs, as well as the pairs in family $1$
or family $2$ (and possibly in family $3$ and in the intersection of the first
three families, although it is harder to tell because the numbers are smaller).
In contrast, increasing $n$ by \emph{one} appears to roughly halve the number of
pairs coming from no family. This is as expected as 
$\Theta(q^{-1/2}) = \Theta( 2^{-n/2})$ and $\Theta(q^{-1}) = \Theta(2^{-n})$ for
primes $q$ near $2^n$. The numbers for the fourth family drop off too rapidly to
easily determine the rate of decline, but our heuristics do at least predict
that the fourth family will decrease the fastest.

\subsubsection*{Thanks\textup{:}}
This work was supported by a grant from the Simons Foundation (546235) for the
   collaboration `Arithmetic Geometry, Number Theory, and Computation', through 
   a workshop held at ICERM.
Booher was partially supported by the Marsden Fund Council administered by the
   Royal Society of New Zealand. 
Li was partially funded by the Simons collaboration on `Arithmetic Geometry, 
   Number Theory, and Computation'.
Pries was partially supported by NSF grant DMS-19-01819.  
Springer was partially supported by National Science Foundation Awards 
   CNS-2001470 and CNS-1617802.
   
We thank Bjorn Poonen and Felipe Voloch for helpful conversations.


\bibliography{references}
\bibliographystyle{hplaindoi}

\end{document}